\def\<{\langle}
\def\>{\rangle}
\def\eps{\varepsilon}
\def\RR{\mathbb{R}}
\def\calf{\mathcal{F}}
\def\tr{\operatorname{Tr\,}}
\def\id{\operatorname{id\,}}
\def\Div{\operatorname{div}}
\def\Ric{\operatorname{Ric}}
\def\vol{\operatorname{vol}}
\newcommand{\Sm}{{\mbox{\rm S}}}
\newcommand\const{\operatorname{const}}
\def\eq{\hspace*{-1.5mm}&=&\hspace*{-1.5mm}}
\def\plus{\hspace*{-1.5mm}&+&\hspace*{-1.5mm}}
\def\minus{\hspace*{-1.5mm}&-&\hspace*{-1.5mm}}
\def\dt{\partial_t}
\newtheorem{thm}{Theorem}[section]
\newtheorem{cor}[thm]{Corollary}
\newtheorem{lem}[thm]{Lemma}
\newtheorem{prop}[thm]{Proposition}
\newtheorem{example}[thm]{Example}
\newtheorem{defn}{Definition}[section]
\newtheorem{rem}{Remark}[section]
\title{The Einstein-Hilbert type action \\ on foliated pseudo-Riemannian manifolds}
\author{
       Vladimir Rovenski\footnote{Mathematical Department, University of Haifa, Mount Carmel, Haifa, 31905, Israel
       \newline e-mail: {\tt rovenski@math.haifa.ac.il}}\ \
        \ and \
       Tomasz Zawadzki
       \footnotemark[1]
       \footnote{Katedra Geometrii, Uniwersytet \L\'{o}dzki, ul. Banacha 22, 90-238 \L\'{o}d\'{z}, Poland
        \newline e-mail: {\tt zawadzki@math.uni.lodz.pl}}
       }
\begin{document}

\date{}

\maketitle

\begin{abstract}

We develop variation formulas on almost-product (e.g. foliated) pseudo-Rieman\-nian manifolds,
and we consider variations of metric preserving orthogonality of the distributions.
These formulae are applied to Einstein-Hilbert type actions:
the total mixed scalar curvature and the total extrinsic scalar curvature of a distribution.
The obtained Euler-Lagrange equations admit a number of solutions, e.g., twisted products, conformal submersions and isoparametric foliations. The paper generalizes recent results about the actions on codimension-one foliations for the case of arbitrary (co)dimension.
\end{abstract}

\vskip 4mm\noindent
\textbf{Keywords}:
almost-product manifold; foliation; pseudo-Riemannian metric;
adapted variation;
mixed scalar curvature;
second fundamental form;
isoparametric; conformal submersion

\vskip1mm\noindent
\textbf{MSC (2010)} {\small Primary 53C12; Secondary 53C44.}

\section*{Introduction}\label{sec:intro}


Minimizing geometric quantities has been studied  for a
long time: recall, for example, isoperimetric inequalities and estimates of total curvature of submanifolds.
In the context of foliations and distributions, Gluck and Ziller \cite{gz1986}
studied the problem of minimizing functions like volume
defined for $k$-plane fields on a mani\-fold.
 In~all the cases mentioned above, they  consider a fixed Riemannian metric
and look for geometric objects (submanifolds, foliations) minimizing geometric
quantities defined usually as integrals of curvatures of different~types.

The following approach to problems in geometry of codimension-one foliations
is presented in \cite{rw-m}: given a foliated manifold and a property $Q$ of a submanifold,
depending on the principal curvatures of the leaves, study Riemannian metrics,
which minimize the integral of $Q$ in the class of variations of metrics, such that
the unit vector field orthogonal to the leaves is the same for all metrics of the variation family.
Geometric objects, as higher mean curvatures and scalar curvature type quantities,
have been exploited in order to embody complementary orthogonal distributions
into a theory aiming to find critical metrics for various actions.
Certainly (like in some of the cases mentioned before) such Riemannian structures may not exist, but if they do, they usually have interesting geometric properties and applications.

The~gravitational part of the Hilbert action is $J: g \mapsto \int_M \Sm(g)\,{\rm d}\vol_g$,
where $g$ is a metric of index~$1$, and $\Sm(g)$ is the {scalar curvature} of the spacetime~$(M,g)$.
The~integral is taken over $M$ if it converges;
otherwise,
one integrates over an arbitrarily large, relatively compact domain $\Omega$ in $M$, and it still provides the Einstein~equations.

Our objective is to develop variation formulas for the quantities of extrinsic geometry
for \textit{adapted} \textit{variations} of metrics on almost-product (e.g. foliated) pseudo-Riemannian manifolds,
and to apply them to study the Einstein-Hilbert type actions, see \eqref{E-Jmix} and \eqref{Jwidetildeex}.
These functionals are defined like the classical Einstein-Hilbert action,
the difference being the fact that the scalar curvature is replaced by the {mixed scalar curvature}
(i.e., an averaged mixed sectional curvature)
or the {extrinsic scalar curvature} of a non-degenerate distribution or foliation
-- the quantities which have been examined by several geometers, see \cite{bf,wa1} and bibliographies therein.
\textit{Adapted} \textit{variations} that we consider generalize the approach of \cite{rw-m}, to vary the metric in a way that preserves the almost-product structure of the manifold.
 We~deduce the Euler--Lagrange equations for an almost-product manifold and characterize the critical metrics in several classes of foliated manifolds. 
 The mixed Einstein-Hilbert action for a globally hyperbolic space\-time $(M^4,\,g)$ has been studied in~\cite{bdrs},
where the  Euler-Lagrange equa\-tions (called the mixed gravitational field equations) were derived
and their solutions for an empty space have been examined.
As~we shall see shortly,
the Euler-Lagrange equations for the Einstein-Hilbert type action involve several new tensors and
a new type of Ricci curvature (introduced in~\cite{r2010}, and studied
in \cite{bdr} for foliated closed Riemannian manifolds), whose properties need to be further investigated.

Our approach is based on variation formulas for the extrinsic geometry of foliations and almost-product manifolds
-- the quantities which can be expressed using configuration tensors
(i.e., the integrability tensor and the second fundamental form).
The paper develops methods of~\cite{rw-m}, where the variation formulas and functionals were studied
for codimension-one foliations; our main result in this case (Euler-Lagrange equations in Section~\ref{subsec:codim1fol}) coincides with an analogue of Einstein equations in \cite{bdrs}.
Our research poses open problems for further study, e.g. stability conditions of the action,
and the geometry of critical metrics with respect to adapted
variations of~metric.
Although adapted variations (of metric) preserve the orthogonal complement of a given distribution,
note that, unlike $\Sm_{\rm mix}$, the extrinsic scalar curvature does not depend expli\-citly on this complement. Therefore, in further work we shall also consider general variations more appropriate to this case.

The paper contains an introduction and two sections.
Section~\ref{sec:mixed-action} develops variation formulas for the quantities of extrinsic geometry
for {adapted} {variations} of metrics on almost-product (e.g. foliated) pseudo-Riemannian manifolds,
and applies them to study the total mixed scalar curvature 
and the total extrinsic scalar curvature of a distribution -- analogues of the classical Einstein-Hilbert action.
Its main goal are the Euler-Lagrange equations
for two types of adapted variations of metrics, the second of which preserves the volume of a domain $\Omega$ (and yields an analogue of Einstein equations with the cosmological constant).
Section~\ref{sec:main} is devoted to applications to foliated manifolds
including flows,
codimension-one foliations and conformal submersions with totally umbilical fibers.
We~
give examples (e.g. twisted products and isoparametric foliations) with sufficient conditions for critical metrics.

Throughout the paper everything (manifolds, distributions,
etc.) is assumed to be smooth
(i.e., $C^{\infty}$-differentiable) and oriented.
Following \cite{bf,O'Neill}, and in view of expected applications in theoretical physics,
we consider pseudo-Riemannian metrics.

\section{Einstein-Hilbert type action on almost-product manifolds}\label{sec:mixed-action}

A pseudo-Riemannian metric of index $q$ on $M$ is an element $g\in{\rm Sym}^2(M)$
 such that each $g_x\ (x\in M)$ is a \textit{non-degenerate bilinear form of index} $q$ on the tangent space $T_xM$.
When~$q=0$, i.e., $g_x$ is positive definite,
$g$ is a Riemannian metric (resp. a Lorentz metric~when $q=1$).
At~a point $x\in M$, a 2-dimensional linear subspace $X\wedge Y$ (called a plane section) of $T_xM$
is \textit{non-degenerate} if $W(X,Y):=g(X,X)\,g(Y,Y)-g(X,Y)\,g(X,Y)\ne0$.
For such section at $x$, the \textit{sectional curvature} is the number $K(X,Y)=g(R(X,Y)X, Y)/W(X,Y)$.
Here $R(X,Y)=\nabla_Y\nabla_X-\nabla_X\nabla_Y+\nabla_{[X,Y]}$ is the~curvature tensor
of the Levi-Civita connection $\nabla$ of $g$.

 The so called musical isomorphisms $\sharp$ and $\flat$ will be used for $(k,l)$-tensors, which form the
vector spaces $T^k_l M$ over $\RR$ and modules over $C^\infty(M)$.
For~example, if $\omega \in T^1_0 M$ is a 1-form and $X \in {\mathfrak X}_M,$ then
$\omega(Y)=g(\omega^\sharp,Y)$ and $X^\flat(Y) =g(X,Y)$ for any $Y\in {\mathfrak X}_M$.
For $(0,2)$-tensors $A$ and $B$ we have
 $\<A, B\> =\tr_g(A^\sharp B^\sharp)=\<A^\sharp, B^\sharp\>$.

\subsection{Preliminaries}\label{subsec:mixedEH}

A subbundle $\widetilde{\cal D}\subset TM$ (called a distribution) is non-degenerate,
if $\widetilde{\cal D}_x$ is a non-degenerate subspace of
$(T_x M,\, g_x)$ for every $x\in M$; in this case,
its complementary orthogonal distribution ${\cal D}$
(i.e., $\widetilde{\cal D}_x\cap\,{\cal D}_x=0$ and $\widetilde{\cal D}_x\oplus\,{\cal D}_x=T_xM$ for any $x\in M$)
is also non-degenerate.
Thus, we are entitled to consider a connected manifold $M^{n+p}$ with a pseudo-Riemannian metric $g$ and a pair
of complementary orthogonal non-degene\-rate distributions $\widetilde{\cal D}$ and ${\cal D}$ of ranks
$\dim_{\,\RR}\widetilde{\cal D}_x=n$ and $\dim_{\,\RR}{\cal D}_x=p$ for every $x\in M$
(called an \textit{almost-product structure} on $M$):
\begin{equation}\label{e:Decomposition}
 TM = \widetilde{\cal D} \oplus {\cal D}.
\end{equation}
The~following convention is adopted for the range of~indices:
\[
 a,b,\ldots{\in}\{1\ldots n\},\quad i,j,\ldots{\in}\{1\ldots p\}.
\]
 The~sectional curvature $K(X, Y)\ (X\in\widetilde{\cal D},\ Y\in{\cal D})$ is called {mixed}.
The function on~$M$,
\begin{equation}\label{eq-wal2}
 \Sm_{\rm mix} =\sum\nolimits_{\,a,i} K(E_a, {\cal E}_{i})
 =\sum\nolimits_{\,a,i}\epsilon_a \epsilon_i\,g(R(E_a, {\cal E}_{i})E_a,\, {\cal E}_{i})\,,
\end{equation}
where
 $\{E_a\subset\widetilde{\cal D},\,{\cal E}_{i}\subset{\cal D}\}$ is a local orthonormal frame
 and $\epsilon_i=g({\cal E}_{i},{\cal E}_{i}),\ \epsilon_a=g(E_a,E_a)$,
is the \textit{mixed scalar curvature}, see~\cite{wa1}.
 If~a distribution is spanned by a unit vector field $N$, i.e., $g(N,N)=\epsilon_N\in\{-1,1\}$,
 then $\Sm_{\rm mix} = \epsilon_N\Ric_{N}$, where $\Ric_{N}$ is the Ricci curvature in $N$-direction.
For surfaces foliated by curves, $\Sm_{\rm mix}$ is the Gaussian curvature.

Let $\mathfrak{X}_M$ (resp. $\mathfrak{X}_{\cal D}$ and $\mathfrak{X}_{\widetilde{\cal D}}$)
be the module over $C^\infty(M)$ of all vector fields on $M$ (resp. sections of ${\cal D}$ and $\widetilde{\cal D}$).
For every $X\in{\mathfrak X}_M$, let $\widetilde{X} \equiv X^\top$ be the $\widetilde{\cal D}$-component of $X$  (resp. $X^\perp$ the  ${\cal D}$-component of $X$) with respect to the
decomposition (\ref{e:Decomposition}).
Let ${\rm Sym}^2(M)$ be the space of all symmetric $(0,2)$-tensors tangent to~$M$.
A~tensor ${B} \in {\rm Sym}^2(M)$ is said to be {\em adapted} if ${B}(X^\top, Y^\perp)=0$ for any $X,Y\in{\mathfrak X}_M$.
Let~${\mathfrak M}\equiv{\mathfrak M}(\widetilde{\cal D},\,{\cal D})$ consist of all adapted symmetric tensors on $(M,\widetilde{\cal D},{\cal D})$.

 We study pseudo-Riemannian structures on a manifold $M$, minimizing the functional
\begin{equation}\label{E-Jmix}
 J_{\rm mix,\Omega}(g) : g \mapsto \int_{\Omega} \Sm_{\rm mix}(g)\, {\rm d} \vol_g
\end{equation}
for variations $g_t\ (g_0=g,\ |t|<\eps)$ preserving orthogonality of $\widetilde{\cal D}$ and ${\cal D}$,
i.e.,
\[
 g_t\in{\rm Riem}(M,\,\widetilde{\cal D},\,{\cal D}) := {\rm Riem}(M) \cap {\mathfrak M},
\]
where  ${\rm Riem}(M)$ is the subspace of pseudo-Riemannian metrics of given signature.
In all the paper, $\Omega$ in (\ref{E-Jmix}) is a relatively compact domain of $M$, containing supports of variations $g_t$.
Let ${\mathfrak M}_{\widetilde{\cal D}}$ and ${\mathfrak M}_{\cal D}$ be, respectively, the spaces of
symmetric $(0,2)$-tensors with the property ${B}(X^\perp,Y)=0$ (resp. ${B}({X}^\top,Y)=0$) for any $X,Y\in{\mathfrak X}_M$.
Then
\begin{equation}\label{e:Decomposition2}
 {\mathfrak M} = {\mathfrak M}_{\widetilde{\cal D}} \oplus {\mathfrak M}_{\cal D}\,,
\end{equation}
the decomposition is orthogonal with respect to the inner product $g^\ast$ induced on $\mathfrak M$
by a $g \in {\rm Riem}(M,\,\widetilde{\cal D},\, {\cal D})$.
For each $(0,2)$-tensor ${B}$ tangent to $M$ we define its
components $\widetilde{{B}}, \, {B}^\bot \in \Gamma(T^\ast M \otimes T^\ast M)$ by setting
$\widetilde {{B}}(X,Y) = {B}(X^\top,Y^\top)$ and ${B}^\perp(X,Y) = {B}(X^\perp, Y^\perp)$.
If~${B}\in{\rm Sym}^2(M)$ then ${B}\in{\mathfrak M} \Longleftrightarrow {B} = {B}^\bot + \widetilde{{B}}$,
see \eqref{e:Decomposition2}.
 In~particular, $g = g^\perp + \tilde{g}$ for any $g\in{\rm Riem}(M,\,\widetilde{\cal D},\,{\cal D})$.
 Note that if ${B}\in{\mathfrak M}$ then $\widetilde{\cal D}$ and ${\cal D}$ are ${B}^\sharp$-invariant.

Our purpose is to compute the directional derivatives
\begin{equation}\label{E-DJ}
 D_g J_{\rm mix,\Omega} : T_g\,{\rm Riem}(M,\,\widetilde{\cal D},\,{\cal D}) \equiv {\mathfrak M}\ \to\ {\mathbb R}
\end{equation}
for any $g \in {\rm Riem}(M,\,\widetilde{\cal D},\,{\cal D})$ on almost-product
or foliated manifolds $(M,\,\cal D,\,\widetilde{\cal D})$ and study the
curvature and the geometry of
$(M, g)$,
where $g$ is a critical point of $J_{\rm mix,\Omega}$ with respect to adapted variations
supported in $\Omega$. Certainly, we can restrict ourselves to the cases
$D_g J_{\rm mix,\Omega}:{\mathfrak M}_{\cal D}\to{\mathbb R}$
or $D_g J_{\rm mix,\Omega}:{\mathfrak M}_{\widetilde{\cal D}}\to{\mathbb R}$,
when $g$ is critical either for $g^\perp$-variations,
i.e., $D_g J_{\rm mix,\Omega}({B})=0$ for every ${B}\in{\mathfrak M}_{\cal D}$, or for $\tilde g$-variations,
i.e., $D_g J_{\rm mix,\Omega}({B})=0$ for every ${B}\in{\mathfrak M}_{\widetilde{\cal D}}$.

\smallskip

 We define several tensors for one of distributions, and introduce
similar tensors for the second distribution using $\,\widetilde{}\,$ notation.
 The~symmetric $(0,2)$-tensor ${r}_{\,\cal D}$, given~by
\begin{equation}\label{E-Rictop2}
 {r}_{{\cal D}}(X,Y) = \sum\nolimits_{a} \epsilon_a\,g(R(E_a, \, X^\perp)E_a, \, Y^\perp), \qquad  X,Y\in {\mathfrak X}_M,
\end{equation}
is referred to as the \textit{partial Ricci tensor} for $\cal D$.
In particular, by \eqref{eq-wal2},
\begin{equation}\label{E-Ric-Sm}
 \tr_{g}{r}_{\,\cal D}=\Sm_{\rm mix}.
\end{equation}
Note that the \textit{partial Ricci curvature} in the direction of
a unit vector $X\in{\cal D}$ is the ``mean value" of sectional curvatures over all mixed planes containing $X$.

Let~$T, h:\widetilde{\cal D}\times \widetilde{\cal D}\to{\cal D}$
be the integrability tensor and the second fundamental form
of $\widetilde{\cal D}$.
\begin{eqnarray*}
  T(X,Y)=(1/2)\,[X,\,Y]^\perp,\quad h(X,Y) \eq (1/2)\,(\nabla_X Y+\nabla_Y X)^\perp.
\end{eqnarray*}
Using the local orthonormal frame
$\{E_i,\,{\cal E}_a\}_{i\le p,\,a\le n}$, one may find the formulae
\begin{equation*}
 \<h,h\>=\!\sum\nolimits_{\,a,b}\epsilon_a\epsilon_b\,g(h({E}_a,{E}_b),h({E}_a,{E}_b)),\quad
 \<T,T\>=\!\sum\nolimits_{\,a,b}\epsilon_a\epsilon_b\,g(T({E}_a,{E}_b),T({E}_a,{E}_b)).
\end{equation*}
The mean curvature vector of $\widetilde{\cal D}$
is
 $H=\tr_{g} h=\sum\nolimits_a\epsilon_a h(E_a,E_a)$.
 The~distribution $\widetilde{\cal D}$
is called \textit{totally umbilical}, \textit{harmonic}, or \textit{totally geodesic}, if
 $h=\frac1nH\,\tilde g,\ H =0$, or $h=0$, respectively.


The Weingarten operator $A_Z$ of $\widetilde{\cal D}$ with respect to $Z\in{\cal D}$,
and the operator $T^\sharp_Z$ are defined~by
\[
 g(A_Z(X),Y)= g(h(X,Y),Z),\qquad g(T^\sharp_Z(X),Y)=g(T(X,Y),Z).
\]
The Divergence Theorem states that $\int_{M} (\Div\xi)\,{\rm d}\vol_g =0$, when $M$ is closed;
this is also true if $M$ is open and $\xi\in{\mathfrak X}_M$ is supported in a relatively compact domain
$\Omega\subset M$.
The~${\cal D}^\bot$-\textit{divergence} of $\xi$ is defined by
$\Div^\perp\xi=\sum\nolimits_{i} \epsilon_i\,g(\nabla_{i}\,\xi, {\cal E}_i)$.
 Thus, the divergence of $\xi$ is
\[
 \Div\xi=\tr(\nabla \xi) = \Div^\perp\xi + \widetilde{\Div}\,\xi.
\]
Recall that for a vector field $X\in{\mathfrak X}_{\cal D}$ and for the gradient and Laplacian of
$f\in C^2(M)$ we~have
\begin{eqnarray}\label{E-divN}
 {\Div}^\bot X \eq \Div X+g(X,\,H),\\
\nonumber
 g(\nabla f, X) \eq X(f),\quad  \Delta\,f =\Div(\,\nabla f).
\end{eqnarray}
Indeed, using $H=\sum\nolimits_{\,a\le n} \epsilon_a\,h(E_{a}, E_{a})$
and $g(X,\,E_{a})=0$, one derives~(\ref{E-divN}):
\begin{equation*}
 \Div X-{\Div}^\bot X =\sum\nolimits_{a} \epsilon_a\,g(\nabla_{E_{a}} X,\,E_{a})
 =-\sum\nolimits_{a}\epsilon_a\,g(h(E_{a}, E_{a}), X) = -g(X,\,H).
\end{equation*}
For a
$(1,2)$-tensor $P$ define a $(0,2)$-tensor
${\Div}^\bot P$ by
 $({\Div}^\bot P)(X,Y) = \sum\nolimits_i \epsilon_i\,g((\nabla_i\,P)(X,Y), {\cal E}_i)$.
Then the divergence of $P$ is $(\Div\,P)(X,Y) = \widetilde{\Div}\,P +{\Div}^\bot P$.
For a~${\cal D}$-valued $P$, similarly to \eqref{E-divN}, we have
$\sum\nolimits_a \epsilon_a\,g((\nabla_a\,P)(X,Y), E_a)=-g(P(X,Y), H)$ and
\begin{eqnarray}\label{E-divP}
 {\Div}^\bot P = \Div P+\<P,\,H\>\,,
\end{eqnarray}
where $\<P,\,H\>(X,Y):=g(P(X,Y),\,H)$ is a $(0,2)$-tensor.
For example,
 $\Div^\perp h = \Div h+\<h,\,H\>$.

To study the partial Ricci curvature (e.g. in Proposition~\ref{L-CC-riccati}) we introduce several tensors.

\begin{defn}\rm
 The ${\cal D}$-deformation ${\rm Def}_{\cal D}\,H$ of $H$ is the symmetric part of $\nabla H$
restricted to~${\cal D}$,
\[
 2\,{\rm Def}_{\cal D}\,H(X,Y)=g(\nabla_X H, Y) +g(\nabla_Y H, X),\quad X,Y\in {\mathfrak X}_{\cal D}.
\]
One may identify the antisymmetric part of $\nabla H$ restricted to~${\cal D}$,
regarded as a 2-form $d_{\cal D}\,H$,
\[
 2\,d_{\cal D}\,H(X,Y)=g(\nabla_X H, Y)-g(\nabla_Y H, X),\quad X,Y\in{\mathfrak X}_{\cal D}.
\]
Define self-adjoint $(1,1)$-tensors:
${\cal A}:=\sum\nolimits_{\,i}\epsilon_i A_{i}^2\,$
(called the \textit{Casorati operator} of ${\cal D}$) and
 ${\cal T}:=\sum\nolimits_{\,i}\epsilon_i(T_{i}^\sharp)^2$.
Define the symmetric $(0,2)$-tensor $\Psi$ by the identity
\begin{eqnarray*}
 \Psi(X,Y) \eq \tr (A_Y A_X+T^\sharp_Y T^\sharp_X),
 \quad X,Y\in{\mathfrak X}_{\cal D}\,.
\end{eqnarray*}
\end{defn}

 A.\,Gray~\cite{g1967} calculated the curvatures of the distributions $\widetilde{\cal D},\,{\cal D}$ from the curvature of $M$,
using configuration tensors.
These are analogues of the second fundamental form of a submanifold.
We~shall say that the \textit{extrinsic geometry of an almost-product structure} describes the properties,
which can be expressed using the configuration tensors.

\begin{prop}[see~\cite{bdr}]\label{L-CC-riccati}
Let $g\in{\rm Riem}(M,\,\widetilde{\cal D},\,{\cal D})$. Then the following identities hold:
\begin{eqnarray}\label{E-genRicN}
\nonumber
 r_{{\cal D}} \eq \Div\tilde h +\<\tilde h,\,\tilde H\>
  -\widetilde{\cal A}^\flat -\widetilde{\cal T}^\flat
 -\Psi +{\rm Def}_{\cal D}\,H\,,\\
  d_{\cal D}\,H  \eq -\,\widetilde{\Div}\,\tilde T +\sum\nolimits_a\epsilon_a
 \big(\tilde A_{a}\tilde T^\sharp_{a} +\tilde T^\sharp_{a}\tilde A_{a}\big)^\flat.
\end{eqnarray}
\end{prop}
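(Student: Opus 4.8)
The plan is to derive both identities from a single computation of the bilinear form
\[
 B(X,Y):=\sum\nolimits_a\epsilon_a\,g(R(E_a,X)E_a,Y),\qquad X,Y\in{\mathfrak X}_{\cal D},
\]
where $\{E_a\}$ is a local orthonormal frame of $\widetilde{\cal D}$. By \eqref{E-Rictop2} one has $B=r_{\cal D}$, and the pair symmetry $g(R(E_a,X)E_a,Y)=g(R(E_a,Y)E_a,X)$ shows that $B$ is symmetric. The direct expansion of $B$, however, is not manifestly symmetric: writing $B=B^{\rm sym}+B^{\rm skew}$, the equality $B^{\rm sym}=r_{\cal D}$ will produce the first identity, while $B^{\rm skew}=0$ will produce the second one (the $d_{\cal D}H$ identity).

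First I would substitute
\[
 R(E_a,X)E_a=\nabla_X\nabla_{E_a}E_a-\nabla_{E_a}\nabla_XE_a+\nabla_{[E_a,X]}E_a
\]
and decompose every covariant derivative along $TM=\widetilde{\cal D}\oplus{\cal D}$. For $U,V\in{\mathfrak X}_{\widetilde{\cal D}}$ and $Z,W\in{\mathfrak X}_{\cal D}$ the structure equations of the almost-product structure read
\[
 (\nabla_UV)^\perp=h(U,V)+T(U,V),\quad (\nabla_ZW)^\top=\tilde h(Z,W)+\tilde T(Z,W),
\]
together with $(\nabla_UZ)^\top=-A_ZU-T^\sharp_ZU$ and $(\nabla_ZU)^\perp=-\tilde A_UZ-\tilde T^\sharp_UZ$, the remaining components being the connections induced in $\widetilde{\cal D}$ and ${\cal D}$. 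Pairing $R(E_a,X)E_a$ with $Y\in{\mathfrak X}_{\cal D}$ and summing over $a$ turns these into the tensors appearing in the statement.

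Then I would sort the terms. Since $\sum_a\epsilon_a\nabla_{E_a}E_a=H+\sum_a\epsilon_a(\nabla_{E_a}E_a)^\top$, the first summand of $B$ contributes $g(\nabla_XH,Y)$ plus a first-order term in $\tilde h$; choosing $\{E_a\}$ adapted at the point of evaluation (so that the $\widetilde{\cal D}$-intrinsic derivatives vanish there), the first-order pieces assemble into $\widetilde{\Div}\,\tilde h$, which by the $\widetilde{\cal D}$-analogue of \eqref{E-divP} equals $\Div\tilde h+\<\tilde h,\tilde H\>$. The quadratic terms built from the configuration operators $A_X,T^\sharp_X$ of $\widetilde{\cal D}$ collect into $-\Psi$, while those built from $\tilde A_a,\tilde T^\sharp_a$ of ${\cal D}$ collect into $-\widetilde{\cal A}^\flat-\widetilde{\cal T}^\flat$. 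Writing $g(\nabla_XH,Y)={\rm Def}_{\cal D}H(X,Y)+d_{\cal D}H(X,Y)$ and taking the symmetric part of the whole expansion reproduces the first identity, since $r_{\cal D}$, $\Div\tilde h$, $\<\tilde h,\tilde H\>$, $\widetilde{\cal A}^\flat$, $\widetilde{\cal T}^\flat$, $\Psi$ and ${\rm Def}_{\cal D}H$ are all symmetric. The antisymmetric part then has to vanish: it equals $d_{\cal D}H$ plus the skew contribution of the integrability tensor of ${\cal D}$, namely $\widetilde{\Div}\,\tilde T-\sum_a\epsilon_a(\tilde A_a\tilde T^\sharp_a+\tilde T^\sharp_a\tilde A_a)^\flat$, and setting it to zero gives the second identity. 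As a check, tracing the first identity over ${\cal D}$ and using \eqref{E-Ric-Sm} should return the known expression for $\Sm_{\rm mix}$ in terms of the configuration tensors, cf.\ \cite{wa1}.

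The main obstacle is the bookkeeping of the frame-dependent intrinsic-connection terms: one must verify that, after summation over the orthonormal frame, they reorganize into the global operators $\widetilde{\Div}\,\tilde h$ and $\widetilde{\Div}\,\tilde T$, independently of the chosen adapted frame. Tracking the signs $\epsilon_a$ of the pseudo-Riemannian frame through every pairing, and matching the quadratic Weingarten and integrability contributions to the precise combinations $\Psi$, $\widetilde{\cal A}^\flat+\widetilde{\cal T}^\flat$ and the symmetrized product $\tilde A_a\tilde T^\sharp_a+\tilde T^\sharp_a\tilde A_a$, is the delicate part; these are exactly A.~Gray's almost-product curvature identities \cite{g1967} rewritten in the present notation and extended to arbitrary index.
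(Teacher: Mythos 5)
The paper itself gives no proof of Proposition~\ref{L-CC-riccati}: it is quoted from \cite{bdr}, so there is no in-text argument to compare against. Judged on its own terms, your outline follows what is in fact the standard derivation (and the one used in \cite{bdr}): expand $r_{\cal D}(X,Y)=\sum_a\epsilon_a\,g(R(E_a,X)E_a,Y)$ via the definition of $R$, decompose all covariant derivatives along $TM=\widetilde{\cal D}\oplus{\cal D}$ using the configuration tensors, and observe that since the pair symmetry forces $r_{\cal D}$ to be symmetric while the raw expansion is not manifestly so, the symmetric part yields \eqref{E-genRicN}$_1$ and the vanishing of the antisymmetric part yields \eqref{E-genRicN}$_2$. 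This is the right key idea, and the sanity checks are consistent: the operators $\widetilde{\cal A}$, $\widetilde{\cal T}$, $\Psi$, ${\rm Def}_{\cal D}H$, $\Div\tilde h$, $\<\tilde h,\tilde H\>$ are all symmetric, while $d_{\cal D}H$, $\widetilde{\Div}\,\tilde T$ and $\sum_a\epsilon_a(\tilde A_a\tilde T^\sharp_a+\tilde T^\sharp_a\tilde A_a)^\flat$ are all antisymmetric (the last because $\tilde A_a$ is self-adjoint and $\tilde T^\sharp_a$ skew-adjoint), so each identity lives in the correct symmetry class. Your identification $\widetilde{\Div}\,\tilde h=\Div\tilde h+\<\tilde h,\tilde H\>$ as the dual of \eqref{E-divP} is correct, and the trace check against \eqref{eq-ran-ex} does come out right.

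That said, the proposal is a plan rather than a proof: every coefficient and sign in \eqref{E-genRicN} is asserted, not derived, and the entire content of the proposition lives in exactly the bookkeeping you defer (which second-order term produces $\widetilde{\Div}\,\tilde h$ versus $\widetilde{\Div}\,\tilde T$, which cross terms produce $-\Psi$ versus $-\widetilde{\cal A}^\flat-\widetilde{\cal T}^\flat$, and the $\epsilon_a$ signs in the pseudo-Riemannian setting). Two small cautions for when you carry it out. First, the term you call ``a first-order term in $\tilde h$'' coming from $W=\sum_a\epsilon_a(\nabla_{E_a}E_a)^\top$ is actually algebraic, $-g(W,\tilde h(X,Y)+\tilde T(X,Y))$, and is killed outright by the frame normalization; be sure the normalization you impose (vanishing of the $\widetilde{\cal D}$-intrinsic derivatives of the frame at the point, in all directions of $T_xM$ that you actually differentiate along) is achievable and suffices for every residual frame-dependent term, not just this one. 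Second, the trace check only constrains the ${\cal D}$-trace of the identity; it cannot detect an error in a trace-free combination such as mixing up $\Psi$ with $\widetilde{\cal A}^\flat$-type terms, so it should not be leaned on as verification of the tensorial identity itself.
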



The
\textit{extrinsic
curvature} of $\widetilde{\cal D}$,
\[
 R_{\,\rm ex}(X,\,Y,\,Z,\,W)=
 g(h(X^\top,Z^\top),h(Y^\top,W^\top))-g(h(X^\top,\,Y^\top),h(Z^\top,\,W^\top))
\]
is useful in the  study of extrinsic geometry of foliations, see \cite{rw-m}.
The traces (along $\widetilde{\cal D}$)
\begin{eqnarray*}
 \Ric_{\rm ex}(X,\,Y)\eq
 \sum\nolimits_{a} \epsilon_a\,R_{\,\rm ex}(X,\,Y,\,E_a,\,E_a),\\
 \Sm_{\,\rm ex} \eq
 \sum\nolimits_{a}\epsilon_a\Ric_{\rm ex}(E_a,\,E_a)\,.
\end{eqnarray*}
are the extrinsic Ricci and scalar curvatures of $\widetilde{\cal D}$.
Note that
 $\Sm_{\,\rm ex}=g(H,H)-\<h,h\>$.

\begin{rem}\rm
Tracing \eqref{E-genRicN}$_1$ over ${\cal D}$ and applying \eqref{E-Ric-Sm} and the equalities
\begin{eqnarray*}
 \tr_{g}\Psi \eq \sum\nolimits_{\,i}\epsilon_i\tr_{g}(A_i^2 + (T^\sharp_i)^2) = \<h,h\> - \<T,T\>,\\
 \tr{\cal A}\eq\<h,h\>,\quad \tr{\cal T} = -\<T,T\>,\\
 \tr_{g}\,({\Div}\,h) \eq \Div H ,\quad
 \tr_{g}\,({\rm Def}_{\cal D}\,H) = \Div H +g(H,H)
\end{eqnarray*}
yield the formula (see also \cite{wa1})
\begin{equation}\label{eq-ran-ex}
 \Sm_{\rm mix} = \Sm_{\,\rm ex} +\widetilde\Sm_{\,\rm ex} +\<T,T\> +\<\tilde T,\tilde T\> + \Div(H+\tilde H)\,,
\end{equation}
which shows that $\Sm_{\rm mix}$ is built of the invariants of the  extrinsic geometry of the distributions.
\end{rem}

\subsection{Variation formulas}
\label{sec:prel}

Given an adapted pseudo-Riemannian metric $g$ on $(M,\widetilde{\cal D},{\cal D})$,
consider smooth $1$-parameter variations of $g_0 = g$,
\begin{equation}\label{E-Sdtg}
 \big\{ g_t \in {\rm Riem}(M, \, \widetilde{\cal D}, \, {\cal D}) : |t| < \eps \big\}\,.
\end{equation}
The induced infinitesimal variations, presented by a symmetric $(0,2)$-tensor
${B}_t\equiv(\partial g_t/\partial t)\in{\mathfrak M}$, are supported in a relatively compact domain $\Omega$ in $M$.
 We adopt the notations
\begin{equation}\label{E-Sdtg-2}
 \partial_t \equiv \partial/\partial t,\quad
 {B} \equiv {\dt g_t}_{\,|\,t=0}.
\end{equation}
We will define several tensors for one of distributions,
similar notions for the second distribution are introduced using $\,\widetilde{}\,$ notation.
Taking into account
(\ref{e:Decomposition2}), it is sufficient  to work with special curves
$\{g_t\}_{|t|<\eps}$ starting at $g \in {\rm Riem}(M,\,\widetilde{\cal D},\,{\cal D})$
called $g^\perp$-\textit{variations}:
\begin{equation}\label{e:Var}
 \big\{ g^\bot_t +\tilde g : \ |t| < \,\eps \big\},
\end{equation}
as the associated infinitesimal variations ${B}_t$ lie in ${\mathfrak M}_{\cal D}$.
For adapted variations \eqref{E-Sdtg}--\,\eqref{E-Sdtg-2}
we have, see for example~\cite{rw-m},
\begin{equation}\label{eq2G}
 2\,g_t(\dt(\nabla^t_X\,Y), Z) = (\nabla^t_X\,{B})(Y,Z)+(\nabla^t_Y\,{B})(X,Z)-(\nabla^t_Z\,{B})(X,Y),\quad
 X,Y,Z\in\mathfrak{X}_M.
\end{equation}

\begin{lem}\label{prop-Ei-a}
Let a local $(\widetilde{\cal D},\,{\cal D})$-adapted frame $\{E_a,\,{\cal E}_{i}\}$
evolve by \eqref{E-Sdtg}--\,\eqref{E-Sdtg-2} according to
 \begin{equation*}
 \dt E_a=-(1/2)\,{B}_t^\sharp(E_a),\qquad
 \dt {\cal E}_{i}=-(1/2)\,{B}_t^\sharp({\cal E}_{i}).
\end{equation*}
 Then,  for all $\,t, $ $\{E_a(t),{\cal E}_{i}(t)\}$ is a $g_t$-orthonormal frame adapted to
 $(\widetilde{\cal D},{\cal D})$.
\end{lem}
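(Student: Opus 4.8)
The plan is to establish two separate assertions: that the evolving frame stays \emph{adapted} to the fixed splitting $TM=\widetilde{\cal D}\oplus{\cal D}$ (i.e.\ $E_a(t)\in\widetilde{\cal D}$ and ${\cal E}_i(t)\in{\cal D}$ for all $t$), and that it stays $g_t$-orthonormal. I would treat adaptedness first, because once it is known the orthonormality will reduce to a single computation valid for every pair of frame vectors at once.

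For adaptedness, I would first record that $B_t\in{\mathfrak M}$ for every $t$: differentiating the identity $g_t(X^\top,Y^\perp)=0$, which holds since every $g_t\in{\rm Riem}(M,\,\widetilde{\cal D},\,{\cal D})$, gives $B_t(X^\top,Y^\perp)=0$. As observed earlier, this makes both $\widetilde{\cal D}$ and ${\cal D}$ invariant under $B_t^\sharp$; concretely, for $X\in\widetilde{\cal D}$ and any $Y\in{\cal D}$ one has $g_t(B_t^\sharp X,Y)=B_t(X,Y)=0$, so $B_t^\sharp X$ is $g_t$-orthogonal to the non-degenerate ${\cal D}$ and hence lies in $\widetilde{\cal D}$, and symmetrically $B_t^\sharp({\cal D})\subset{\cal D}$. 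Fixing $x\in M$, the evolution $\dt E_a=-\tfrac12\,B_t^\sharp(E_a)$ is then a linear ODE in the fixed vector space $T_xM$ whose time-dependent generator preserves the subspace $\widetilde{\cal D}_x$. Projecting onto the ${\cal D}$-component, I would set $u(t)=E_a(t)^\perp$ and use invariance to obtain $\dt u=-\tfrac12\,B_t^\sharp u$ with $u(0)=0$; uniqueness for linear ODEs forces $u\equiv0$, so $E_a(t)\in\widetilde{\cal D}$ for all $t$, and the same argument yields ${\cal E}_i(t)\in{\cal D}$.

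For orthonormality, I would compute, for any two frame vectors $Z,W$ evolving by $\dt Z=-\tfrac12\,B_t^\sharp Z$,
\begin{equation*}
 \dt\,[\,g_t(Z,W)\,]
 = B_t(Z,W) + g_t(-\tfrac12\,B_t^\sharp Z,\,W) + g_t(Z,\,-\tfrac12\,B_t^\sharp W)
 = B_t(Z,W) - \tfrac12\,B_t(Z,W) - \tfrac12\,B_t(Z,W) = 0,
\end{equation*}
where I use $g_t(B_t^\sharp Z,W)=B_t(Z,W)=g_t(Z,B_t^\sharp W)$ from the definition of $\sharp$ and the symmetry of $B_t$. Thus every inner product $g_t(Z,W)$ is independent of $t$; applying this to the pairs $(E_a,E_b)$, $({\cal E}_i,{\cal E}_j)$ and $(E_a,{\cal E}_i)$ shows the entire Gram matrix of the frame is frozen in $t$, so the $g_0$-orthonormality at $t=0$ persists for all $t$.

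These computations are routine; the only point that genuinely requires care is the adaptedness claim, where I must justify that the linear flow generated by $B_t^\sharp$ leaves the fixed subbundles invariant. This is precisely where the hypothesis $B_t\in{\mathfrak M}$ is needed, entering through the $B_t^\sharp$-invariance of $\widetilde{\cal D}$ and ${\cal D}$, and where I would be most careful to invoke both the non-degeneracy of the distributions (so that $g_t$-orthogonality to ${\cal D}$ pins a vector into $\widetilde{\cal D}$) and uniqueness of solutions of the resulting homogeneous linear ODE.
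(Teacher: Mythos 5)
Your proposal is correct, and its core computation --- differentiating $g_t(Z,W)$ along the flow $\dt Z=-\tfrac12 B_t^\sharp Z$ and watching the three terms cancel --- is exactly the paper's proof, which consists of nothing more than that one line applied to $\{E_a(t)\}$ ``and similarly'' to $\{{\cal E}_i(t)\}$. Where you go beyond the paper is in treating adaptedness as a separate claim needing proof: you show $B_t^\sharp$ preserves $\widetilde{\cal D}$ and ${\cal D}$ (using $B_t\in{\mathfrak M}$ together with non-degeneracy of the distributions) and then invoke uniqueness for the resulting homogeneous linear ODE to conclude $E_a(t)^\perp\equiv 0$. The paper does not spell this out; it relies on the earlier remark that ``if $B\in{\mathfrak M}$ then $\widetilde{\cal D}$ and ${\cal D}$ are $B^\sharp$-invariant'' and leaves the invariance-of-the-flow step implicit. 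Your version is therefore slightly longer but more self-contained, and it correctly identifies the one place where a hypothesis (adaptedness of $B_t$) is genuinely used for something other than the Gram-matrix cancellation. One could even note that once adaptedness of the frame is known, the vanishing of $g_t(E_a,{\cal E}_i)$ is automatic and need not be run through the derivative computation at all --- but your uniform treatment of all pairs is equally valid.
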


\proof
For $\{E_a(t)\}$ (and similarly for $\{{\cal E}_{i}(t)\}$) we have
\begin{eqnarray*}
 &&\dt(g_t(E_a, E_b)) = g_t(\dt E_a(t), E_b(t)) +g_t(E_a(t), \dt E_b(t))
 +(\dt g_t)(E_a(t), E_b(t))  \\
 &&= {B}_t(E_a(t), E_b(t))-\frac12\,g_t({B}_t^\sharp(E_a(t)), E_b(t)) -\frac12\,g_t(E_a(t), {B}_t^\sharp(E_b(t)))=0.
 \qed
\end{eqnarray*}

\begin{lem}[see~\cite{bdr}]\label{L-tildeHh}
For $g^\perp$-variations \eqref{E-Sdtg}--\,\eqref{E-Sdtg-2} we have
\begin{eqnarray}\label{eq-hatbH-1}
 2\,\dt\tilde h(X,Y) \eq (\tilde h-\tilde T)({B}^\sharp(X),Y)  +(\tilde h+\tilde T)(X,{B}^\sharp(Y))
 -\!\widetilde\nabla{B}(X,Y),\\
\label{eq-hatH}
 2\,\dt\tilde H \eq -\!\widetilde\nabla(\tr {B}^\sharp),\qquad
 \dt h = -{B}^\sharp\circ h,\qquad \dt H = -{B}^\sharp(H).
\end{eqnarray}
Hence, $g^\perp$-variations preserve total umbilicity, total geodesy and harmonicity of $\,\widetilde{\cal D}$.
\end{lem}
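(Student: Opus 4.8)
The plan is to obtain all four formulas from the connection-variation identity \eqref{eq2G}, exploiting two features of a $g^\perp$-variation: the subbundles $\widetilde{\cal D},{\cal D}$ and hence the projections $\top,\perp$ are $t$-independent, and $B\in{\mathfrak M}_{\cal D}$ forces $B^\sharp$ to annihilate $\widetilde{\cal D}$ and to preserve ${\cal D}$. The computation splits according to whether the arguments lie in ${\cal D}$ (for $\tilde h$ and $\tilde H$, which are $\widetilde{\cal D}$-valued) or in $\widetilde{\cal D}$ (for $h$ and $H$, which are ${\cal D}$-valued). A standard ingredient throughout is the configuration-tensor description of the mixed covariant derivative: for $X\in{\cal D}$ and $Z\in\widetilde{\cal D}$ one has $(\nabla_X Z)^\perp=-(\tilde A_Z+\tilde T^\sharp_Z)X$, obtained by pairing with $W\in{\cal D}$ and using $g(\nabla_X Z,W)=-g(Z,(\nabla_X W)^\top)=-g(Z,\tilde h(X,W)+\tilde T(X,W))$.

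First I would derive \eqref{eq-hatbH-1}. Fixing $t$-independent $X,Y\in{\mathfrak X}_{\cal D}$ and $Z\in{\mathfrak X}_{\widetilde{\cal D}}$ and taking the $\widetilde{\cal D}$-part, symmetrization of \eqref{eq2G} gives $2\,g(\dt\tilde h(X,Y),Z)=(\nabla_X B)(Y,Z)+(\nabla_Y B)(X,Z)-(\nabla_Z B)(X,Y)$. The last term is, by the definition of the $\widetilde{\cal D}$-valued object $\widetilde\nabla B(X,Y)$ through $g(\widetilde\nabla B(X,Y),Z)=(\nabla_Z B)(X,Y)$, exactly $-g(\widetilde\nabla B(X,Y),Z)$. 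In the first two terms the undifferentiated and the $\widetilde{\cal D}$-differentiated pieces of $B$ vanish because $B$ kills $\widetilde{\cal D}$, leaving $(\nabla_X B)(Y,Z)=-B(Y,(\nabla_X Z)^\perp)$ and similarly for the second; inserting the mixed-derivative identity turns these into $g(B^\sharp Y,(\tilde A_Z+\tilde T^\sharp_Z)X)$ and $g(B^\sharp X,(\tilde A_Z+\tilde T^\sharp_Z)Y)$. Rewriting $\tilde A_Z,\tilde T^\sharp_Z$ back through $g(\tilde h(\cdot,\cdot),Z)$ and $g(\tilde T(\cdot,\cdot),Z)$, and using that $\tilde A_Z$ is self-adjoint while $\tilde T^\sharp_Z$ is skew-adjoint, matches these two contributions to the pairings of $(\tilde h-\tilde T)(B^\sharp X,Y)$ and $(\tilde h+\tilde T)(X,B^\sharp Y)$ with $Z$. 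This reassembly is the main obstacle: the combinatorics of which terms survive and the careful tracking of the signs produced by the symmetry of $\tilde h$ and the antisymmetry of $\tilde T$.

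The remaining formulas are lighter. For $\dt h$ I would fix $X,Y\in{\mathfrak X}_{\widetilde{\cal D}}$, pair with $W\in{\cal D}$, and apply \eqref{eq2G}: now the $(\nabla_W B)(X,Y)$ term drops since both arguments lie in $\widetilde{\cal D}$, while the other two terms, via $(\nabla_X Y)^\perp=h(X,Y)+T(X,Y)$ and the cancellation of the antisymmetric $T$-parts, collapse to $-B(h(X,Y),W)$, giving $\dt h=-B^\sharp\circ h$. The trace formulas follow by differentiating $\tilde H=\sum_i\epsilon_i\tilde h({\cal E}_i,{\cal E}_i)$ and $H=\sum_a\epsilon_a h(E_a,E_a)$ along the adapted orthonormal frame evolving as in Lemma~\ref{prop-Ei-a}. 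For $\dt H$ the frame terms vanish because $B^\sharp E_a=0$, leaving $\dt H=-B^\sharp(H)$. For $\dt\tilde H$ I would substitute \eqref{eq-hatbH-1}; the $\tilde h$-contributions cancel against the frame terms, the $\tilde T$-contribution vanishes since $\sum_i\epsilon_i\tilde T(B^\sharp{\cal E}_i,{\cal E}_i)=0$ (because $\tr(\tilde T^\sharp_Z B^\sharp)=0$ for skew-adjoint $\tilde T^\sharp_Z$ and self-adjoint $B^\sharp$), and the surviving term is identified via $\sum_i\epsilon_i(\nabla_Z B)({\cal E}_i,{\cal E}_i)=Z(\tr B^\sharp)$, yielding $2\,\dt\tilde H=-\widetilde\nabla(\tr B^\sharp)$.

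Finally, the preservation statements are immediate from the shape of these formulas. Since the arguments above hold at every $t$, the tensors satisfy the linear homogeneous evolution equations $\dt h_t=-B_t^\sharp\circ h_t$ and $\dt H_t=-B_t^\sharp(H_t)$, so $h_0=0$ forces $h_t\equiv0$ and $H_0=0$ forces $H_t\equiv0$, i.e. total geodesy and harmonicity persist. For total umbilicity, note that $\tilde g$ is fixed along a $g^\perp$-variation, so the defect $S_t:=h_t-\frac1n H_t\,\tilde g$ obeys $\dt S_t=-B_t^\sharp\circ S_t$; hence $S_0=0$ gives $S_t\equiv0$ for all $t$.
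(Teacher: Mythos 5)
Your argument is correct: all four formulas follow, exactly as you set them up, from the connection-variation identity \eqref{eq2G} together with the facts that $B^\sharp$ kills $\widetilde{\cal D}$ and preserves ${\cal D}$, and the sign bookkeeping in \eqref{eq-hatbH-1} (via $(\nabla_XZ)^\perp=-(\tilde A_Z+\tilde T^\sharp_Z)X$ and the symmetry/antisymmetry of $\tilde h$ and $\tilde T$) checks out, as do the trace computations and the linear-ODE argument for the preservation claims. The paper itself offers no proof of this lemma, deferring to the reference \cite{bdr}; your derivation is the natural one and supplies the omitted details in full.
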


 Define symmetric $(0,2)$-tensors $\Phi_h$ and $\Phi_T$ (the last one vanishes when $n=1$), using the identities
 (with arbitrary~$B\in{\mathfrak M}$)
\begin{eqnarray*}
 \<\Phi_h,\ B \> \eq B(H,\,H) -\sum\nolimits_{\,a,\,b} \epsilon_a\,\epsilon_b\,B(h(E_a,E_b), h(E_a,E_b)),\\
 \<\Phi_T,\ B \> \eq -\sum\nolimits_{\,a,\,b} \epsilon_a\,\epsilon_b\,B(T(E_a,E_b), T(E_a,E_b)).
\end{eqnarray*}
We have
 $\tr_{g}\Phi_h=\Sm_{\,\rm ex}$ and $\tr_{g}\Phi_T=-\<T,T\>$.
Define a
$(1,1)$-tensor (with zero trace)
\[
 {\cal K} =\sum\nolimits_{\,i} \epsilon_i\,[T^\sharp_i , A_i]
 =\sum\nolimits_{\,i} \epsilon_i\,(T^\sharp_i A_i - A_i T^\sharp_i).
\]

\begin{rem}\rm \label{Phihzero}
1) Let $g$ be definite on $\widetilde{\cal D}$. Then $\Phi_{h} =0$ if and only if one of the following holds:
\[
 (i)~h=0; \quad
 (ii)~H\ne0,\ \Sm_{\,\rm ex}=0 \ \mbox{\rm and the image of}\ h\ \mbox{\rm is spanned by} \ H\,.
\]
To show this, consider any vector $X \in {\cal D}$ such that $g(X,H)=0$. Then
\begin{equation*}
 \<\Phi_{h}, X^\flat\otimes X^\flat\> = g(X,H)^{2} -\sum\nolimits_{\,a,b}\epsilon_{a}\epsilon_{b}\,g(X, h(E_{a},E_{b}))^{2}
 = -\sum\nolimits_{\,a,b}\epsilon_{a}\epsilon_{b}\,g(X, h(E_{a}, E_{b}))^{2}.
\end{equation*}
Since all $\epsilon_{a}$ are of the same sign,
the above sum is equal to zero if and only if every summand vanishes.
Moreover, $\<\Phi_{h},\,H^{\flat}\otimes H^{\flat}\> = g(H,H)\,\Sm_{\,\rm ex}$ holds.
Similarly, if $\Phi_{T} = 0$ then we have
\[
 \<\Phi_{T}, X^\flat\otimes X^\flat\> = -\sum\nolimits_{\,a,b}\epsilon_{a}\,\epsilon_{b}\,g(X, T(E_{a},E_{b}))^{2}=0
 \quad (X\in{\cal D}).
\]
Hence, if $g$ is definite on $\widetilde{\cal D}$ ($\epsilon_{a}=\epsilon_{b}$)
then the condition $\Phi_{T}=0$ is equivalent to $T=0$.
Therefore, $\Phi_T$ can be viewed as a measure of non-integrability of ${\cal D}$.

2) If ${\cal D}$ is integrable then $\tilde T^\sharp_a = 0$ for all $a \in \{1, \ldots, n\}$, hence $\tilde{\cal K} =0$. Also, if ${\cal D}$ is totally umbilical, then every operator $\tilde A_a$ is a multiple of identity and $\tilde{\cal K}$ vanishes as well.
\end{rem}

\begin{lem}\label{L-H2h2-D}
For $g^\perp$-variations
 we have
\begin{eqnarray}\label{E-h2T2-D1}
 &&\dt\,\widetilde{\Sm}_{\,\rm ex} = \<(\Div\tilde H)\,g^\bot -\Div\tilde h -\tilde{\cal K}^\flat,\, {B}\>
 +\Div(\<\tilde h,\,{B}\>-(\tr_{g} {B})\tilde H),\\
\label{E-h2-D1}
 &&\dt\,\Sm_{\,\rm ex} = -\<\Phi_h,\ {B}\>,\\
\label{E-T2-D1}
 && \dt\,\<\tilde T,\tilde T\> = \<2\,\widetilde{\cal T}^\flat,\ {B}\>,\quad
    \dt\,\<T,T\> =-\<\Phi_T,\ {B}\>\,.
\end{eqnarray}
\end{lem}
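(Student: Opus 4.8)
The plan is to evaluate all four scalars on the evolving $g_t$-orthonormal adapted frame $\{E_a(t),{\cal E}_i(t)\}$ of Lemma~\ref{prop-Ei-a}, so that the frame stays orthonormal and the signs $\epsilon_a,\epsilon_i$ remain constant in $t$. Before differentiating I would record two structural simplifications valid for $g^\perp$-variations $B\in{\mathfrak M}_{\cal D}$: since $B$ annihilates $\widetilde{\cal D}$, we have $B^\sharp(E_a)=0$, hence $\dt E_a=0$, whereas $\dt{\cal E}_i=-\frac12\,B^\sharp({\cal E}_i)$ need not vanish; and since the integrability tensors are assembled from Lie brackets and the (metric-independent) projections onto the fixed subbundles, both $T$ and $\tilde T$ are unchanged as tensors, $\dt T=\dt\tilde T=0$. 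I would also use $\Sm_{\,\rm ex}=g(H,H)-\<h,h\>$ and $\widetilde\Sm_{\,\rm ex}=g(\tilde H,\tilde H)-\<\tilde h,\tilde h\>$, together with the identity $g(B^\sharp v,v)=B(v,v)$.

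The two purely algebraic formulas then fall out at once. For \eqref{E-h2-D1} I differentiate $g(H,H)$ and $\<h,h\>=\sum_{a,b}\epsilon_a\epsilon_b\,g(h(E_a,E_b),h(E_a,E_b))$; because $\dt E_a=0$, the only inputs are $\dt g=B$ together with $\dt h=-B^\sharp\circ h$ and $\dt H=-B^\sharp(H)$ from Lemma~\ref{L-tildeHh}, and the resulting $-B(\cdot,\cdot)$ contributions assemble into $-\<\Phi_h,B\>$ by the defining identity of $\Phi_h$. For the second half of \eqref{E-T2-D1}, in $\<T,T\>=\sum_{a,b}\epsilon_a\epsilon_b\,g(T(E_a,E_b),T(E_a,E_b))$ both $\dt T=0$ and $\dt E_a=0$, so only $\dt g$ survives and the outcome is $\sum_{a,b}\epsilon_a\epsilon_b\,B(T(E_a,E_b),T(E_a,E_b))=-\<\Phi_T,B\>$.

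The first half of \eqref{E-T2-D1} is driven entirely by the frame: in $\<\tilde T,\tilde T\>=\sum_{i,j}\epsilon_i\epsilon_j\,g(\tilde T({\cal E}_i,{\cal E}_j),\tilde T({\cal E}_i,{\cal E}_j))$ the $\dt g$-term vanishes (because $\tilde T$ takes values in $\widetilde{\cal D}$, where $B=0$), and differentiating the arguments through $\dt{\cal E}_i=-\frac12\,B^\sharp{\cal E}_i$ leaves, after merging the two terms by antisymmetry of $\tilde T$, the expression $-2\sum_{i,j}\epsilon_i\epsilon_j\,g(\tilde T(B^\sharp{\cal E}_i,{\cal E}_j),\tilde T({\cal E}_i,{\cal E}_j))$. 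Expanding $\tilde T(\cdot,\cdot)$ in the $E_a$-frame, summing over $j$ by completeness, and using the skew-adjointness of $\tilde T^\sharp_a$ to turn $(\tilde T^\sharp_a)^2$ into $\widetilde{\cal T}=\sum_a\epsilon_a(\tilde T^\sharp_a)^2$ gives $2\tr_{g}(\widetilde{\cal T}B^\sharp)=\<2\,\widetilde{\cal T}^\flat,B\>$.

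The hard case is \eqref{E-h2T2-D1}. First I would combine $\dt\tilde h$ from \eqref{eq-hatbH-1} with the frame terms $\tilde h(\dt{\cal E}_i,{\cal E}_j)+\tilde h({\cal E}_i,\dt{\cal E}_j)$; the $\tilde h$-with-$B^\sharp$ pieces cancel, leaving only the two $\pm\tilde T(B^\sharp\cdot,\cdot)$ pieces and the derivative piece $-\widetilde\nabla B$. Pairing the $\tilde T$-pieces against $\tilde h$ and using the skew-/self-adjointness of $\tilde T^\sharp_a,\tilde A_a$ produces $\sum_a\epsilon_a\,\tilde T^\sharp_a\tilde A_a$; since this is contracted with the symmetric $B^\sharp$, only its self-adjoint (commutator) part survives, giving exactly $-\<\tilde{\cal K}^\flat,B\>$. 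The term $\dt g(\tilde H,\tilde H)=-\tilde H(\tr_{g}B)$ I would rewrite, via $\tilde H(f)=\Div(f\tilde H)-f\,\Div\tilde H$ and $\tr_{g}B=\<g^\bot,B\>$, as $\<(\Div\tilde H)\,g^\bot,B\>-\Div((\tr_{g}B)\tilde H)$. The hard part will be the remaining $\widetilde\nabla B$ contribution: I would integrate by parts to move the covariant derivative off $B$, and then carefully account for the mean-curvature corrections coming from the gap between partial and full divergences (via \eqref{E-divN} and \eqref{E-divP}), so that the whole thing organizes into $\<-\Div\tilde h,B\>$ plus the total divergence $\Div\<\tilde h,B\>$, the latter fusing with the $\tilde H$-divergence above into $\Div(\<\tilde h,B\>-(\tr_{g}B)\tilde H)$. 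Checking that these correction terms cancel exactly, leaving no stray $\tilde H$- or $H$-terms, is the delicate step.
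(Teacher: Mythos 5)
Your proposal is correct and follows essentially the same route as the paper's proof: the evolving adapted orthonormal frame of Lemma~\ref{prop-Ei-a}, the variation formulas of Lemma~\ref{L-tildeHh}, term-by-term differentiation, and the divergence identities \eqref{E-divN}--\eqref{E-divP} to organize the $\widetilde\nabla B$ contribution into $\<-\Div\tilde h,B\>$ plus a total divergence. Your preliminary observations that $B^\sharp(E_a)=0$ (hence $\dt E_a=0$) and that $T,\tilde T$ are metric-independent are a mild streamlining of the same computation, and your identification of the delicate step (the cancellation of the mean-curvature corrections, effected in the paper via $\widetilde{\Div}\,\tilde h-\<\tilde h,\tilde H\>=\Div\tilde h$) is accurate.
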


\proof
Assume $\nabla_{a}\,{\cal E}_i\in\widetilde{\cal D}_x$ at a point $x\in M$.
In the calculations below we use \eqref{eq2G} and Lemmas~\ref{prop-Ei-a} and~\ref{L-tildeHh}.
First we obtain~\eqref{E-T2-D1}$_1$:
\begin{eqnarray*}
 &&\dt\,\<\tilde T,\tilde T\> = 2\sum\nolimits_{\,i,j,a} \epsilon_i\,\epsilon_j\,\epsilon_a\, g(\tilde T({\cal E}_i,{\cal E}_j), E_a)
 \,g\big(\tilde T({\dt\cal E}_i,{\cal E}_j) +\tilde T({\cal E}_i,\dt{\cal E}_j), E_a\big)  \\
 \eq -\sum\nolimits_{\,i,j,a} \epsilon_i\,\epsilon_j\,\epsilon_a\, g(\tilde T({\cal E}_i,{\cal E}_j), E_a)
 \,g\big(\tilde T({B}^\sharp({\cal E}_i),{\cal E}_j)+\tilde T({\cal E}_i,{B}^\sharp({\cal E}_j)), E_a\big)  \\
 \eq -\!\sum\nolimits_{\,i,j,a} \epsilon_i\,\epsilon_j\,\epsilon_a\, g(\tilde T^\sharp_a({\cal E}_i),{\cal E}_j)
 \,g((\tilde T^\sharp_a {B}^\sharp +{B}^\sharp\tilde T^\sharp_a)({\cal E}_i), {\cal E}_j) \\
 \eq -\!\sum\nolimits_{\,i,a} \epsilon_i\,\epsilon_a\, g(
 (\tilde T^\sharp_a {B}^\sharp{+}{B}^\sharp\tilde T^\sharp_a)({\cal E}_i), \tilde T^\sharp_a({\cal E}_i))
 =\sum\nolimits_{\,i,a} \epsilon_i\,\epsilon_a\, g(
 ((\tilde T^\sharp_a)^2{B}^\sharp+\tilde T^\sharp_a {B}^\sharp\tilde T^\sharp_a)({\cal E}_i),\,{\cal E}_i) \\
 \eq 2\sum\nolimits_{\,a} \epsilon_a\, \tr ((\tilde T^\sharp_a)^2 {B}^\sharp)
 =2\tr (\widetilde{\cal T} {B}^\sharp) =\<2\,\widetilde{\cal T}^\flat,\ {B}\>.
\end{eqnarray*}
Next, by \eqref{E-divP}
we obtain
\begin{eqnarray*}
 &&\dt\,\<\tilde h,\tilde h\> = 2\sum\nolimits_{\,i,j,a}\epsilon_i\,\epsilon_j\,\epsilon_a\,g(\tilde h({\cal E}_i,{\cal E}_j),\,E_a)
 g(\dt(\tilde h({\cal E}_i,{\cal E}_j)),\,E_a) \\
 \eq 2\sum\nolimits_{\,i,j,a} \epsilon_i\,\epsilon_j\,\epsilon_a\, g(\tilde h({\cal E}_i,{\cal E}_j),\, E_a)
 \,g\big((\dt\tilde h)({\cal E}_i,{\cal E}_j)
 +\tilde h(\dt{\cal E}_i,{\cal E}_j)
 +\tilde h({\cal E}_i,\dt{\cal E}_j),\,E_a\big) \\
  \eq\!\sum\nolimits_{\,i,j,a} \epsilon_i\,\epsilon_j\,\epsilon_a\, g(\tilde h({\cal E}_i,{\cal E}_j),\, E_a)
 \,\big(g(\tilde T({\cal E}_i,{B}^\sharp({\cal E}_j))-\tilde T({B}^\sharp({\cal E}_i),{\cal E}_j),\,E_a) -\nabla_a\,{B}({\cal E}_i,{\cal E}_j)\big)  \\
  \eq \sum\nolimits_{\,i,j,a} \epsilon_i\,\epsilon_j\,\epsilon_a \Big(
 g(\tilde A_a({\cal E}_i),\,{\cal E}_j)\,g([{B}^\sharp, \tilde T^\sharp_{a}]({\cal E}_i),\,{\cal E}_j)
 -\nabla_a\,\big({B}({\cal E}_i,{\cal E}_j)\,g(\tilde h({\cal E}_i,{\cal E}_j),\, E_a)\big)   \\
 \minus \nabla_a\,g(\tilde h({\cal E}_i,{\cal E}_j),\, E_a)\,{B}({\cal E}_i,\,{\cal E}_j)\Big)
 = \sum\nolimits_{\,i,a}\epsilon_i\,\epsilon_a\big({B}(\tilde T^\sharp_{a}({\cal E}_i), \tilde A_a({\cal E}_i))
 +{B}({\cal E}_i,\,\tilde T^\sharp_{a}\tilde A_a({\cal E}_i))\big) \\
 \plus \<\widetilde{\Div}\,\tilde h - \<\tilde h,\,\tilde H\>,\, {B}\> -\Div(\<\tilde h,\,{B}\>)
 = \<
 {\Div}\,\tilde h +\tilde{\cal K}^\flat,\, {B}\> -\Div\<\tilde h,\,{B}\>.
\end{eqnarray*}
Here we used $(\tilde T^\sharp_{a})^*=-\tilde T^\sharp_{a}$, $(\tilde A_{a})^*=\tilde A_{a}$ and $(B^\sharp)^* = B^\sharp$, hence
\[
  \tr(\tilde T^\sharp_{a} \tilde A_a B^\sharp) = \tr(B^\sharp(\tilde T^\sharp_{a} \tilde A_a)^*)
 =\tr((\tilde T^\sharp_{a} \tilde A_a)^*B^\sharp) =\tr(\tilde A_a(\tilde T^\sharp_{a})^*B^\sharp)
 = -\tr(\tilde A_a \tilde T^\sharp_{a} B^\sharp).
\]
Next, we get \eqref{E-h2T2-D1}, applying ${B}(\tilde H,\tilde H)=0$ (since ${B}$ vanishes on $\widetilde{\cal D}$) and
\begin{eqnarray*}
 \dt\,g(\tilde H,\tilde H) \eq 2\,g(\dt\tilde H,\,\tilde H) =-g(\nabla(\tr {B}^\sharp),\,\tilde H).
\end{eqnarray*}
Notice that $g(\nabla(\tr {B}^\sharp),\,\tilde H)=\Div((\tr {B}^\sharp)\tilde H)-(\Div\tilde H)\tr {B}^\sharp$.
We have
\begin{eqnarray*}
 \dt\,g(H,H) \eq {B}(H,H) + 2\,g(\dt H,\,H) = {B}(H,H) -2\,g({B}^\sharp(H),\,H) =-{B}(H,H), \\
 \dt\,\<h,h\>\eq\dt\sum\nolimits_{\,i,\,a,\,b}\epsilon_i\,\epsilon_a\,\epsilon_b\,g(h(E_a,\,E_b),\,{\cal E}_i)^2 \\
 \eq 2\sum\nolimits_{\,i,\,a,\,b}\epsilon_i\,\epsilon_a\,\epsilon_b\,g(h(E_a,\,E_b),\,{\cal E}_i)\,\dt g(h(E_a,\,E_b),\,{\cal E}_i) \\
 \eq -\!\sum\nolimits_{\,i,\,a,\,b} \epsilon_i\,\epsilon_a\,\epsilon_b\,g(h(E_a,\,E_b),\,{\cal E}_i)\,
 g(h(E_a,\,E_b),\,{B}^\sharp({\cal E}_i))  \\
 \eq -\!\sum\nolimits_{\,a,\,b} \epsilon_a\,\epsilon_b\, {B}(h(E_a,\,E_b),\,h(E_a,\,E_b))\,.
\end{eqnarray*}
From the above, \eqref{E-h2-D1} follows. Finally, we have \eqref{E-T2-D1}$_2$:
\begin{eqnarray*}
 \dt\,\<T,T\> \eq \dt\sum\nolimits_{\,i,\,a,\,b} \epsilon_i\,\epsilon_a\,\epsilon_b\, g(T(E_a,\,E_b),\,{\cal E}_i)^2
 \\ \eq 2\sum\nolimits_{\,i,\,a,\,b} \epsilon_i\,\epsilon_a\,\epsilon_b\, g(T(E_a,\,E_b),\,{\cal E}_i)\,\dt(g(T(E_a,\,E_b),\,{\cal E}_i))  \\
 \eq 2\sum\nolimits_{\,i,\,a,\,b} \epsilon_i\,\epsilon_a\,\epsilon_b\, g(T(E_a,\,E_b),\,{\cal E}_i)\,
 \big( {B}(T(E_a,\,E_b),\,{\cal E}_i) +g(T(E_a,\,E_b),\,\dt{\cal E}_i)\big)   \\
 \eq \sum\nolimits_{\,i,\,a,\,b} \epsilon_i\,\epsilon_a\,\epsilon_b\, g(T(E_a,\,E_b),\,{\cal E}_i)\,g(T(E_a,\,E_b),\,{B}^\sharp({\cal E}_i)) \\
 \eq \sum\nolimits_{\,a,\,b} \epsilon_a\,\epsilon_b\, {B}(T(E_a,\,E_b),\,T(E_a,\,E_b)).\quad  \qed
\end{eqnarray*}

\subsection{Euler-Lagrange equations}
\label{subsec:EU-general}

In this section we derive directional derivatives \eqref{E-DJ}
and the Euler-Lagrange equations of $J_{\rm mix,\Omega}$
on an open pseudo-Rieman\-nian almost-product manifold for two types of $g^\perp$-variations
(i.e., either preserving the volume or not).
 For arbitrary $f \in L^1 ({\Omega},\; {\rm d}\, {\rm vol}_g)$ denote~by
\[
 f({\Omega}, g) = {\rm Vol}^{-1}({\Omega}, g) \int_{\Omega} f\,{\rm d}\,{\rm vol}_g
\]
the mean value of $f$ on ${\Omega}$.
 Together with a family $g_t$ of \eqref{e:Var}, consider on $\Omega$ the metrics
\begin{eqnarray}\label{e:Var-Bar}
 \bar g_t \eq \phi_t g^\perp_t + \tilde g, \;\;\;
 \phi_t \equiv \big({\rm Vol}({\Omega}, g_t)/{\rm Vol}({\Omega}, g)\big)^{-2/p}\,,\ \ |t|<\eps.
\end{eqnarray}
Recall, see \cite{rw-m}, that the volume form evolves as
\begin{equation}\label{E-dtdvol}
 \partial_t\,\big({\rm d}\vol_{g_t}\!\big) = \frac12\,(\tr_{g_t} {B}_t)\,{\rm d}\vol_{g_t}.
\end{equation}
We will show that ${\rm Vol}({\Omega},\bar g_t) = {\rm Vol}({\Omega}, g)$ for all $t$.
As $\bar g_t$ are ${\cal D}$-conformal to $g_t$ with constant scale $\phi_t$, their volume forms are related~as
\begin{equation}\label{e:Conf-Vol}
 {\rm d}\vol_{\,\bar g_t} = \phi^{p/2}_t{\rm d}\vol_{\,g_t};
\end{equation}
hence, ${\rm Vol}({\Omega},\bar g_t) = \int_{\,\Omega} {\rm d}\vol_{\,\bar g_t} = {\rm Vol}({\Omega}, g)$.
 Let us differentiate (\ref{e:Conf-Vol}) in order to obtain
\begin{equation*}
 \dt\;({\rm d}\vol_{\,\bar g_t}) = (\phi_t^{p/2})^\prime \; {\rm d}\vol_{g_t} + \phi_t^{p/2}\;
 \dt\,({\rm d}\vol_{g_t})
  = \frac12\,\big(\tr {B}^\sharp_t -(\tr_{g_t} {B}_t)({\Omega}, g_t)
 \big)\,{\rm d}\vol_{\,\bar g_t}\,.
\end{equation*}
We have used (\ref{E-dtdvol}) and the fact that $\phi_0=1$ and
\begin{equation}\label{E-2vols}
 \phi^\prime_t = -\frac2p\,\Big(\frac{{\rm Vol}({\Omega},g_t)}{{\rm Vol}({\Omega}, g)}\Big)^{-\frac2p-1}
 \frac{1}{{\rm Vol}({\Omega}, g)} \int_{\Omega} \dt({\rm d}\vol_{\,g_t})
 =-\frac{\phi_t}{p}\,(\tr_{g_t} {B}_t)({\Omega}, g_t).
\end{equation}

Next we give several technical lemmas.

\begin{lem}\label{L-div}
For all $g^\perp$-variations \eqref{E-Sdtg}--\,\eqref{E-Sdtg-2} and all $g^\perp$-variations preserving the volume of $\Omega$
the evolution of $\Div$ on a $t$-dependent vector field $X$ is given by the formula
\begin{equation}\label{dtdiv}
 \dt(\,\Div X) = \Div(\dt\,X) +
 (1/2)\,X(\tr {B}^\sharp).
\end{equation}
\end{lem}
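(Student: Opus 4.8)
The plan is to deduce \eqref{dtdiv} from the intrinsic characterization of the divergence by the volume form, $\mathcal{L}_X({\rm d}\vol_{g_t}) = (\Div X)\,{\rm d}\vol_{g_t}$, and then differentiate this identity in $t$. First I would note that since ${\rm d}\vol_{g_t}$ is a top form its exterior derivative vanishes, so Cartan's formula gives $\mathcal{L}_X({\rm d}\vol_{g_t}) = d(\iota_X\,{\rm d}\vol_{g_t})$. Because $\dt$ commutes with the exterior derivative $d$ (they differentiate in independent directions) and satisfies the Leibniz rule with the interior product, I would apply $\dt$ to both sides, treating $X=X_t$ as $t$-dependent and inserting \eqref{E-dtdvol} in the form $\dt({\rm d}\vol_{g_t}) = \tfrac12(\tr B^\sharp)\,{\rm d}\vol_{g_t}$. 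This expresses $\dt(\Div X)\,{\rm d}\vol_{g_t}$ through the three contributions $\Div(\dt X)$, the spatial one-form $d(\tr B^\sharp)$, and a term proportional to $(\Div X)(\tr B^\sharp)$.

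The key algebraic input is the pointwise identity $df\wedge\iota_X\omega = (Xf)\,\omega$, valid for any function $f$, vector field $X$, and top form $\omega$; it follows by contracting the identically vanishing top-degree form $df\wedge\omega$ with $X$ and using $\iota_X(df\wedge\omega)=(Xf)\,\omega-df\wedge\iota_X\omega$. Setting $f=\tfrac12\tr B^\sharp$ and expanding $d(f\,\iota_X\,{\rm d}\vol_{g_t})=(Xf)\,{\rm d}\vol_{g_t}+f\,(\Div X)\,{\rm d}\vol_{g_t}$, the terms proportional to $f\,\Div X$ produced on the two sides cancel, leaving exactly $\dt(\Div X)=\Div(\dt X)+\tfrac12\,X(\tr B^\sharp)$ after dividing by the nowhere-vanishing form ${\rm d}\vol_{g_t}$.

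An equivalent and perhaps more transparent route, closer to the computational style of the surrounding text, is the coordinate computation: in a fixed ($t$-independent) chart one has $\Div X = \rho^{-1}\partial_k(\rho X^k)$ with $\rho=\sqrt{|\det g_t|}$ and $\dt\rho=\tfrac12(\tr B^\sharp)\rho$ by \eqref{E-dtdvol}. Differentiating in $t$ and expanding $\dt(\rho X^k)=(\dt\rho)X^k+\rho\,\dt X^k$ reproduces precisely the term $\Div(\dt X)$, a term $-f\,\Div X$ from differentiating $\rho^{-1}$, and a term $f\,\Div X + X(f)$ from $\rho^{-1}\partial_k(\rho f X^k)$; the two $f\,\Div X$ contributions cancel and yield the claimed formula. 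I would use whichever derivation fits the exposition best.

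The main point to verify carefully, and the only place where the two admissible families genuinely enter, is the volume-preserving case governed by $\bar g_t$ of \eqref{e:Var-Bar}. There the relevant volume form obeys $\dt({\rm d}\vol_{\bar g_t})=\tfrac12\big(\tr B^\sharp-(\tr_{g_t}B_t)(\Omega,g_t)\big)\,{\rm d}\vol_{\bar g_t}$, so in the argument above $f$ is replaced by $\bar f=\tfrac12\big(\tr B^\sharp-(\tr_{g_t}B_t)(\Omega,g_t)\big)$. Since the subtracted mean value $(\tr_{g_t}B_t)(\Omega,g_t)$ depends on $t$ only and is constant along $M$, it is annihilated by $X$, so $X(\bar f)=\tfrac12\,X(\tr B^\sharp)$, while the $\bar f\,\Div X$ cancellation is insensitive to the added constant. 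Hence \eqref{dtdiv} holds verbatim for both families, which is exactly why the lemma is stated for the two types of $g^\perp$-variations at once. I expect this bookkeeping of the spatially-constant correction term to be the sole subtlety; the remainder is a routine differentiation.
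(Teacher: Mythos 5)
Your argument is correct and follows essentially the same route as the paper: the paper's proof likewise consists of differentiating the identity $\Div X\cdot{\rm d}\vol_{g_t}=\mathcal{L}_X({\rm d}\vol_{g_t})$ in $t$ using \eqref{E-dtdvol}, and you have merely supplied the Cartan-formula details it leaves implicit. The only (inessential) divergence is in the volume-preserving case: the paper observes directly that $\Div_{\bar g}=\Div_{g}$ because the constant fiberwise factor $\phi$ cancels in the adapted-frame expression, whereas you track the modified evolution of ${\rm d}\vol_{\bar g_t}$ and note that $X$ annihilates the spatially constant mean-value term --- both are valid and give the same conclusion.
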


\begin{proof}
First, consider arbitrary $g^\perp$-variation $g_t$.
Differentiating the formula
 $\Div X \cdot {\rm d}\vol_g = {\cal L}_{X}({\rm d}\vol_g)$,
see \cite{O'Neill},
we obtain (\ref{dtdiv}).
Observe that for $g^\perp$-variations preserving the volume of $\Omega$, the divergence $\Div_{\bar{g}}$ with respect to metric $\bar{g} = \phi\,g^{\bot} + \tilde g$ is given~by
\begin{eqnarray*}
\nonumber
 \Div_{\bar{g}} X \eq \sum\nolimits_{a} \epsilon_{a}\,\tilde g({\bar \nabla}_{a}X, E_{a})
 +\sum\nolimits_{i} \epsilon_{i}\,\phi\,g^{\bot}({\bar \nabla}_{\phi^{-1/2} {\cal E}_{i}}X, \phi^{-1/2} {\cal E}_{i})\\
 \eq \sum\nolimits_{a} \epsilon_{a}\,g({\nabla}_{a}X, E_{a})
 + \sum\nolimits_{i} \epsilon_{i}\,g({ \nabla}_{ {\cal E}_{i}}X,  {\cal E}_{i}) = \Div X.
\end{eqnarray*}
Hence, again we obtain (\ref{dtdiv}).
\end{proof}

\begin{lem}\label{divH+tildeH}
For any $g^\perp$-variation $g_t$ and $\bar g_t$ of \eqref{e:Var-Bar} supporting in
$\Omega\subset M$, we have
\begin{equation*}
 {\rm\frac{d}{dt}}\int_{\Omega} \Div( H + \tilde H)\, {\rm d}\vol_g \!=\!\left\{
 \begin{array}{cc}
   0 & \mbox{\rm for}\ \ g_t, \\
   \!\!\frac{1}{2}\Div\big( \frac{2-p}p\,H - {\tilde H} \big)(\Omega, g) \int_{\Omega} (\tr_g B) \, {\rm d}\vol_g &
   \mbox{\rm for}\ \ \bar g_t.
 \end{array}\right.
\end{equation*}
\end{lem}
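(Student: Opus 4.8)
The plan is to exploit that both integrands are total divergences, hence exact top-forms: for any metric and vector field one has $(\Div\xi)\,{\rm d}\vol = {\cal L}_\xi({\rm d}\vol) = d(\iota_\xi\,{\rm d}\vol)$, so by Stokes' theorem each integral reduces to a boundary term over $\partial\Omega$. Write $I(t)=\int_\Omega\Div(H+\tilde H)\,{\rm d}\vol$ for the running metric. The second ingredient I would prepare in advance is the behaviour of the mean curvatures under the ${\cal D}$-conformal rescaling $\bar g=\phi\,g^\perp+\tilde g$ with $\phi$ constant in space.

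For the family $g_t$ the argument is immediate. Since the variation tensor $B$ is supported in a compact subset of $\Omega$, we have $g_t=g$ on a neighbourhood of $\partial\Omega$, so $H_t$, $\tilde H_t$ and ${\rm d}\vol_{g_t}$ agree there with their values at $t=0$. Stokes' theorem then gives $I(t)=\int_{\partial\Omega}\iota_{H+\tilde H}\,{\rm d}\vol_g$, which is independent of $t$; hence $I'(0)=0$. (Equivalently, differentiating under the integral and using Lemma~\ref{L-div} together with \eqref{E-dtdvol} rewrites $I'(0)$ as $\int_\Omega\Div\big(\dt(H+\tilde H)+\tfrac12(\tr_g B)(H+\tilde H)\big)\,{\rm d}\vol_g$, which vanishes by the Divergence Theorem since the bracketed field is supported in $\Omega$ by Lemma~\ref{L-tildeHh}.)

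For $\bar g_t=\phi_t g_t^\perp+\tilde g$ the variation is no longer compactly supported, and the substantive step is the transformation of the mean curvatures. A Koszul-formula computation for $\bar g=\phi\,g^\perp+\tilde g$ (with $\phi$ spatially constant) shows $\bar H=\phi^{-1}H$ for the mean curvature of $\widetilde{\cal D}$ and $\tilde{\bar H}=\tilde H$ for that of ${\cal D}$; I expect this to be the main obstacle. Applying it with base metric $g_t$, together with $\Div_{\bar g_t}X=\Div_{g_t}X$ (the computation in Lemma~\ref{L-div}) and ${\rm d}\vol_{\bar g_t}=\phi_t^{p/2}{\rm d}\vol_{g_t}$ from \eqref{e:Conf-Vol}, and pulling the spatial constant $\phi_t$ through the divergence and out of the integral, I would factorize
\[
 \bar I(t)=\phi_t^{\,p/2-1}\,a(t)+\phi_t^{\,p/2}\,b(t),\qquad
 a(t)=\int_\Omega\Div_{g_t}H_t\,{\rm d}\vol_{g_t},\quad b(t)=\int_\Omega\Div_{g_t}\tilde H_t\,{\rm d}\vol_{g_t}.
\]

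Finally I would differentiate at $t=0$. The case-$g_t$ argument applied separately to $H_t$ and $\tilde H_t$ (each equal to its $t=0$ value near $\partial\Omega$) gives $a'(0)=b'(0)=0$, while $a(0)={\rm Vol}(\Omega,g)\,(\Div H)(\Omega,g)$ and $b(0)={\rm Vol}(\Omega,g)\,(\Div\tilde H)(\Omega,g)$. With $\phi_0=1$ and $\phi_0'=-\tfrac1p(\tr_g B)(\Omega,g)$ from \eqref{E-2vols}, the product rule yields $\bar I'(0)=\phi_0'\big[(\tfrac{p}2-1)a(0)+\tfrac{p}2 b(0)\big]$. Substituting and using $(\tr_g B)(\Omega,g)\,{\rm Vol}(\Omega,g)=\int_\Omega\tr_g B\,{\rm d}\vol_g$, the bracket simplifies via $-\tfrac1p\big[(\tfrac{p}2-1)\Div H+\tfrac{p}2\Div\tilde H\big]=\tfrac12\Div\big(\tfrac{2-p}p H-\tilde H\big)$ (as $g$-mean values over $\Omega$), giving exactly the stated right-hand side. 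The only genuinely delicate points are the two conformal transformation rules for $H,\tilde H$ and the vanishing of $a'(0),b'(0)$.
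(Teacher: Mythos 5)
Your proof is correct, and it reaches the stated formula by a slightly different organization than the paper's. Both arguments rest on the same ingredients: compact support of $B$ for the $g_t$ case, and for $\bar g_t$ the transformation rules $H_{\bar g}=\phi^{-1}H$, $\tilde H_{\bar g}=\tilde H$, the identity $\Div_{\bar g}X=\Div_g X$ from Lemma~\ref{L-div}, the volume scaling \eqref{e:Conf-Vol}, and $\phi_0'=-\frac1p(\tr_g B)(\Omega,g)$ from \eqref{E-2vols}. The paper, however, differentiates term by term: it computes $\dt(\Div H_{\bar g})$, $\dt(\Div\tilde H_{\bar g})$ and $\dt({\rm d}\vol_{\bar g})$ separately as the corresponding unbarred derivatives plus corrections proportional to $(\tr_g B)(\Omega,g)$, and then invokes the $g_t$ case to kill the unbarred part. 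Your factorization $\bar I(t)=\phi_t^{\,p/2-1}a(t)+\phi_t^{\,p/2}b(t)$ with $a,b$ \emph{constant} in $t$ packages the same corrections as derivatives of explicit powers of $\phi_t$, which makes the structure more transparent (the entire $t$-dependence of $\bar I$ sits in $\phi_t$) and strengthens the paper's $a'(0)=b'(0)=0$ to actual constancy via the locality of $H_t,\tilde H_t$ near $\partial\Omega$. Your parenthetical divergence-theorem version of the $g_t$ case is exactly the paper's argument and is preferable to the Stokes/boundary-integral phrasing, since it requires no regularity of $\partial\Omega$. The only point you flag as an obstacle -- the conformal transformation of the mean curvatures -- is indeed the one fact the paper does not prove either, citing \cite{bdrs}; it follows from \eqref{eq2G} or a Koszul computation with $\phi$ spatially constant, so your reliance on it is on the same footing as the paper's. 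The arithmetic $-\frac1p\big[(\frac p2-1)\Div H+\frac p2\Div\tilde H\big]=\frac12\Div\big(\frac{2-p}{p}H-\tilde H\big)$ checks out and agrees with the paper's coefficient $\frac1p-\frac12=\frac{2-p}{2p}$.
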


\proof Using the equations for time derivatives of mean curvatures and the volume form, we~get
\begin{eqnarray*}
&& {\rm\frac{d}{dt}}\int_{\Omega} \Div( H + \tilde H) \, {\rm d}\vol_g = \int_{\Omega} \dt\, (\Div(H + \tilde H)) \, {\rm d}\vol_g
 +\int_{\Omega} \Div(H + \tilde H) \, \dt\, ({\rm d}\vol_g) \\
 && = -\int_{\Omega}\Div({B}^\sharp(H))\,{\rm d}\vol_g
 + \int_{\Omega}\Div(-\widetilde\nabla(\tr {B}^\sharp))\,{\rm d}\vol_g \\
 && +\,\frac{1}{2}\int_{\Omega}\Big( \Div((\tr {B}^\sharp)(H + \tilde H))-\Div(H+\tilde H)(\tr {B}^\sharp)
 + (\tr {B}^\sharp) \Div(H + \tilde H) \Big)\, {\rm d}\vol_g \\
 && = \int_{\Omega} \Big(-\Div(\widetilde\nabla(\tr {B}^\sharp) )
 - \Div ( {B}^\sharp(H) ) + \frac{1}{2}\Div \big( (\tr {B}^\sharp) (H + \tilde H) \big) \Big)\, {\rm d}\vol_g = 0,
 \end{eqnarray*}
since all the above terms are integrals of divergences of vector fields supported in $\Omega$.

For $g^\perp$-variations preserving the volume of $\Omega$,
all the following derivatives with respect to $t$ will be calculated at $t=0$.
By Lemma~\ref{L-div} and using \eqref{E-2vols} we have
\[
 \dt(\Div H_{\bar{g}}) = \dt\,(\Div H)
 + \frac{1}{p}\,(\Div H)\cdot(\tr_g B)(\Omega,g)\,,
\]
while $\tilde H_{\bar{g}} = \tilde H$, see \cite{bdrs}, and hence $\dt(\Div\tilde  H_{\bar{g}}) = \dt\,(\Div\tilde  H)$.
We also have
\[
 \dt\,({\rm d}\vol_{\bar{g}}) = \dt\, ({\rm d}\vol_g)
 - \frac{1}{2}\,(\tr_g B)(\Omega,g)\,{\rm d}\vol_{\bar{g}}\,.
\]
Thus,
\begin{eqnarray*}
 {\rm\frac{d}{dt}}\int_{\Omega} \Div( H_{\bar{g}} + \tilde H_{\bar{g}})\,{\rm d}\vol_{\bar{g}}
 \eq \int_{\Omega} \dt\,(\Div( H + \tilde H) ) \, {\rm d}\vol_{g}
 +\int_{\Omega} \Div (H + \tilde H) \, \dt\,( {\rm d}\vol_{g})  \\
 \eq (\tr_g B)(\Omega,g)\int_{\Omega}\Div\Big(\frac{2-p}{2\,p}\,H -\frac{1}{2}\,{\tilde H}\Big)\,{\rm d}\vol_{g}.
 \qed
\end{eqnarray*}

\begin{prop}\label{P-1-5}
The
$g^\perp$-variations of metric for the action \eqref{E-Jmix} associated with $\bar g_t$ and $g_t$ are related~by
\begin{eqnarray}\label{E-varJh-f}
 {\rm\frac{d}{dt}}\,J_{\rm mix,\Omega}(\bar{g}_t)_{\,|\,t=0}
 ={\rm\frac{d}{dt}}\,J_{\rm mix,\Omega}({g}_t)_{\,|\,t=0}
 - \frac12\,\Sm^*_{\rm mix}(\Omega,g) \int_{\Omega} \big(\tr_{g} {B} \big)\,{\rm d}\vol_g \,,
\end{eqnarray}
where
\begin{equation}\label{E-S-star}
 \Sm^*_{\rm mix} = \Sm_{\rm mix} -
 \frac2p\,(\Sm_{\,\rm ex} +2\,\<\tilde T,\tilde T\> -\<T,T\> + \Div H).
\end{equation}
\end{prop}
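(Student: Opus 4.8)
The plan is to reduce the identity to the single directional derivative $D_g J_{\rm mix,\Omega}(g^\perp)$ by using linearity of the differential, and then to evaluate that derivative by means of \eqref{eq-ran-ex} together with the variation formulas of Lemmas~\ref{L-tildeHh}, \ref{L-H2h2-D} and \ref{L-div}. First I would identify the infinitesimal variation $\bar{B}:=\dt\bar g_t\,|_{\,t=0}$. Differentiating $\bar g_t=\phi_t g^\perp_t+\tilde g$ and using $\phi_0=1$ together with \eqref{E-2vols}, which gives $\phi_0'=-\frac1p\,(\tr_g B)(\Omega,g)$, one obtains $\bar B=B-\frac1p\,(\tr_g B)(\Omega,g)\,g^\perp$. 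Since $J_{\rm mix,\Omega}$ is a smooth functional on the open set ${\rm Riem}(M,\widetilde{\cal D},{\cal D})$ of an affine space modelled on $\mathfrak M$, its differential $D_g J_{\rm mix,\Omega}$ is a linear functional on $\mathfrak M$ and $\frac{d}{dt}J_{\rm mix,\Omega}(\bar g_t)_{\,|\,t=0}=D_g J_{\rm mix,\Omega}(\bar B)$, likewise for $g_t$. Hence, by linearity, $\frac{d}{dt}J_{\rm mix,\Omega}(\bar g_t)_{\,|\,t=0}=\frac{d}{dt}J_{\rm mix,\Omega}(g_t)_{\,|\,t=0}-\frac1p\,(\tr_g B)(\Omega,g)\,D_g J_{\rm mix,\Omega}(g^\perp)$; comparing with \eqref{E-varJh-f} and the definition of the mean value, it remains only to prove that $D_g J_{\rm mix,\Omega}(g^\perp)=\frac p2\int_\Omega \Sm^*_{\rm mix}\,{\rm d}\vol_g$.

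To compute $D_g J_{\rm mix,\Omega}(g^\perp)$, I take the $g^\perp$-variation with $B=g^\perp$ (e.g. $g_t=(1+t)g^\perp+\tilde g$) and apply \eqref{E-dtdvol}. As $\tr_g g^\perp=p$, this gives $D_g J_{\rm mix,\Omega}(g^\perp)=\int_\Omega \dt\Sm_{\rm mix}\,{\rm d}\vol_g+\frac p2\int_\Omega \Sm_{\rm mix}\,{\rm d}\vol_g$. I then split $\Sm_{\rm mix}$ via \eqref{eq-ran-ex} and differentiate each summand by Lemma~\ref{L-H2h2-D}. The substitution $B=g^\perp$ simplifies everything: $\tr B^\sharp=p$ is constant, so all gradient terms vanish, while the pairings collapse to traces, namely $\<\Phi_h,g^\perp\>=\Sm_{\,\rm ex}$, $\<\Phi_T,g^\perp\>=-\<T,T\>$ and $\<2\,\widetilde{\cal T}^\flat,g^\perp\>=2\tr\widetilde{\cal T}=-2\<\tilde T,\tilde T\>$, since $\Phi_h,\Phi_T,\widetilde{\cal T}^\flat$ are supported on ${\cal D}$. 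Consequently $\dt\Sm_{\,\rm ex}=-\Sm_{\,\rm ex}$, $\dt\<T,T\>=\<T,T\>$ and $\dt\<\tilde T,\tilde T\>=-2\<\tilde T,\tilde T\>$.

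The crucial point, which I regard as the main obstacle, is that the terms built from the mean curvatures must contribute exactly $-\Div H$ and nothing more. Because the direction $g^\perp$ is not supported in $\Omega$, one cannot discard divergences through the Divergence Theorem, so it is essential that the relevant terms cancel identically rather than merely integrate to zero. Here I would use $\<\tilde h,g^\perp\>=\tilde H$, the vanishing of $\tr\widetilde{\cal K}$ (Remark~\ref{Phihzero}), and the identity $\sum\nolimits_i\epsilon_i(\Div\tilde h)({\cal E}_i,{\cal E}_i)=\Div\tilde H$, which follows from the tilde-analogue of $\tr_g(\Div h)=\Div H$ together with $(\Div\tilde h)(E_a,E_a)=0$ (as $\tilde h$ vanishes whenever an argument lies in $\widetilde{\cal D}$). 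Feeding these into Lemma~\ref{L-H2h2-D} yields $\dt\widetilde\Sm_{\,\rm ex}=-\Div\tilde H+\Div\tilde H=0$ pointwise; meanwhile Lemma~\ref{L-div} with $\dt H=-B^\sharp(H)=-H$ and $\dt\tilde H=0$ (Lemma~\ref{L-tildeHh}) gives $\dt\Div(H+\tilde H)=-\Div H$.

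Collecting these contributions gives $\int_\Omega \dt\Sm_{\rm mix}\,{\rm d}\vol_g=-\int_\Omega\big(\Sm_{\,\rm ex}+2\<\tilde T,\tilde T\>-\<T,T\>+\Div H\big)\,{\rm d}\vol_g$, whence by \eqref{E-S-star}
\[
 D_g J_{\rm mix,\Omega}(g^\perp)=\frac p2\int_\Omega\Big(\Sm_{\rm mix}-\tfrac2p\big(\Sm_{\,\rm ex}+2\<\tilde T,\tilde T\>-\<T,T\>+\Div H\big)\Big)\,{\rm d}\vol_g=\frac p2\int_\Omega\Sm^*_{\rm mix}\,{\rm d}\vol_g.
\]
Substituting this into the reduction of the first paragraph and rewriting $\frac1p\,(\tr_g B)(\Omega,g)\cdot\frac p2\int_\Omega\Sm^*_{\rm mix}\,{\rm d}\vol_g=\frac12\,\Sm^*_{\rm mix}(\Omega,g)\int_\Omega(\tr_g B)\,{\rm d}\vol_g$ through the definition $f(\Omega,g)={\rm Vol}^{-1}(\Omega,g)\int_\Omega f\,{\rm d}\vol_g$ of the mean value yields exactly \eqref{E-varJh-f}, completing the argument.
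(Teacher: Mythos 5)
Your argument is correct, and it takes a genuinely different route from the paper's. You exploit linearity of the first variation: from $\bar B:=\dt\bar g_t\,|_{\,t=0}=B-\frac1p\,(\tr_g B)(\Omega,g)\,g^\perp$ the whole proposition collapses to the single identity $D_gJ_{\rm mix,\Omega}(g^\perp)=\frac p2\int_\Omega\Sm^*_{\rm mix}\,{\rm d}\vol_g$, which you obtain by substituting $B=g^\perp$ into the pointwise variation formulas of Lemmas~\ref{L-tildeHh}, \ref{L-H2h2-D} and \ref{L-div}. The paper instead splits $\Sm_{\rm mix}=Q(g)+\Div(H+\tilde H)$ via \eqref{eq-ran-ex}, handles the divergence term at the integral level through the dedicated Lemma~\ref{divH+tildeH}, and compares $Q(\bar g_t)$ with $Q(g_t)$ using the exact finite-$t$ scaling laws of $h,H,T,\tilde T$ under the constant ${\cal D}$-conformal rescaling by $\phi_t$ (quoted from \cite{bdrs}), differentiating only at the end. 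Your route is more self-contained and conceptually cleaner — the correction term is literally the derivative of $J_{\rm mix,\Omega}$ in the non-compactly-supported direction $g^\perp$ — but its burden is exactly the point you flag: since the Divergence Theorem is unavailable for that direction, the divergence contributions must cancel pointwise. They do: with $B=g^\perp$ one gets $\dt\widetilde\Sm_{\,\rm ex}=p\,\Div\tilde H-\Div\tilde H+(1-p)\Div\tilde H=0$ (your ``$-\Div\tilde H+\Div\tilde H$'' is a slight under-count of the terms, but the sum is indeed zero, using $(\Div\tilde h)(E_a,E_b)=0$, $\tr\tilde{\cal K}=0$ and $\<\tilde h,g^\perp\>=\tilde H$) and $\dt\,\Div(H+\tilde H)=-\Div H$. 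The paper's route proves slightly more (an exact relation between $Q(\bar g_t)$ and $Q(g_t)$ for all $t$) at the price of importing the conformal scaling formulas; both yield \eqref{E-S-star} identically.
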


\proof Let us fix a $g^\perp$-variation $g_t$, see \eqref{e:Var}$_1$.
 By \eqref{eq-ran-ex} and Lemma \ref{divH+tildeH}, we have
\begin{equation*}
 {\rm\frac{d}{dt}}\,J_{\rm mix,\Omega}(g_t)
 = {\rm\frac{d}{dt}} \int_{\Omega} Q(g_t)\ {\rm d}\vol_{g_t}
 \,,
\end{equation*}
where $Q(g):=\Sm_{\rm mix} -\Div(H+\tilde H)$ is represented using \eqref{eq-ran-ex} as
\begin{equation}\label{E-Q-def}
 Q(g) = \Sm_{\,\rm ex}(g) +\widetilde\Sm_{\,\rm ex}(g) +\<\tilde T,\tilde T\>_{g} +\<T,T\>_{g}\,.
\end{equation}
For $\bar g_t = \phi_t g_t + \tilde g$, see \eqref{e:Var-Bar}, we have, see \cite{bdrs},
\begin{eqnarray*}
 && H_{\bar g} = \phi^{-1} H , \quad
 {\tilde H}_{\bar g} = {\tilde H}, \quad
 h_{\bar g} = \phi^{-1} h , \quad
 {\tilde h}_{\bar g} = \phi\,\tilde h, \\
 &&\<T,T\>_{\bar g}=\phi\,\<T,T\>_{g},\quad
 \<h_{\bar g},h_{\bar g}\>_{\bar g}=\phi^{-1}\<h,h\>_{g},\quad
 \<\tilde h_{\bar g},\tilde h_{\bar g}\>_{\bar g}=\<\tilde h,\tilde h\>_{g},\\
 && \<\tilde T,\tilde T\>_{\bar g}=\phi^{-2}\<\tilde T,\tilde T\>_{g},\quad
 {\bar g}(H_{\bar g},H_{\bar g})=\phi^{-1} g(H,H),\quad
 {\bar g}(\tilde H_{\bar g},\tilde H_{\bar g})= g(\tilde H,\tilde H),
\end{eqnarray*}
where subscript $\bar g$ corresponds to geometric quantities calculated with respect to $\bar g$.
Hence,
\[
 Q(\bar g_t) = Q(g_t) +(\phi^{-1}_t-1)\,\Sm_{\,\rm ex}(g_t) +(\phi^{-2}_t-1)\,\<\tilde T,\tilde T\>_{g_t} +(\phi_t-1)\,\<T,T\>_{g_t}\,.
\]
Differentiating the above at $t=0$ and using $\phi_0=1$, we get
\begin{eqnarray*}
 \dt Q(\bar g_t)_{|\,t=0} \eq \dt Q(g_t)_{|\,t=0}
 -\phi^{\,\prime}_0\,\big(\Sm_{\,\rm ex}(g) +2\,\<\tilde T,\tilde T\>_{g} -\<T,T\>_{g}\big)\,,
\end{eqnarray*}
where $\phi^{\,\prime}_0=-\frac{1}p\,(\tr_g B)(\Omega,g)$, see \eqref{E-2vols}.
Using Lemma~\ref{divH+tildeH} we obtain
\begin{eqnarray}\label{E-varJh-init}
\nonumber
 {\rm\frac{d}{dt}}\,J_{\rm mix,\Omega} ({g}_t)_{\,|\,t=0}
 \eq \int_{\Omega}\!\Big\{\dt Q(g_t)_{|\,t=0} +\frac12\,Q(g)\tr_{g} {B}\Big\}\,{\rm d}\vol_g\,,\\
 {\rm\frac{d}{dt}}\,J_{\rm mix,\Omega} (\bar{g}_t)_{\,|\,t=0}
\nonumber
  \eq \int_{\Omega}\!\Big\{\dt Q(\bar g_t)_{|\,t=0} +\frac12\,Q(g)(\tr_{g} {B} + p\,\phi^{\,\prime}_0)\Big\}\,{\rm d}\vol_g\\
 \plus{\rm\frac{d}{dt}}\int_{\Omega} \Div (H_{\bar g_t} + \tilde H_{\bar g_t})\,{\rm d}\vol_{\bar g_t}\,\!_{|\,t=0}.
\end{eqnarray}
Hence,
\begin{eqnarray*}
 &&{\rm\frac{d}{dt}}\,J_{\rm mix,\Omega}(\bar{g}_t)_{\,|\,t=0}
 =\int_{\Omega}\Big\{\,\dt Q(\bar g_t)_{\,|\,t=0}
 +\frac12\,Q(g)\big(\tr_{g}{B} -(\tr_g B)(\Omega,g)\big)\Big\}\,{\rm d}\vol_g \\
 &&\hskip-7mm +\,{\rm\frac{d}{dt}}\int_{\Omega} \Div (H_{\bar g_t} + \tilde H_{\bar g_t})\,{\rm d}\vol_{\bar g_t}\,\!_{|\,t=0}
 =\int_{\Omega}\dt Q(g_t)_{\,|\,t=0}\,{\rm d}\vol_g +\frac12\int_{\Omega} Q(g)(\,\tr_{g} {B})\,{\rm d}\vol_g \\
 &&\hskip-7mm +\,\frac{1}{2}\Big(\frac2p\,(\Sm_{\rm\,ex} +2\<\tilde T,\tilde T\>_{g} -\<T,T\>_{g} +\Div H)
 -Q(g) -\Div(H +\tilde{H})\Big)({\Omega},g)\int_{\Omega}(\tr_{g} {B})\,{\rm d}\vol_g.
\end{eqnarray*}
Using definition of $Q(g)$ and
\eqref{E-S-star} we get \eqref{E-varJh-f}.
\qed

\begin{rem}\rm
It should be stressed that as in \cite{bdr}, we work with two types of variations of metric, (\ref{e:Var}) and (\ref{e:Var-Bar});
the second of which preserves the volume of ${\Omega}$.
Formulas containing $\Sm^*_{\rm mix}$ correspond to \eqref{e:Var-Bar}.
To obtain similar formulas, corresponding to $1$-parameter variations of the form (\ref{e:Var}),
one should merely delete the mean value terms $\Sm^*_{\rm mix}(\Omega,g)$ in the previous~identities.
Considering a closed manifold $M$ instead of $\Omega$, from the Divergence Theorem we have
\begin{equation*}
 \Sm^*_{\rm mix} = \Sm_{\rm mix} -
  \frac2p\,\big(\Sm_{\,\rm ex} +2\,\<\tilde T,\tilde T\> -\<T,T\> \big) \quad
  {\rm (for}~g^\perp\mbox{\rm-variations})\,.
\end{equation*}
\end{rem}


Next theorem gives the Euler-Lagrange equations of the variational principle $\delta J_{\rm mix,\Omega}(g)=0$
on a relatively compact domain $\Omega$ of a manifold $M$ with an almost-product structure.
These have a view $P=\lambda\,\tilde g$ (on $\widetilde{\cal D}$) and $P=\lambda\,g^\perp$ (on ${\cal D}$)
for certain tensors $P$ and functions $\lambda$.

\begin{thm}[\bf Euler-Lagrange equations]\label{T-main00}
Let
$g\in{\rm Riem}(M,\,\widetilde{\cal D},\,{\cal D})$ be a critical point of the action \eqref{E-Jmix}
with respect to
$g^\perp$-variations, \eqref{E-Sdtg}--\,\eqref{E-Sdtg-2}.
Then
\begin{eqnarray}\label{E-main-0i}
\nonumber
 &&\hskip-7mm {r}_{\cal D} -\<\tilde h,\,\tilde H\> +\widetilde{\cal A}^\flat -\widetilde{\cal T}^\flat
 +\Phi_h +\Phi_T +\Psi -{\rm Def}_{\cal D}\,H +\tilde{\cal K}^\flat \\
 &&=\frac12\,\big( \Sm_{\rm mix} - \Sm^*_{\rm mix}(\Omega,g) + \Div(\tilde H - H)\big)\,g^\perp.
\end{eqnarray}
\end{thm}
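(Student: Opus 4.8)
The plan is to compute the first variation of $J_{\rm mix,\Omega}$ along the volume-preserving family $\bar g_t$ of \eqref{e:Var-Bar}, to express it as $\int_{\Omega}\<S,\,{B}\>\,{\rm d}\vol_g$ for a single symmetric tensor $S\in{\mathfrak M}_{\cal D}$, and then to invoke the fundamental lemma of the calculus of variations: since ${B}\in{\mathfrak M}_{\cal D}$ ranges over all adapted tensors supported in $\Omega$ and every summand of $S$ already lies in ${\mathfrak M}_{\cal D}$, criticality forces $S=0$. The concluding step is to rewrite $S=0$ by means of the partial Ricci identity \eqref{E-genRicN}$_1$, which trades the term $\Div\tilde h$ for $r_{\cal D}$ and the configuration tensors.

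First I would treat a fixed $g^\perp$-variation $g_t$. Writing $\Sm_{\rm mix}=Q(g)+\Div(H+\tilde H)$ with $Q$ as in \eqref{E-Q-def}, Lemma~\ref{divH+tildeH} annihilates the contribution of the divergence term for $g_t$, so together with \eqref{E-dtdvol} one gets
\[
 \frac{d}{dt}J_{\rm mix,\Omega}(g_t)_{\,|\,t=0}=\int_{\Omega}\Big(\dt Q(g_t)_{\,|\,t=0}+\tfrac12\,Q(g)\,\tr_g{B}\Big)\,{\rm d}\vol_g.
\]
Differentiating \eqref{E-Q-def} via Lemma~\ref{L-H2h2-D} (formulas \eqref{E-h2-D1}, \eqref{E-h2T2-D1}, \eqref{E-T2-D1}) yields
\[
 \dt Q(g_t)_{\,|\,t=0}=\<-\Phi_h-\Phi_T+(\Div\tilde H)\,g^\perp-\Div\tilde h-\tilde{\cal K}^\flat+2\,\widetilde{\cal T}^\flat,\ {B}\>+\Div\big(\<\tilde h,{B}\>-(\tr_g{B})\tilde H\big).
\]
Because ${B}\in{\mathfrak M}_{\cal D}$ is supported in $\Omega$ and $\<\tilde h,{B}\>-(\tr_g{B})\tilde H$ is a $\widetilde{\cal D}$-valued field, its divergence integrates to zero by the Divergence Theorem. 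Using $\tr_g{B}=\<g^\perp+\tilde g,{B}\>=\<g^\perp,{B}\>$ (as ${B}$ vanishes on $\widetilde{\cal D}$), I would collect everything into $\int_{\Omega}\<S_0,{B}\>\,{\rm d}\vol_g$ with
\[
 S_0=-\Phi_h-\Phi_T+(\Div\tilde H)\,g^\perp-\Div\tilde h-\tilde{\cal K}^\flat+2\,\widetilde{\cal T}^\flat+\tfrac12\,Q(g)\,g^\perp.
\]

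Next I would pass to the volume-preserving family through Proposition~\ref{P-1-5}: by \eqref{E-varJh-f} and $\int_{\Omega}\tr_g{B}\,{\rm d}\vol_g=\int_{\Omega}\<g^\perp,{B}\>\,{\rm d}\vol_g$, the first variation for $\bar g_t$ equals $\int_{\Omega}\<S_0-\tfrac12\Sm^*_{\rm mix}(\Omega,g)\,g^\perp,\ {B}\>\,{\rm d}\vol_g$. Setting this to zero for all ${B}\in{\mathfrak M}_{\cal D}$ and applying the fundamental lemma gives $S_0-\tfrac12\Sm^*_{\rm mix}(\Omega,g)\,g^\perp=0$, since each tensor occurring ($\Phi_h,\Phi_T,r_{\cal D},\widetilde{\cal A}^\flat,\widetilde{\cal T}^\flat,\Psi,{\rm Def}_{\cal D}H,\tilde{\cal K}^\flat$) lies in ${\mathfrak M}_{\cal D}$.

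Finally I would substitute $\Div\tilde h=r_{\cal D}-\<\tilde h,\tilde H\>+\widetilde{\cal A}^\flat+\widetilde{\cal T}^\flat+\Psi-{\rm Def}_{\cal D}H$ from \eqref{E-genRicN}$_1$, so that the $\widetilde{\cal T}^\flat$ contributions combine as $-\widetilde{\cal T}^\flat+2\,\widetilde{\cal T}^\flat=\widetilde{\cal T}^\flat$; multiplying through by $-1$ reproduces the left-hand side of \eqref{E-main-0i} exactly. For the scalar factor of $g^\perp$ I would use $Q(g)=\Sm_{\rm mix}-\Div(H+\tilde H)$ to rewrite $\Div\tilde H+\tfrac12 Q(g)-\tfrac12\Sm^*_{\rm mix}(\Omega,g)$ as $\tfrac12\big(\Sm_{\rm mix}-\Sm^*_{\rm mix}(\Omega,g)+\Div(\tilde H-H)\big)$, matching \eqref{E-main-0i}. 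I expect the only delicate points to be sign-tracking through the $\widetilde{\cal T}^\flat$ and $\tilde{\cal K}^\flat$ terms and verifying that every tensor indeed belongs to ${\mathfrak M}_{\cal D}$, so that the fundamental lemma delivers the full tensorial identity rather than merely its ${\cal D}$-part.
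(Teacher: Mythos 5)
Your argument is correct and follows essentially the same route as the paper's proof: the same splitting $\Sm_{\rm mix}=Q(g)+\Div(H+\tilde H)$, the same use of Lemmas~\ref{L-H2h2-D} and~\ref{divH+tildeH} and Proposition~\ref{P-1-5} to reach the intermediate identity \eqref{E-main-0ih-temp}, and the same final substitution of $\Div\tilde h$ via \eqref{E-genRicN}$_1$. Your sign bookkeeping for the $\widetilde{\cal T}^\flat$ and $\tilde{\cal K}^\flat$ terms and the reduction of the scalar coefficient to $\frac12\big(\Sm_{\rm mix}-\Sm^*_{\rm mix}(\Omega,g)+\Div(\tilde H-H)\big)$ both check out.
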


\begin{proof}
Applying Lemma~\ref{L-H2h2-D} to \eqref{E-Q-def}, using \eqref{E-divP} and removing integrals of divergences of vector fields compactly supported in $\Omega$, we get
\begin{equation*}
 \int_{\Omega} \dt Q_{\,|\,t=0}\,{\rm d}\vol_g = \int_{\Omega} \big\<
 \Div(\tilde H\,g^\perp - \tilde h) +2\,\widetilde{\cal T}^\flat -\Phi_h -\Phi_T -\tilde{\cal K}^\flat,\ {B}\big>\,{\rm d}\vol_g,
\end{equation*}
where ${B}=\{\dt g_t\}_{\,|\,t=0}\in{\mathfrak M}_{\cal D}$.
Notice that $\tr_{g} {B}=\<{B},\,g^\perp\>$. Then by \eqref{E-varJh-init} we have
\begin{eqnarray}\label{E-varJh-init2}
\nonumber
 {\rm\frac{d}{dt}}\,J_{\rm mix,\Omega}({g}_t)_{\,|\,t=0} \eq \int_{\Omega}
 \big\< \Div(\tilde H\,g^\perp - \tilde h) +2\,\widetilde{\cal T}^\flat -\Phi_h -\Phi_T -\tilde{\cal K}^\flat\\
 \plus\frac12\,\big(\Sm_{\rm mix} -\Div(H + \tilde H)\big)\,g^\perp,\ {B}\big>\,{\rm d}\vol_g\,.
\end{eqnarray}
By \eqref{E-varJh-init2} and Proposition~\ref{P-1-5} we obtain
\begin{eqnarray}\label{E-Jmix-dt-fin}
\nonumber
 {\rm\frac{d}{dt}}\,J_{\rm mix,\Omega}(\bar g_t)_{\,|\,t=0} \eq \!\int_{\Omega}\big\<
 \Div(\tilde H\,g^\perp - \tilde h) +2\,\widetilde{\cal T}^\flat-\Phi_h-\Phi_T -\tilde{\cal K}^\flat\\
 \plus\frac12\,\big(\Sm_{\rm mix} -\Sm^*_{\rm mix}(\Omega,g) -\Div(\tilde H +H) \big)\,g^\perp,\ {B}\big\>\,{\rm d}\vol_g.
\end{eqnarray}
If the metric $g$ is critical for the action $J_{\rm mix,\Omega}$ with respect to $g^\perp$-variations,
then the integral in \eqref{E-Jmix-dt-fin} is zero for arbitrary symmetric tensor ${B}\in{\mathfrak M}$ vanishing on $\widetilde{\cal D}$.
That yields
\begin{equation}\label{E-main-0ih-temp}
 \Div\tilde h -2\,\widetilde{\cal T}^\flat +\Phi_h +\Phi_T +\tilde{\cal K}^\flat
 =\frac12\,\big(\Sm_{\rm mix} -\Sm^*_{\rm mix}(\Omega,g)
 +\Div(\tilde H - H) \big)\,g^\perp.
\end{equation}
Using the partial Ricci tensor, see Proposition~\ref{L-CC-riccati},
and replacing ${\Div}\,\tilde h$ in \eqref{E-main-0ih-temp} by the value according to \eqref{E-genRicN}$_1$,
we rewrite \eqref{E-main-0ih-temp} as \eqref{E-main-0i}.
\end{proof}

\begin{rem}\rm
By~\eqref{E-main-0ih-temp} we conclude the following:
if a metric $g\in{\rm Riem(M,{\cal D},\widetilde{\cal D})}$ is critical with respect to $g^\perp$-variations
then the tensor $\Div\tilde h-2\,\widetilde{\cal T}^\flat+\Phi_{h}+\Phi_T+\tilde{\cal K}^\flat$ is ${\cal D}$-conformal.
\end{rem}

\begin{example}\label{Ex-Hopf}\rm
Let both distributions be totally geodesic. Then \eqref{E-main-0i} reads
\begin{eqnarray*}
  {r}_{\cal D} -\widetilde{\cal T}^\flat +\Phi_T +\Psi \eq
  \frac12\,\big( \Sm_{\rm mix} - \Sm^*_{\rm mix}(\Omega,g)\big)\,g^\perp,
\end{eqnarray*}
where
 $\Psi(X,Y) = \tr_{g}(T^\sharp_Y T^\sharp_X)\ (X,Y\in{\mathfrak X}_{\cal D})$.
 Also $\Sm_{\rm mix}=\<T,T\> + \<\tilde T,\tilde T\>$ and, see \eqref{E-S-star},
 \begin{equation*}
 \Sm^*_{\rm mix} = \frac1p\,\big((p-4)\,\<\tilde T,\tilde T\> +(p+2)\,\<T,T\> \big)
 \,.
\end{equation*}
Note that this is the case of Hopf fibrations, when $\widetilde{\cal D}$ is non-integrable, totally geodesic distribution with integrable orthogonal complement.
\end{example}

\subsection{Total extrinsic scalar curvature}

The variational formulas from Section \ref{sec:prel} can be applied also to other functionals depending on extrinsic geometry of distributions. In particular, we can consider integrals of extrinsic scalar curvatures $\widetilde{\Sm}_{\,\rm ex}$ and ${\Sm}_{\,\rm ex}$.
Since the variational formulas for both these quantities are similar,
we shall examine only $\widetilde{\Sm}_{\,\rm ex}$.
We consider adapted variations of the functional
\begin{equation}\label{Jwidetildeex}
 J_{\rm \widetilde{ex},\Omega}(g):\ g \rightarrow \int_{\Omega} \widetilde{\Sm}_{\,\rm ex}(g)\ {\rm d}\vol_g\,.
\end{equation}
Note that for $p=1$ we have $\widetilde{\Sm}_{\,\rm ex}=0$ for any metric.

\begin{prop}[\bf Euler-Lagrange equations]
Let $g\in{\rm Riem}(M,\,{T\calf},\,{\cal D})$ be critical for the action \eqref{Jwidetildeex}
with respect to adapted variations of $g$ and let $p>1$. Then
\begin{eqnarray}\label{CorEwidetildeex-main-0i}
&& \Div\tilde h +\tilde{\cal K}^\flat
 = -\frac{1}{2(p-1)}\,(\widetilde{\Sm}_{\,\rm ex} - \widetilde{\Sm}\,_{\rm ex}^{*}(\Omega, g)) \,g^{\perp},
 \quad ({\rm for}~g^\perp\mbox{\rm-variations}),\\
&& \Phi_{\tilde h} = \frac{1}{n}\,\widetilde{\Sm}_{\,\rm ex}\,{\tilde g},
 \ \ \mbox{and if } \ n\ne2 \  \mbox{then} \  \widetilde\Sm_{\,\rm ex}= \widetilde{\Sm}\,_{\rm ex}^{*}(\Omega, g)\quad
 ({\rm for}~\tilde g\mbox{\rm-variations})  , \label{CorEwidetildeex-main-0i-dual}
\end{eqnarray}
where $\widetilde{\Sm}\,_{\rm ex}^{*} = \widetilde{\Sm}_{\,\rm ex}$ for variations preserving the volume of $\Omega$ and $\widetilde{\Sm}\,_{\rm ex}^{*}=0$ otherwise.
\end{prop}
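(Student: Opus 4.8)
The plan is to treat the two families of adapted variations separately, in each case computing the first variation of $J_{\rm\widetilde{ex},\Omega}$ from the evolution of $\widetilde{\Sm}_{\,\rm ex}$ supplied by Lemma~\ref{L-H2h2-D}, discarding integrals of divergences (all variations are supported in $\Omega$), and then passing to the volume-preserving case via the rescaling device $\bar g_t$ of \eqref{e:Var-Bar}. For $g^\perp$-variations I would start from $\frac{d}{dt}J_{\rm\widetilde{ex},\Omega}(g_t) = \int_{\Omega}\big(\dt\widetilde{\Sm}_{\,\rm ex} + \tfrac12\,\widetilde{\Sm}_{\,\rm ex}\,\tr_g{B}\big)\,{\rm d}\vol_g$, using \eqref{E-dtdvol}, and insert \eqref{E-h2T2-D1}; the term $\Div(\<\tilde h,{B}\> - (\tr_g{B})\tilde H)$ integrates to zero. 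Since $\tr_g{B} = \<g^\perp,{B}\>$ for ${B}\in{\mathfrak M}_{\cal D}$, this collects into $\int_{\Omega}\big\<(\Div\tilde H + \tfrac12\widetilde{\Sm}_{\,\rm ex})g^\perp - \Div\tilde h - \tilde{\cal K}^\flat,\ {B}\big\>\,{\rm d}\vol_g$.

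For the volume-preserving family $\bar g_t$ I would use that $\widetilde{\Sm}_{\,\rm ex} = g(\tilde H,\tilde H) - \<\tilde h,\tilde h\>$ is invariant under the constant-scale ${\cal D}$-conformal change, by the rules $\<\tilde h_{\bar g},\tilde h_{\bar g}\>_{\bar g} = \<\tilde h,\tilde h\>_g$ and $\bar g(\tilde H,\tilde H) = g(\tilde H,\tilde H)$ recorded inside the proof of Proposition~\ref{P-1-5}; repeating the derivation of \eqref{E-varJh-f} then merely replaces $\widetilde{\Sm}_{\,\rm ex}$ by $\widetilde{\Sm}_{\,\rm ex} - \widetilde{\Sm}\,_{\rm ex}^{*}(\Omega,g)$ in the coefficient of $g^\perp$. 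Setting the result to zero for all ${B}\in{\mathfrak M}_{\cal D}$ gives
\[
 \Div\tilde h + \tilde{\cal K}^\flat = \big(\Div\tilde H + \tfrac12(\widetilde{\Sm}_{\,\rm ex} - \widetilde{\Sm}\,_{\rm ex}^{*}(\Omega,g))\big)\,g^\perp.
\]
Taking $\tr_g$ over ${\cal D}$ and using $\tr_g(\Div\tilde h) = \Div\tilde H$ (the $\widetilde{\cal D}\leftrightarrow{\cal D}$ dual of $\tr_g(\Div h)=\Div H$ from the Remark after Proposition~\ref{L-CC-riccati}), $\tr_g\tilde{\cal K}^\flat = 0$ (as $\tilde{\cal K}$ is a sum of commutators), and $\tr_g g^\perp = p$, I solve for $\Div\tilde H = -\frac{p}{2(p-1)}(\widetilde{\Sm}_{\,\rm ex} - \widetilde{\Sm}\,_{\rm ex}^{*})$; back-substitution yields the coefficient $\tfrac12(1 - \tfrac{p}{p-1}) = -\frac{1}{2(p-1)}$, hence \eqref{CorEwidetildeex-main-0i}.

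For $\tilde g$-variations I would invoke the $\widetilde{\cal D}\leftrightarrow{\cal D}$ symmetry of all definitions so that the tilde-analogue of \eqref{E-h2-D1} reads $\dt\widetilde{\Sm}_{\,\rm ex} = -\<\Phi_{\tilde h},{B}\>$ for ${B}\in{\mathfrak M}_{\widetilde{\cal D}}$, giving $\frac{d}{dt}J_{\rm\widetilde{ex},\Omega}(g_t) = \int_{\Omega}\<-\Phi_{\tilde h} + \tfrac12\widetilde{\Sm}_{\,\rm ex}\tilde g,{B}\>\,{\rm d}\vol_g$. The new feature is that $\widetilde{\Sm}_{\,\rm ex}$ is \emph{not} invariant under the constant-scale $\tilde g$-conformal change $\bar g = g^\perp + \psi\tilde g$: the dual scaling rules give $\widetilde{\Sm}_{\,\rm ex}(\bar g) = \psi^{-1}\widetilde{\Sm}_{\,\rm ex}(g)$ while ${\rm d}\vol_{\bar g} = \psi^{n/2}{\rm d}\vol_g$, so fixing the volume (with $\psi^\prime_0 = -\frac1n(\tr_g{B})(\Omega,g)$) produces the correction $-\frac{n-2}{2n}\widetilde{\Sm}_{\,\rm ex}(\Omega,g)\int_{\Omega}\<\tilde g,{B}\>\,{\rm d}\vol_g$. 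Vanishing of the first variation then gives $\Phi_{\tilde h} = \tfrac12\widetilde{\Sm}_{\,\rm ex}\tilde g - \frac{n-2}{2n}\widetilde{\Sm}\,_{\rm ex}^{*}(\Omega,g)\tilde g$; taking $\tr_g$ (with $\tr_g\Phi_{\tilde h} = \widetilde{\Sm}_{\,\rm ex}$ and $\tr_g\tilde g = n$) yields $\frac{2-n}{2}(\widetilde{\Sm}_{\,\rm ex} - \widetilde{\Sm}\,_{\rm ex}^{*}) = 0$, so $\widetilde{\Sm}_{\,\rm ex} = \widetilde{\Sm}\,_{\rm ex}^{*}(\Omega,g)$ when $n\ne2$, and substituting this collapses the coefficients to $\tfrac12 - \frac{n-2}{2n} = \frac1n$ (for $n=2$ one has $\frac1n=\frac12$ directly), giving \eqref{CorEwidetildeex-main-0i-dual}.

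The step I expect to be most delicate is the bookkeeping of the conformal and volume weights in the two volume-preserving reductions. In the $g^\perp$ case $\widetilde{\Sm}_{\,\rm ex}$ is conformally invariant, so the mean-value term enters additively and cleanly; in the $\tilde g$ case it scales like $\psi^{-1}$, and it is exactly the competition between this weight and the volume weight $\psi^{n/2}$ that forces the dichotomy at $n=2$ and fixes the coefficient $\frac1n$. One must also be careful to apply the trace identity $\tr_g(\Div\tilde h)=\Div\tilde H$ over the correct distribution, and to confirm that the tilde-analogues of \eqref{E-h2-D1} and of the scaling rules follow from the stated $\widetilde{\cal D}\leftrightarrow{\cal D}$ symmetry of all configuration tensors.
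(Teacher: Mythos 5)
Your proposal is correct and follows essentially the same route as the paper's proof: the first variation is assembled from Lemma~\ref{L-H2h2-D} and \eqref{E-dtdvol} with divergence terms discarded, the volume-preserving case is handled by the rescaling $\bar g_t$ exactly as in Proposition~\ref{P-1-5} (with the scaling weights $\phi^{-1}$ versus $\phi^{n/2}$ producing the $\frac{n-2}{2n}$ correction for $\tilde g$-variations), and the final coefficients come from tracing the Euler--Lagrange equations and back-substituting. One remark: your trace identity $(2-n)\big(\widetilde{\Sm}_{\,\rm ex}-\widetilde{\Sm}\,_{\rm ex}^{*}(\Omega,g)\big)=0$ is the one consistent with the stated conclusion $\Phi_{\tilde h}=\frac1n\,\widetilde{\Sm}_{\,\rm ex}\,\tilde g$, whereas the paper's displayed intermediate step \eqref{trEwidetildeex-main-0i}$_2$ carries an extraneous factor $\frac12$ on $\widetilde{\Sm}_{\,\rm ex}$ that appears to be a typo.
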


\begin{proof} 
The formula for $g^\perp$-variation of $\widetilde{\Sm}_{\,\rm ex}$ was given in \eqref{E-h2T2-D1},
and we can write the $\tilde g$-variation of $\widetilde{\Sm}_{\,\rm ex}$ from \eqref{E-h2-D1} as
\begin{equation}\label{dtwidetildeSm}
 \dt\,\widetilde{\Sm}_{\,\rm ex} = -\<\Phi_{\widetilde{h}},\,{B}\>,
\end{equation}
interchanging the roles of $\cal D$ and $\cal \widetilde{D}$.
 Using \eqref{E-h2T2-D1}, \eqref{E-dtdvol}, \eqref{dtwidetildeSm}, and removing divergences of compactly supported vector fields, we obtain for $g^\perp$-variations:
\begin{eqnarray*}
 {\rm\frac{d}{dt}}\,J_{\rm \widetilde{ex},\Omega}(g_t)_{\,|\,t=0}
 \eq \int_{\Omega} (\dt\widetilde{\Sm}_{\,\rm ex}) \ {\rm d}\vol_g
 +\int_{\Omega} \widetilde{\Sm}_{\,\rm ex}\,\dt({\rm d}\vol_g)\, \\
 \eq  \int_{\Omega} \<\,(\Div\tilde H)\,g^\perp -\Div\tilde h -\tilde{\cal K}^\flat,\, {B}\>\ {\rm d}\vol_g\,
 + \frac{1}{2} \int_{\Omega} \widetilde{\Sm}_{\,\rm ex} (\tr_{g_t} {B}_t) \,{\rm d}\vol_{g_t} \\
 \eq \int_{\Omega} \<\,(\Div\tilde H)\,g^\perp -\Div\tilde h -\tilde{\cal K}^\flat
 + \frac{1}{2}\,\widetilde{\Sm}_{\,\rm ex}\,g^{\perp} , \, {B}\>\,{\rm d}\vol_g,
\end{eqnarray*}
and for $\tilde g$-variations:
\begin{equation*}
 {\rm\frac{d}{dt}}\,J_{\widetilde{\rm ex},\Omega}(g_t)_{\,|\,t=0}
 =\int_{\Omega} \<-\Phi_{\tilde h} + \frac{1}{2}\,\widetilde{\Sm}_{\,\rm ex}\,{\tilde g},\, {B}\> \ {\rm d}\vol_g .
\end{equation*}
In the case of variations preserving the volume of
$\Omega$, using the notation of Section~\ref{subsec:EU-general} and methods employed in the proof of Proposition \ref{P-1-5}, we get for $g^\perp$-variations
\[
 {\rm\frac{d}{dt}}\,J_{\rm \widetilde{ex},\Omega}(\bar g_t)_{\,|\,t=0} ={\rm\frac{d}{dt}}\,J_{\rm \widetilde{ex},\Omega}(g_t)_{\,|\,t=0}
 -\frac{1}{2}\,\widetilde{\Sm}_{\,\rm ex}(\Omega, g) \int_{\Omega} \big(\tr_{g} {B}\big)\,{\rm d}\vol_g,
\]
and for $\tilde g\,$-variations:
\[
 {\rm\frac{d}{dt}}\,J_{\rm \widetilde{ex},\Omega}(\bar g_t)_{\,|\,t=0} = {\rm\frac{d}{dt}}\,J_{\rm \widetilde{ex},\Omega}(g_t)_{\,|\,t=0}
- \frac{n-2}{2n}\,\widetilde{\Sm}_{\,\rm ex}(\Omega, g) \int_{\Omega} \big(\tr_{g} {B}\big)\,{\rm d}\vol_g .
\]
Therefore, we obtain the following Euler-Lagrange equations for the action \eqref{Jwidetildeex}
(terms $\widetilde{\Sm}\,^{*}_{\rm ex}$ appear only in case of variations preserving the volume of $\Omega$):
\begin{eqnarray}\label{Ewidetildeex-main-0i}
 &&\hskip-7mm
 \Div\tilde h +\tilde{\cal K}^\flat
 = \big(\Div\tilde H + \frac{1}{2}\,( \widetilde{\Sm}_{\,\rm ex} - \widetilde{\Sm}\,_{\rm ex}^{*}(\Omega, g))\big)\,g^{\perp}
 \quad ({\rm for}~g^\perp\mbox{\rm-variations}),\\
 \label{Ewidetildeex-main-0i-dual}
 &&\hskip-7mm  \Phi_{\tilde h}
 =\frac{1}{2}\,\big(\widetilde{\Sm}_{\,\rm ex} -\frac{n-2}{n}\,\widetilde{\Sm}\,_{\rm ex}^{*}(\Omega, g)\big)\,{\tilde g}
  \quad ({\rm for}~\tilde g\mbox{\rm-variations}).
\end{eqnarray}
Taking traces of \eqref{Ewidetildeex-main-0i} and \eqref{Ewidetildeex-main-0i-dual} yields
\begin{equation}\label{trEwidetildeex-main-0i}
 \Div \tilde H = \frac{p}{2\,(1-p)}\,( \widetilde{\Sm}_{\,\rm ex} - \widetilde{\Sm}\,_{\rm ex}^{*}(\Omega, g)), \quad
 (n-2)\,( \frac{1}{2}\,\widetilde{\Sm}_{\,\rm ex} -  \widetilde{\Sm}\,_{\rm ex}^{*}(\Omega, g)) = 0.
\end{equation}
Using \eqref{trEwidetildeex-main-0i}$_1$ in \eqref{Ewidetildeex-main-0i} and \eqref{trEwidetildeex-main-0i}$_2$ in \eqref{Ewidetildeex-main-0i-dual} completes the proof.
\end{proof}


\section{Applications}
\label{sec:main}

In this section we assume that a pseudo-Riemannian manifold $(M, g)$ is endowed with an $n$-dimensi\-onal foliation $\calf$.
Since $\widetilde{\cal D}=T\calf$, we write ${r}_{\,\calf}={r}_{\widetilde{\cal D}}$,
and obtain dual to \eqref{E-genRicN} equations
\begin{eqnarray}\label{E-genRicN-2}
 {r}_{\,\calf} = \Div h +\<h,\,H\> -{\cal A}^\flat -\widetilde\Psi +{\rm Def}_{\calf}\,\tilde H,\qquad
 d_\calf\,\tilde H=0\,.
\end{eqnarray}
Note that $\Psi(X,Y)=\tr_{g}(A_Y A_X)$.
Definition \eqref{E-S-star} takes the form
\begin{equation}\label{E-S*main}
 \Sm^*_{\rm mix} = \Sm_{\rm mix} - \left\{\begin{array}{cc}
  \frac2p\,\big(\Sm_{\,\rm ex} +2\,\<\tilde T,\tilde T\>+\Div H\big) & ({\rm for}~g^\perp\mbox{\rm-variations}),\\
  \frac2n\,\big(\widetilde\Sm_{\,\rm ex} -\<\tilde T,\tilde T\>+\Div\tilde H\big) &
  ({\rm for}~\tilde g\mbox{\rm-variations})\,.
  \end{array}\right.
\end{equation}

From Theorem~\ref{T-main00}, we obtain the following.

\begin{cor}[\bf Euler-Lagrange equations]\label{T-main01}
Let $\calf$ be a foliation with a transversal distribution ${\cal D}$ on~$M$,
and $g\in{\rm Riem}(M,\,{T\calf},\,{\cal D})$ be critical for the action \eqref{E-Jmix}
with respect~to adapted variations,~\eqref{E-Sdtg}--\,\eqref{E-Sdtg-2}.
Then
\begin{eqnarray}\label{E-main-i}
\nonumber
 &&\hskip-7mm {r}_{{\cal D}} -\<\tilde h,\,\tilde H\> +\widetilde{\cal A}^\flat
 -\widetilde{\cal T}^\flat +\Phi_h +\Psi -{\rm Def}_{\cal D}\,H +\tilde{\cal K}^\flat\\
 && = \frac12\,\big(\Sm_{\rm mix} -\Sm^*_{\rm mix}(\Omega,g) +\Div(\tilde H - H) \big)\,g^\perp
 \quad({\rm for}~g^\perp\mbox{\rm-variations}), \\
\label{E-main-ii}
 \nonumber
 &&\hskip-7mm {r}_{\,\calf} -\<h,\,H\> +{\cal A}^\flat
 +\Phi_{\tilde h} +\Phi_{\tilde T} +\widetilde\Psi -{\rm Def}_{\calf}\,\tilde H \\
 && =\frac12\,\big( \Sm_{\rm mix} -\Sm^*_{\rm mix}(\Omega,g) +\Div(H-\tilde H) \big)\,\tilde g
 \quad({\rm for}~\tilde g\mbox{\rm-variations})\,.
\end{eqnarray}
\end{cor}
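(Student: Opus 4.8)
Since adapted variations split, via \eqref{e:Decomposition2}, into $g^\perp$- and $\tilde g$-variations, criticality for adapted variations is equivalent to criticality for each type separately, so I must produce one Euler--Lagrange equation for each. The plan is to read these off from Theorem~\ref{T-main00} and its dual, the decisive simplification being that $\widetilde{\cal D}=T\calf$ is integrable, whence its integrability tensor $T$ vanishes identically. I would begin by recording the consequences of $T=0$: directly from their defining identities, $\Phi_T=0$ and $\<T,T\>=0$, while $T^\sharp_Z=0$ for every $Z\in{\cal D}$ forces both ${\cal T}=\sum_i\epsilon_i(T^\sharp_i)^2$ and ${\cal K}=\sum_i\epsilon_i[T^\sharp_i,A_i]$ to vanish. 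The tilded objects $\widetilde{\cal T}$, $\tilde{\cal K}$, $\Phi_{\tilde T}$ are built from the integrability tensor of ${\cal D}$, which is unconstrained, and therefore survive.

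For $g^\perp$-variations the first equation is immediate. Comparing termwise, \eqref{E-main-i} is exactly \eqref{E-main-0i} with the $\Phi_T$ summand deleted, and the top line of \eqref{E-S*main} is exactly \eqref{E-S-star} with the $\<T,T\>$ term deleted; both deletions are licensed by $\Phi_T=0$ and $\<T,T\>=0$. Hence \eqref{E-main-i} is the conclusion of Theorem~\ref{T-main00} in the integrable case, requiring no new computation.

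For $\tilde g$-variations I would invoke the dual of Theorem~\ref{T-main00}, obtained by interchanging $\widetilde{\cal D}\leftrightarrow{\cal D}$ (equivalently, swapping tilded and untilded symbols) throughout the argument. This exchange is legitimate because every ingredient of the proof --- the splitting \eqref{e:Decomposition2}, the variation formulas of Lemma~\ref{L-H2h2-D}, the partial-Ricci identity of Proposition~\ref{L-CC-riccati} (whose dual is already displayed as \eqref{E-genRicN-2}), and the volume-rescaling of Proposition~\ref{P-1-5} --- is symmetric in the two distributions, in accordance with the ``$\widetilde{}$ for the second distribution'' convention of Section~\ref{subsec:mixedEH}. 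The dualized form of \eqref{E-main-0i} carries $r_\calf-\<h,H\>+{\cal A}^\flat-{\cal T}^\flat+\Phi_{\tilde h}+\Phi_{\tilde T}+\widetilde\Psi-{\rm Def}_\calf\tilde H+{\cal K}^\flat$ on the left, with $\tilde g$ and $\Div(H-\tilde H)$ on the right. Invoking $T=0$ once more annihilates ${\cal T}^\flat$ and ${\cal K}^\flat$ and reduces $\Sm^*_{\rm mix}$ to the lower line of \eqref{E-S*main}; what remains is precisely \eqref{E-main-ii}.

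The one step I would treat with genuine care --- conceptual rather than computational --- is certifying that the integrability hypothesis $T=0$ enters only as an after-the-fact simplification and is never tacitly used in the derivation of the general equation \eqref{E-main-0i}. Since Theorem~\ref{T-main00} is stated and proved for an arbitrary almost-product manifold with no integrability restriction, the role-swap of the two distributions is formally unobstructed, so no real obstacle arises; all that remains is the bookkeeping of which configuration tensors drop out.
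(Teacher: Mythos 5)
Your proposal is correct and coincides with the paper's (implicit) derivation: the paper likewise obtains \eqref{E-main-i}--\eqref{E-main-ii} by specializing Theorem~\ref{T-main00} and its dual (interchanging the roles of $\widetilde{\cal D}=T\calf$ and ${\cal D}$) and then deleting the terms $\Phi_T$, $\<T,T\>$, ${\cal T}^\flat$ and ${\cal K}^\flat$ that vanish because the integrability tensor $T$ of $T\calf$ is zero. Your bookkeeping of which configuration tensors drop out and which tilded ones survive matches \eqref{E-main-i}, \eqref{E-main-ii} and \eqref{E-S*main} exactly.
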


These mixed field equations admit a certain number of solutions (e.g., twisted products, see below),
we propose that they will find applications in theoretical physics, see discussion in~\cite{bdrs}.

A pseudo-Riemannian manifold may admit many different  geometrically interesting types of foliations: totally geodesic ($h=0$) and Riemannian ($\tilde h=0$) foliations are the  most common examples;
totally umbilical ($h=\frac1nH\,\tilde g$)  and conformal ($\tilde h=\frac1p\tilde H\,g^\bot$) foliations are also popular.
The simple examples of geodesic foliations are
parallel circles or winding lines on a flat torus.

\begin{example}\rm
 Let $\cal F$ be a totally umbilical foliation (i.e., $h=\frac1n\,H\tilde g$ and $T=0$). Then
\[
 \Phi_h=\frac{n-1}n\,{H}^\flat\otimes{H}^\flat,\quad
 {\cal A}^\flat = \frac{1}{n^2}\,g(H,H)\,\tilde g,\quad
 \Psi=\frac1n\,{H}^\flat\otimes {H}^\flat,\quad
 \Sm_{\,\rm ex} = \frac{n-1}{n}\,g(H,H).
\]
Hence, the fundamental equation \eqref{E-genRicN}$_1$ and the Euler-Lagrange equation \eqref{E-main-i} read as
\begin{eqnarray}\label{E-umbRD-b}
 && \hskip-5mm r_{\,\cal D} -\Div\tilde h -\<\tilde h,\,\tilde H\> +\widetilde{\cal A}^\flat
 +\widetilde{\cal T}^\flat +\frac1n\,{H}^\flat\otimes {H}^\flat -{\rm Def}_{\cal D}\,H = 0,\\
\label{E-main-TUmb}
\nonumber
 &&\hskip-5mm {r}_{{\cal D}} -\<\tilde h,\,\tilde H\> +\widetilde{\cal A}^\flat
 -\widetilde{\cal T}^\flat +{H}^\flat\otimes {H}^\flat -{\rm Def}_{\cal D}\,H +\tilde{\cal K}^\flat \\
 && = \frac12\,\big(\Sm_{\rm mix} -\Sm^*_{\rm mix}(\Omega,g) +\Div(\tilde H - H) \big)\,g^\perp
 \quad({\rm for}~g^\perp\mbox{\rm-variations}).
\end{eqnarray}
\end{example}

\subsection{Critical adapted metrics}
\label{subsec:EL-fol}

In this section we examine sufficient conditions for a metric $g$ to be critical with respect to $g^\perp$-variations.
The~case of $\tilde g$-variations is similar.

\begin{thm}\label{C-2umb}
Let a metric $g\in{\rm Riem}(M,\,{T\calf},\,{\cal D})$ be critical for the action \eqref{E-Jmix} with respect
to $g^\perp$-variations,  $n,p>1$, and ${\cal D}$ and $\widetilde{\cal D}$ determine totally umbilical foliations.
Then the leaves of $\widetilde{\cal D}$ are totally geodesic~and
\begin{equation}\label{E-DTwist}
 {r}_{{\cal D}} = (\Sm_{\rm mix}/p)\,g^\bot\quad {\rm and}\quad
 \left\{\begin{array}{cc}
  \int_\Omega(\Div\tilde H)\,{\rm d}\vol=0 & {\rm if}~p\ne2,\\
  \Sm_{\rm mix} = \const & {\rm if}~p=2\,.
  \end{array}\right.
\end{equation}
\end{thm}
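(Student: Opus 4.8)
The plan is to feed the integrability and umbilicity hypotheses into the critical-point condition in its intermediate form \eqref{E-main-0ih-temp}, whose left-hand side then reduces to the two computable tensors $\Div\tilde h$ and $\Phi_h$, and to combine this with the fundamental equation \eqref{E-genRicN} and a trace. First I would read off the vanishing tensors: since $\widetilde{\mathcal D}=T\calf$ is a foliation, $T=0$, and since $\mathcal D$ also determines a foliation, $\tilde T=0$; hence $\widetilde{\mathcal T}=0$, $\Phi_T=0$, and $\tilde{\mathcal K}=0$ by Remark~\ref{Phihzero}(2). Total umbilicity of $\widetilde{\mathcal D}$ ($h=\frac1nH\tilde g$) gives $\Phi_h=\frac{n-1}{n}H^\flat\otimes H^\flat$ and $\Psi=\frac1nH^\flat\otimes H^\flat$, while total umbilicity of $\mathcal D$ ($\tilde h=\frac1p\tilde H\,g^\perp$) gives $\langle\tilde h,\tilde H\rangle=\frac1p\,g(\tilde H,\tilde H)\,g^\perp$ and $\widetilde{\mathcal A}^\flat=\frac1{p^2}\,g(\tilde H,\tilde H)\,g^\perp$. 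With $T=\tilde T=0$ equation \eqref{E-main-0ih-temp} collapses to
\[
 \Div\tilde h+\tfrac{n-1}{n}\,H^\flat\otimes H^\flat
 =\tfrac12\big(\Sm_{\rm mix}-\Sm^*_{\rm mix}(\Omega,g)+\Div(\tilde H-H)\big)\,g^\perp .
\]

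The decisive step is the identity $\Div\tilde h=\frac1p(\Div\tilde H)\,g^\perp$. To obtain it I would use that $(\nabla_{E_a}g^\perp)(X,Y)=0$ for $X,Y\in\mathcal D$ (since $g^\perp=g-\tilde g$, $\nabla g=0$, and $\tilde g$ restricts trivially to $\mathcal D\times\mathcal D$), so that $(\nabla_{E_a}\tilde h)(X,Y)=\frac1p\,g(X,Y)\,\nabla_{E_a}\tilde H$ and hence $\widetilde{\Div}\tilde h=\frac1p(\widetilde{\Div}\tilde H)\,g^\perp$; then the analogue of \eqref{E-divN}, namely $\widetilde{\Div}\tilde H=\Div\tilde H+g(\tilde H,\tilde H)$, combined with $\langle\tilde h,\tilde H\rangle=\frac1p g(\tilde H,\tilde H)g^\perp$ and the dual of \eqref{E-divP} ($\Div\tilde h=\widetilde{\Div}\tilde h-\langle\tilde h,\tilde H\rangle$) gives the claim. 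Substituting it, the displayed equation becomes $\frac{n-1}{n}H^\flat\otimes H^\flat=c\,g^\perp$ for a scalar function $c$. Here is the crux: the left-hand side has rank at most one, whereas $g^\perp$ is non-degenerate of rank $p\ge2$; since $n>1$ keeps $\frac{n-1}{n}\ne0$, both sides must vanish. Thus $H=0$ and $c=0$, so $h=\frac1nH\tilde g=0$ and the leaves of $\widetilde{\mathcal D}$ are totally geodesic.

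It remains to identify $r_{\mathcal D}$ and to read off $c=0$. With $H=0$, $T=\tilde T=0$ and $\tilde h=\frac1p\tilde H\,g^\perp$, the fundamental equation \eqref{E-genRicN}$_1$ reduces to $r_{\mathcal D}=\Div\tilde h+\langle\tilde h,\tilde H\rangle-\widetilde{\mathcal A}^\flat$ (here $\Psi=\frac1nH^\flat\otimes H^\flat=0$ and ${\rm Def}_{\mathcal D}\,H=0$), which by the formulas above is a function multiple of $g^\perp$; writing $r_{\mathcal D}=\mu\,g^\perp$ and tracing via \eqref{E-Ric-Sm} gives $\mu=\Sm_{\rm mix}/p$, the first assertion. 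For the second, $c=0$ together with $H=0$ (so $\Div H=0$) reads $\Sm_{\rm mix}-\Sm^*_{\rm mix}(\Omega,g)=\frac{2-p}{p}\Div\tilde H$. When $p=2$ the right side vanishes, so $\Sm_{\rm mix}=\Sm^*_{\rm mix}(\Omega,g)=\const$. When $p\ne2$, I would note that $H=0$ and $\tilde T=0$ force $\Sm_{\,\rm ex}=0$, whence \eqref{E-S*main} yields $\Sm^*_{\rm mix}=\Sm_{\rm mix}$ pointwise; integrating the previous relation over $\Omega$, the left side vanishes by the definition of the mean value, so $\int_\Omega(\Div\tilde H)\,{\rm d}\vol=0$.

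The main obstacle is the computation $\Div\tilde h=\frac1p(\Div\tilde H)\,g^\perp$ and the rank argument it feeds: this is precisely the place where integrability of both foliations (killing $\widetilde{\mathcal T},\Phi_T,\tilde{\mathcal K}$) and the hypotheses $n,p>1$ are consumed, the former to linearize the left-hand side and the latter to force the rank-one tensor $\Phi_h$ and the proportionality constant to vanish simultaneously.
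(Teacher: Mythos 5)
Your proof is correct and follows essentially the same route as the paper's: both reduce the Euler--Lagrange equation for two umbilical integrable distributions to $\frac{n-1}{n}H^\flat\otimes H^\flat = c\,g^\perp$, kill $H$ by the rank-one versus rank-$p$ argument (using $n,p>1$), then read off the ${\cal D}$-conformality of $r_{\cal D}$ from the fundamental identity and obtain the $p\ne 2$ / $p=2$ dichotomy by tracing and integrating. The only difference is bookkeeping: the paper isolates $H^\flat\otimes H^\flat$ by subtracting the specialized identity \eqref{E-dtp-gen} from the Euler--Lagrange equation \eqref{E-dtw-1A}, whereas you work from \eqref{E-main-0ih-temp} and verify directly that $\Div\tilde h=\frac1p(\Div\tilde H)\,g^\perp$ -- both computations are valid and lead to the same display.
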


\begin{proof}
We have the identity, see \eqref{E-umbRD-b} with $\tilde T=0$ and $\tilde h=\frac1p\,\tilde H\,g^\perp$,
\begin{equation}\label{E-dtp-gen}
 r_{{\cal D}} +\frac1n\,H^\flat\otimes H^\flat -{\rm Def}_{\cal D}\,H
 = \frac1p\,\Big(\frac{p-1}{p}\,g(\tilde H,\tilde H) +\Div\tilde H \Big) g^\perp\,.
\end{equation}
Hence, or by \eqref{eq-ran-ex},
\[
 \Sm_{\rm mix} =\frac{n-1}{n}\,g(H,H) +\frac{p-1}{p}\,g(\tilde H,\tilde H) +\Div(H+\tilde H).
\]
Let the metric $g$ be critical with respect to $g^\perp$-variations. By \eqref{E-main-TUmb} we have
\begin{equation}\label{E-dtw-1A}
 r_{{\cal D}} + H^\flat\otimes H^\flat -{\rm Def}_{\cal D}\,H
  = \frac12\,\Big(\Sm_{\rm mix} -\Sm^*_{\rm mix}(\Omega,g) + \frac{2(p-1)}{p^2}\,g(\tilde H,\tilde H)
  +\Div(\tilde H - H)\Big)\,g^\perp.
\end{equation}
The difference of \eqref{E-dtw-1A} and \eqref{E-dtp-gen} is
\begin{equation*}
 \frac{n-1}{n}\,H^\flat\otimes H^\flat
 =\frac12\,\Big(\frac{n-1}{n}\,g(H,H) +\frac{p-1}{p}\,g(\tilde H,\tilde H) -\Sm^*_{\rm mix}(\Omega,g)
 +\frac{2(p-1)}{p}\,\Div\tilde H\Big)\,g^\perp.
\end{equation*}
As the symmetric $(0,2)$-tensor $H^\flat\otimes H^\flat$ has rank $\le 1$ and $g^\perp$ has rank $p>1$,
we obtain $H=0$; hence, the leaves of $\widetilde{\cal D}$ are totally geodesic.
By \eqref{E-dtp-gen}, the
tensor $r_{{\cal D}}$ is ${\cal D}$-conformal.
We~also~have
$\Sm_{\rm mix} + \frac{p-2}{p}\Div\tilde H =\Sm^*_{\rm mix}(\Omega,g)$, where $\Sm^*_{\rm mix}=\Sm_{\rm mix}$.
Thus,
$\int_\Omega(\Div\tilde H)\,{\rm d}\vol=0$ for $p\ne2$.
\end{proof}

\begin{example}\label{C-dtwist}\rm
Let $M = M_1\times M_2$ be the product of pseudo-Riemannian manifolds $(M_i\,,\,g_i)$ ($i\in\{1,2\}$),
and let $\pi_i : M\to M_i$ and $d\pi_i: TM\to TM_i$ be canonical projections.
Given twisting functions $f_i\in C^\infty(M)$,
a \textit{double-twisted product} $M_1\times_{(f_1,f_2)} M_2$ is $M$ with the metric
 $g =e^{f_1} \, \pi_1^\ast g_1 + e^{f_2} \, \pi_2^\ast g_2$.
If $f_1 = \const$ then we have a {twisted product}
(a warped product if, in addition, $f_2 = F \circ \pi_1$ for some $F \in C^\infty (M_1)$).
The {leaves} $M_1\times\{y\}$ (tangent to $\widetilde{\cal D}$) and the {fibers} $\{x\}\times M_2$
(tangent to ${\cal D}$)
are totally umbilical in $(M,g)$ and this property characterizes double-twisted products  (cf. \cite{pr}).
For any double-twisted product, we~have $T=0$~and
\begin{eqnarray*}
 A_Y\eq -Y(f_1)\,\widetilde\id,\quad \ \ h=-(\nabla^\perp f_1)\,\tilde g,\quad H=-n\nabla^\perp f_1,
\end{eqnarray*}
(and similarly for $\tilde T,\tilde A_X,\tilde h,\tilde H$)
where $X\in\widetilde{\cal D}$ and $Y\in{\cal D}$ are unit vectors.
In this case, see \eqref{E-divN},
\begin{equation*}
 \Div\tilde H= -p\,\widetilde\Delta\,f_2 -p^2 g(\widetilde\nabla f_2,\widetilde\nabla f_2),\quad
 \Div H= -n\,\Delta^\perp f_1 -n^2 g(\nabla^\perp f_1,\nabla^\perp f_1).
\end{equation*}
The $\widetilde{\cal D}$-{Laplacian} of a function $f$ is given by the formula
$\widetilde\Delta\,f=\widetilde{\Div}\,(\widetilde\nabla\,f)$.
By \eqref{eq-ran-ex},
\[
 \Sm_{\rm mix} =\Div(H+\tilde H)+\frac{n-1}{n}\,g(H,H) +\frac{p-1}{p}\,g(\tilde H,\tilde H).
\]
 Let $g$ be critical for the action \eqref{E-Jmix} with respect to $g^\perp$-variations.
By Theorem~\ref{C-2umb}, $H=0$, the leaves are totally geodesic, and \eqref{E-DTwist} hold.
Note that
 $p\,\widetilde\Delta\,f_2 +p^2 g(\widetilde\nabla f_2,\widetilde\nabla f_2)=0$
is equivalent to the equality $\widetilde\Delta\,e^{p\,f_2}=0$.

 Summarizing, we conclude that a pseudo-Riemannian double-twisted product metric $g$
is critical for the action \eqref{E-Jmix} with respect to $g^\perp$-variations if the following conditions hold:

\smallskip
(i)~$r_{\cal D}$ is ${\cal D}$-conformal; \quad

(ii) if $p\ne2$ then $\widetilde\Delta\,e^{p\,f_2}=0$
 (hence, $e^{p\,f_2}$ is $\widetilde{\cal D}$-harmonic when $g_{\,|\widetilde{\cal D}}$ is definite);

(iii)~$f_1$ does not depend on $M_2$, i.e., the twisted product of $(M_1\,,\,e^{f_1}g_1)$ and $(M_2\,,\,g_2)$.

\smallskip
\noindent
Nonconstant bounded (e.g. positive) harmonic functions exist
on a complete mani\-fold with nonnegative curvature outside a compact set \cite{LT}.
By
S.T.\,Yau theorem (1975), there are no
non\-constant positive harmonic functions on a complete manifold with nonnegative Ricci curvature.
\end{example}

The following theorem continues Example~\ref{Ex-Hopf}: one of distributions becomes integrable.

\begin{thm}\label{T-TG-Rfol}
Let a metric $g\in{\rm Riem}(M,\,{T\calf},\,{\cal D})$ be critical for the action \eqref{E-Jmix} with respect to $g^\perp$-variations, ${\cal D}$ nowhere integrable $($hence, $p>1)$ and $\widetilde{\cal D}$ tangent to a~totally geodesic Riemannian foliation. Then
\begin{equation}\label{E-umb-Riem}
 {r}_{\cal D} = (\Sm_{\rm mix}/p)\,g^\bot,\quad
 {\rm where}\ \Sm_{\rm mix}={\rm const}\ \ {\rm when}\ \ p\ne4.
\end{equation}
\end{thm}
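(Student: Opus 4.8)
The plan is to specialize the foliated Euler--Lagrange equation \eqref{E-main-i} and the fundamental equation \eqref{E-genRicN}$_1$ to the rigid geometry dictated by the hypotheses, and then to read off both conclusions by taking a trace over ${\cal D}$. First I would translate the hypotheses into vanishing of configuration tensors. Since $\widetilde{\cal D}=T\calf$ is a foliation it is integrable, so $T=0$; ``totally geodesic'' means $h=0$ and ``Riemannian'' means $\tilde h=0$, as recorded after Corollary~\ref{T-main01}; nowhere integrability of ${\cal D}$ means $\tilde T\neq0$ and forces $p>1$, because a line field is always integrable. From $h=0$ I get $H=0$, ${\cal A}=0$, $\Phi_h=0$ and ${\rm Def}_{\cal D}H=0$; from $\tilde h=0$ I get $\tilde H=0$, $\widetilde{\cal A}=0$, $\<\tilde h,\tilde H\>=0$ and, since each $\tilde A_a=0$, also $\tilde{\cal K}=0$; and because both $h=0$ and $T=0$, the tensor $\Psi(X,Y)=\tr(A_YA_X+T^\sharp_YT^\sharp_X)$ vanishes. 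In particular the divergence term $\Div(\tilde H-H)$ on the right of \eqref{E-main-i} also vanishes; this is exactly the situation of Example~\ref{Ex-Hopf} with the extra relation $T=0$.

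With these reductions the Euler--Lagrange equation \eqref{E-main-i} collapses to
\[
 r_{\cal D}-\widetilde{\cal T}^\flat=\frac12\big(\Sm_{\rm mix}-\Sm^*_{\rm mix}(\Omega,g)\big)\,g^\perp,
\]
while the fundamental equation \eqref{E-genRicN}$_1$ collapses to the single identity $r_{\cal D}=-\widetilde{\cal T}^\flat$. The key step is to combine these: substituting $\widetilde{\cal T}^\flat=-r_{\cal D}$ into the displayed equation eliminates $\widetilde{\cal T}^\flat$ and gives $2\,r_{\cal D}=\frac12(\Sm_{\rm mix}-\Sm^*_{\rm mix}(\Omega,g))\,g^\perp$, that is,
\[
 r_{\cal D}=\frac14\big(\Sm_{\rm mix}-\Sm^*_{\rm mix}(\Omega,g)\big)\,g^\perp.
\]

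Next I would take the trace over ${\cal D}$. Using $\tr_g r_{\cal D}=\Sm_{\rm mix}$ from \eqref{E-Ric-Sm} and $\tr_g g^\perp=p$, this yields $\Sm_{\rm mix}=\frac{p}{4}(\Sm_{\rm mix}-\Sm^*_{\rm mix}(\Omega,g))$, hence $\Sm_{\rm mix}-\Sm^*_{\rm mix}(\Omega,g)=\frac4p\,\Sm_{\rm mix}$. Putting this back into the last displayed identity gives at once the conformality $r_{\cal D}=(\Sm_{\rm mix}/p)\,g^\perp$, valid for every $p>1$. For the constancy I would rearrange the traced relation as $(p-4)\,\Sm_{\rm mix}=p\,\Sm^*_{\rm mix}(\Omega,g)$; the right-hand side is a mean value and hence a genuine constant, so when $p\neq4$ the nonzero factor $(p-4)$ forces $\Sm_{\rm mix}=\const$.

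The computation is otherwise routine; the only delicate point is the bookkeeping in the first step, where one must verify that each auxiliary tensor truly vanishes --- in particular $\Psi=0$ requires both $h=0$ and $T=0$, and the cancellation producing $r_{\cal D}=-\widetilde{\cal T}^\flat$ uses the full fundamental equation, not merely its trace. One must also keep in mind that $\Sm^*_{\rm mix}(\Omega,g)$ is the mean value of $\Sm^*_{\rm mix}$ (a constant) rather than the pointwise function; this is precisely what converts the traced relation into constancy of $\Sm_{\rm mix}$ for $p\neq4$.
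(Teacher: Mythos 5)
Your proposal is correct and follows essentially the same route as the paper: reduce \eqref{E-genRicN}$_1$ to $r_{\cal D}=-\widetilde{\cal T}^\flat$ and \eqref{E-main-i} to $r_{\cal D}-\widetilde{\cal T}^\flat=\frac12(\Sm_{\rm mix}-\Sm^*_{\rm mix}(\Omega,g))\,g^\perp$, combine to get $r_{\cal D}=\frac14(\Sm_{\rm mix}-\Sm^*_{\rm mix}(\Omega,g))\,g^\perp$, then trace. The only cosmetic difference is that the paper also records the explicit value $\Sm^*_{\rm mix}=\frac{p-4}{p}\<\tilde T,\tilde T\>$, which your argument correctly shows is not needed beyond the fact that $\Sm^*_{\rm mix}(\Omega,g)$ is a constant.
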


\proof
By conditions, $h=0=\tilde h$ and $T=0$. Thus, \eqref{E-genRicN}$_1$ reads as
\begin{eqnarray}\label{E-geodRD-b}
 r_{\,\cal D} \eq -\widetilde{\cal T}^\flat.
\end{eqnarray}
Tracing \eqref{E-geodRD-b}, we find $\Sm_{\rm mix}=\<\tilde T,\tilde T\>$.
From \eqref{E-main-i} we obtain
\begin{eqnarray}\label{E-main-i-fol}
 {r}_{{\cal D}} -\widetilde{\cal T}^\flat  \eq \frac12\,\big(\Sm_{\rm mix} -\Sm^*_{\rm mix}(\Omega,g)\big)\,g^\perp
  \quad({\rm for}~g^\perp\mbox{\rm-variations}),
\end{eqnarray}
where $\Sm^*_{\rm mix}=\frac{p-4}p\,\<\tilde T,\tilde T\>$, see \eqref{E-S*main}.
Adding \eqref{E-geodRD-b} and \eqref{E-main-i-fol}, we obtain
\begin{equation}\label{E-rD-Sm-1}
 {r}_{{\cal D}} = \frac14\,\big(\Sm_{\rm mix}
 -\Sm^*_{\rm mix}(\Omega,g)\big)\,g^\perp.
\end{equation}
Tracing \eqref{E-rD-Sm-1}, we get $(p-4)\Sm_{\rm mix} =p\,\Sm^*_{\rm mix}(\Omega,g)$,
hence, $\Sm_{\rm mix}={\rm const}$ when $p\ne4$.
This and \eqref{E-rD-Sm-1} complete the proof.
\qed

\begin{thm}\label{T-TG-int}
Let $\calf$ be a totally geodesic foliation of a pseudo-Riemannian manifold $(M,g)$ with integrable normal bundle ${\cal D}$.
 If $g$ is critical for the action \eqref{E-Jmix} with respect to adapted variations
then the following conditions hold:
\begin{equation}\label{E-scalJ-tgeo-1}
 {\rm (i)}~\Div\big(\tilde h - \frac1p\,\tilde H\,g^\perp\big) = 0,\quad
 {\rm (ii)}~\Phi_{\tilde h}=\frac1n\,\widetilde\Sm_{\,\rm ex}\,\tilde g,\ \
{\rm and}\ \Sm_{\rm mix}={\rm const}\ \ {\rm when}\ \ n\ne2\,.
\end{equation}
\end{thm}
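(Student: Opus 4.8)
The plan is to specialise the two Euler--Lagrange equations of Corollary~\ref{T-main01} to this setting, reading off (i) from the $g^\perp$-equation \eqref{E-main-i} and (ii) from the $\tilde g$-equation \eqref{E-main-ii}, and then deducing the constancy of $\Sm_{\rm mix}$ by comparing the $g$-traces of both. First I would record the simplifications forced by the hypotheses: total geodesy of $\calf$ gives $h=0$ and $H=0$; integrability of $\calf$ gives $T=0$; integrability of ${\cal D}$ gives $\tilde T=0$. Substituting these into the definitions annihilates nearly every auxiliary tensor, namely $\Phi_h=\Phi_T=\Phi_{\tilde T}=0$, ${\cal A}=\widetilde{\cal T}=0$, $\Psi=0$, $\tilde{\cal K}=0$, and $\Sm_{\,\rm ex}=\langle T,T\rangle=\langle\tilde T,\tilde T\rangle=0$ (note $\widetilde{\cal A}$ does \emph{not} vanish and must be tracked). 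In particular \eqref{eq-ran-ex} collapses to the key identity $\Sm_{\rm mix}=\widetilde\Sm_{\,\rm ex}+\Div\tilde H$, which I denote $(\star)$.

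For (i) I substitute these vanishings into \eqref{E-main-i} and use the reduced form $r_{\cal D}=\Div\tilde h+\langle\tilde h,\tilde H\rangle-\widetilde{\cal A}^\flat$ of \eqref{E-genRicN} to eliminate $r_{\cal D}$; what survives is $\Div\tilde h=\lambda\,g^\perp$ for a single function $\lambda$. To put this into the form (i) I would establish the auxiliary identity $\Div(\tilde H\,g^\perp)=(\Div\tilde H)\,g^\perp$, which follows pointwise from $\nabla g=0$ together with $(\nabla_Z g^\perp)(X,Y)=0$ for $X,Y\in{\cal D}$. Taking the $g$-trace of $\Div\tilde h=\lambda g^\perp$ and using $\tr_g(\Div\tilde h)=\Div\tilde H$ (the dual of the trace formula for $\Div h$ in the Remark following Proposition~\ref{L-CC-riccati}) fixes $\lambda=\tfrac1p\Div\tilde H$; combining the two displays then yields $\Div(\tilde h-\tfrac1p\tilde H\,g^\perp)=0$, which is (i).

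For (ii) I substitute the vanishings into \eqref{E-main-ii} and use the reduced dual equation $r_{\,\calf}=-\widetilde\Psi+{\rm Def}_{\calf}\tilde H$ from \eqref{E-genRicN-2} to cancel the terms $r_{\,\calf}$, $\widetilde\Psi$ and ${\rm Def}_{\calf}\tilde H$; this leaves $\Phi_{\tilde h}=\tfrac12\big(\Sm_{\rm mix}-\Sm^*_{\rm mix}(\Omega,g)-\Div\tilde H\big)\tilde g$. Taking the $g$-trace, with $\tr_g\Phi_{\tilde h}=\widetilde\Sm_{\,\rm ex}$ and $\tr_g\tilde g=n$, identifies the scalar coefficient as $\tfrac1n\widetilde\Sm_{\,\rm ex}$, whence $\Phi_{\tilde h}=\tfrac1n\widetilde\Sm_{\,\rm ex}\,\tilde g$.

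The remaining claim $\Sm_{\rm mix}=\const$ for $n\ne2$ is the step I expect to be the most delicate, as it hinges on correctly handling the mean-value term $\Sm^*_{\rm mix}(\Omega,g)$. Using \eqref{E-S*main} together with $(\star)$, the pointwise $\tilde g$-version reduces to $\Sm^*_{\rm mix}=\tfrac{n-2}n\Sm_{\rm mix}$, so its mean value equals $\tfrac{n-2}n\Sm_{\rm mix}(\Omega,g)$. Feeding this into the traced $\tilde g$-equation and simplifying with $(\star)$ produces $(n-2)\widetilde\Sm_{\,\rm ex}=(n-2)\Sm_{\rm mix}(\Omega,g)$, so for $n\ne2$ the function $\widetilde\Sm_{\,\rm ex}$ equals the constant $\Sm_{\rm mix}(\Omega,g)$; then $(\star)$ gives $\Sm_{\rm mix}-\Sm_{\rm mix}(\Omega,g)=\Div\tilde H$. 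Independently, the trace of \eqref{E-main-i} gives $\Sm_{\rm mix}-\Sm_{\rm mix}(\Omega,g)=\tfrac{2-p}p\Div\tilde H$ (here $\Sm^*_{\rm mix}=\Sm_{\rm mix}$ pointwise for $g^\perp$-variations, since the correction $\Sm_{\,\rm ex}+2\langle\tilde T,\tilde T\rangle+\Div H$ vanishes). Comparing the two forces $\tfrac{2(p-1)}p\Div\tilde H=0$, hence $\Div\tilde H=0$ and $\Sm_{\rm mix}=\const$. The crux is thus the decisive factor $(n-2)$ emerging from $\Sm^*_{\rm mix}=\tfrac{n-2}n\Sm_{\rm mix}$, and the careful matching of the two mean-value terms arising from the $g^\perp$- and $\tilde g$-variations.
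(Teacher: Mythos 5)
Your proposal is correct and follows essentially the same route as the paper: specialize the Euler--Lagrange equations \eqref{E-main-i}--\eqref{E-main-ii} using $h=H=0$, $T=\tilde T=0$, take $g$-traces, and substitute the traced identities back. In fact your final step --- combining the traced $g^\perp$- and $\tilde g$-equations via the pointwise identity $\Sm^*_{\rm mix}=\frac{n-2}{n}\,\Sm_{\rm mix}$ to force $\Div\tilde H=0$ and hence $\Sm_{\rm mix}=\const$ --- is spelled out more completely than in the paper's own proof, which stops at the constancy of $\widetilde\Sm_{\,\rm ex}$.
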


\begin{proof}
 Using \eqref{E-main-0ih-temp} and its dual with $\tilde T=0$,
 rewrite Euler-Lagrange equations \eqref{E-main-i}\,--\,\eqref{E-main-ii} as
\begin{eqnarray}\label{E-main-i-hF}
 \Div(\tilde h -\tilde H\,g^\perp) +\Phi_h = \frac12\,\big(\Sm_{\,\rm ex} +\widetilde\Sm_{\,\rm ex}
  -\Sm^*_{\,\rm mix}(\Omega,g)\big)\,g^\perp \ \ ({\rm for}~g^\perp\mbox{\rm-variations}), \\
\label{E-main-ii-hF}
 \Div(h -H\,\tilde g) +\Phi_{\tilde h} = \frac12\,\big(\Sm_{\,\rm ex} +\widetilde\Sm_{\,\rm ex}
  -\Sm^*_{\,\rm mix}(\Omega,g)\big)\,\tilde g \quad({\rm for}~\tilde g\mbox{\rm-variations}),
\end{eqnarray}
We need to show the following (for totally geodesic foliations with integrable normal bundle):

\smallskip\noindent\
{\rm (i)}~if $g$ is critical for the action $J_{\rm mix,\Omega}$ with respect to $g^\perp$-variations
then \eqref{E-scalJ-tgeo-1}(i) holds; and

\smallskip\noindent\
{\rm (ii)} if $g$ is critical for $J_{\rm mix,\Omega}$ with respect to $\tilde g$-variations
then \eqref{E-scalJ-tgeo-1}(ii) holds.

\smallskip\noindent
To show (i), observe that for totally geodesic foliations $h=0$; hence, (\ref{E-main-i-hF}) reads:
\begin{equation}\label{divtildeh}
 \Div\,(\tilde h - \tilde H g^{\perp})
 = -\frac{1}{2}\,\big(\,\widetilde\Sm_{\,\rm ex} + \Sm^*_{\,\rm mix}(\Omega,g)\,\big) \,g^\perp\,,
\end{equation}
where $\Sm^*_{\,\rm mix}=\Sm_{\,\rm mix}$. Taking trace of \eqref{divtildeh} yields
\begin{equation}\label{trdivtildeh}
 (1-p) \Div \tilde H  = \frac{p}{2}\,\big(\,\widetilde\Sm_{\,\rm ex} - \Sm^*_{\,\rm mix}(\Omega,g)\,\big).
\end{equation}
Therefore, introducing the values of \eqref{trdivtildeh} into \eqref{divtildeh}, we obtain
\begin{equation*}
 \Div\big(\tilde h - \frac1p\,\tilde H\,g^\perp\big) = \Div\big(\tilde h - \tilde H\,g^\perp\big)
  +\frac{p-1}{p}\,(\Div \tilde H) \,g^\perp
  = \big( \frac{1-p}{p} + \frac{p-1}{p} \big)\Div \tilde H\,g^\perp = 0.
\end{equation*}
 To show (ii), from (\ref{E-main-ii-hF}) with $h=0$ we obtain for $T\calf$-variations,
\begin{equation}\label{Phitilde}
 \Phi_{\tilde h} = \frac12\,\big( \widetilde\Sm_{\,\rm ex} -\Sm^*_{\,\rm mix}(\Omega,g)\big)\,\tilde g\,,
\end{equation}
where $\Sm^*_{\,\rm mix}=\Sm_{\,\rm mix}-\frac2n(\widetilde{\Sm}_{\,\rm ex}+\Div\tilde H)$.
Tracing \eqref{Phitilde} yields
\begin{equation}\label{trPhitilde}
 \widetilde\Sm_{\,\rm ex} = \frac{n}{2}\,\big(\widetilde\Sm_{\,\rm ex} - \Sm^*_{\,\rm mix}(\Omega,g)\big),
\end{equation}
then introducing the values of \eqref{trPhitilde} into \eqref{Phitilde} we obtain
 $\Phi_{\tilde h} = \frac{1}{n}\,\widetilde\Sm_{\,\rm ex} \, \tilde g$.
It also follows from (\ref{trPhitilde}) that
 $\frac{2-n}{n}\,\widetilde\Sm_{\,\rm ex} = - \frac{n}{2}\,\Sm^*_{\,\rm mix}(\Omega,g)$
 for $n \neq 2$, while for $n=2$ we have $\Sm^*_{\,\rm mix}(\Omega,g) =0$.
\end{proof}

In light of Theorems~\ref{T-TG-Rfol} and \ref{T-TG-int}, it might be interesting to
study totally geodesic foliations


\smallskip
$(a)$ {with totally geodesic normal bundle and for which
\eqref{E-umb-Riem} holds}.

\smallskip
$(b)$ {with integrable normal bundle and for which conditions \eqref{E-scalJ-tgeo-1} hold}.


\subsection{Flows ($n=1$)}\label{subsec:dim1fol}

Let $\widetilde{\cal D}$ be spanned by a nonsingular vector field $N$, then $N$ defines a flow
(a one-dimensional foliation). An example is provided by a circle action $S^1 \times M \to M$ without fixed points.
 Assume that $| g(N,N) | = 1$ and denote $\epsilon_{N} = g(N,N)$.
Thus, $\Sm_{\rm mix}=\epsilon_N\Ric_{N}$, and
the partial Ricci tensor takes a particularly simple form:
\[
 r_{\widetilde{\cal D}}=\epsilon_{N} \Ric_{N}\,\tilde g,\qquad
 r_{\,\cal D}=\epsilon_{N} (R_N)^\flat,
\]
where $R_N\!=R(N,\,\cdot\,)N$ and
$\Ric_{N}=\sum_i \eps_i\,g(R_N({\cal E}_i),{\cal E}_i)$.
The action \eqref{E-Jmix} reduces itself to
\begin{equation}\label{E-Jmix-N}
 J_{\rm mix,\Omega}(g) = \epsilon_{N}\int_{\Omega} \Ric_{N}\,{\rm d}\vol_g.
\end{equation}
We have $\tilde h = \tilde h_{sc} N$,
where $\tilde h_{sc} = \epsilon_{N} \< \tilde h , N \>$ is the scalar second fundamental form of~${\cal D}$.

Define the functions $\tilde\tau_i=\tr\tilde A_N^{\,i}\ (i\ge0)$.
 It is easy to check that $\widetilde{\Sm}_{\,\rm ex}=\tilde\tau_1^2 -\tilde\tau_2$ and
\begin{eqnarray*}
 \Div N \eq \sum\nolimits_{\,i} \epsilon_{i}\,g(\nabla_i N, {\cal E}_i)
 = -g(N,\sum\nolimits_{\,i}\! \epsilon_{i}\nabla_i\,{\cal E}_i)
 = -g(N, \tilde H) = -\tilde\tau_1,\\
 \Div(\tilde\tau_1 N) \eq N(\tilde\tau_1) +\tilde\tau_1\Div N=N(\tilde\tau_1)-\tilde\tau_1^2.
\end{eqnarray*}
The curvature of the flow lines is $H=\epsilon_N\,\nabla_{N}\,N$.
 It is easy to see that \eqref{E-S*main} takes the form
\begin{equation*}
 \Sm^*_{\rm mix} = \epsilon_{N}\Ric_{N} -\,2\left\{\begin{array}{cc}
 \frac2p\,\<\tilde T,\tilde T\> +\frac1p\,\Div H & ({\rm for}~g^\perp\mbox{\rm-variations}),\\
 \epsilon_{N}( N(\tilde\tau_1)-\tilde\tau_2)-\<\tilde T,\tilde T\> &
 ({\rm for}~\tilde g\mbox{\rm-variations})\,.
\end{array}\right.
\end{equation*}

From Theorem~\ref{T-main00} (of Corollary~\ref{T-main01}) we obtain the following.

\begin{cor}[\bf Euler-Lagrange equations]
Let the distribution $\widetilde{\cal D}$ be spanned by a non-singular vector field~$N$, and
a pseudo-Riemannian metric $g\in{\rm Riem}(M,\,\widetilde{\cal D},\,{\cal D})$ be critical for the action
\eqref{E-Jmix-N} with respect to adapted variations. Then
\begin{eqnarray}\label{E-main-1i}
\nonumber
 && \epsilon_{N} \big( R_N +\tilde A_N^2 -(\tilde T^\sharp_N)^2 +[\tilde T_N^\sharp,\tilde A_N]\big)^\flat \!-\tilde\tau_1\tilde h_{sc}
 +H^\flat\otimes H^\flat -{\rm Def}_{\cal D}\,H  \\
 &&\hskip4mm =\frac12\,\big(\epsilon_{N} \Ric_{N} -\Sm^*_{\rm mix}(\Omega,g)
 +\Div(\epsilon_{N} \tilde\tau_1 N - H)\big)\,g^\perp \ \ ({\rm for}~g^\perp\mbox{\rm-variations}),\\
\label{E-main-2i}
 && \epsilon_{N} \Ric_{N} +\,\Sm^*_{\rm mix}(\Omega,g) -4 \< \tilde T , \tilde T \>  -\Div(\epsilon_{N}\tilde\tau_1 N + H) =0
 \ \ ({\rm for}~\tilde g\mbox{\rm-variations}).
\end{eqnarray}
\end{cor}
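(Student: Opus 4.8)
The plan is to derive both equations by specializing the general Euler--Lagrange equations \eqref{E-main-i} and \eqref{E-main-ii} of Corollary~\ref{T-main01} to $n=1$, where $\widetilde{\cal D}$ is spanned by the unit field $N$. First I would list the simplifications forced by $\dim\widetilde{\cal D}=1$. The integrability tensor of a line field vanishes, so $T=0$; since $g(\nabla_N N,N)=\tfrac12 N(\epsilon_N)=0$ we get $h(N,N)=\nabla_N N$ and $H=\epsilon_N\nabla_N N$, whence $\langle h,h\rangle=g(H,H)$ and therefore $\Sm_{\,\rm ex}=0$ and $\Phi_h=0$. Evaluating the defining identity of $\Psi$ with the single frame vector $E_1=N$ gives $\tr(A_YA_X)=g(H,X)\,g(H,Y)$ and $\tr(T^\sharp_YT^\sharp_X)=0$, i.e. $\Psi=H^\flat\otimes H^\flat$. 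The sums defining $\widetilde{\cal A},\widetilde{\cal T}$ and $\widetilde{\cal K}$ collapse to their $a=1$ terms $\epsilon_N\tilde A_N^2,\ \epsilon_N(\tilde T^\sharp_N)^2,\ \epsilon_N[\tilde T^\sharp_N,\tilde A_N]$; and $\Sm_{\rm mix}=\epsilon_N\Ric_N$, $r_{\cal D}=\epsilon_N(R_N)^\flat$, $r_{\,\calf}=\epsilon_N\Ric_N\,\tilde g$, $\tilde H=\epsilon_N\tilde\tau_1 N$, $\langle\tilde h,\tilde H\rangle=\tilde\tau_1\tilde h_{sc}$.

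For \eqref{E-main-1i} I would substitute these data directly into the $g^\perp$-equation \eqref{E-main-i}. The three operator terms $\widetilde{\cal A}^\flat-\widetilde{\cal T}^\flat+\widetilde{\cal K}^\flat$ fuse with $r_{\cal D}=\epsilon_N(R_N)^\flat$ into the single expression $\epsilon_N\big(R_N+\tilde A_N^2-(\tilde T^\sharp_N)^2+[\tilde T^\sharp_N,\tilde A_N]\big)^\flat$; the term $-\langle\tilde h,\tilde H\rangle$ becomes $-\tilde\tau_1\tilde h_{sc}$, the term $\Phi_h$ drops out, $\Psi$ supplies $H^\flat\otimes H^\flat$, and on the right-hand side $\Sm_{\rm mix}$ and $\tilde H$ are replaced by $\epsilon_N\Ric_N$ and $\epsilon_N\tilde\tau_1 N$. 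This reproduces \eqref{E-main-1i} verbatim.

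For \eqref{E-main-2i} the observation to exploit is that \eqref{E-main-ii} is an identity between symmetric $(0,2)$-tensors on $\widetilde{\cal D}$, and since $\dim\widetilde{\cal D}=1$ every such tensor equals its $\tilde g$-trace times $\tilde g$; hence \eqref{E-main-ii} is equivalent to the single scalar equation obtained by applying $\tr_g$. I would therefore trace \eqref{E-main-ii} using \eqref{E-Ric-Sm} and the trace identities of the Remark together with their $\,\widetilde{}\,$-analogues, namely $\tr_g\langle h,H\rangle=g(H,H)$, $\tr{\cal A}=\langle h,h\rangle$, $\tr_g\Phi_{\tilde h}=\widetilde\Sm_{\,\rm ex}$, $\tr_g\Phi_{\tilde T}=-\langle\tilde T,\tilde T\rangle$, $\tr_g\widetilde\Psi=\langle\tilde h,\tilde h\rangle-\langle\tilde T,\tilde T\rangle$ and $\tr_g{\rm Def}_{\calf}\tilde H=\Div\tilde H+g(\tilde H,\tilde H)$. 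Using $\widetilde\Sm_{\,\rm ex}=g(\tilde H,\tilde H)-\langle\tilde h,\tilde h\rangle$ and $\langle h,h\rangle=g(H,H)$, the mean-curvature and extrinsic-curvature quadratic terms cancel in pairs, leaving $\epsilon_N\Ric_N-2\langle\tilde T,\tilde T\rangle-\Div\tilde H$ on the left and $\tfrac12\big(\epsilon_N\Ric_N-\Sm^*_{\rm mix}(\Omega,g)+\Div(H-\tilde H)\big)$ on the right; clearing denominators and writing $\tilde H=\epsilon_N\tilde\tau_1 N$ gives \eqref{E-main-2i}.

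The routine substitutions are harmless; the two places needing care are the curvature reductions $\Phi_h=0$ and $\Psi=H^\flat\otimes H^\flat$, which rely on one-dimensionality of $\widetilde{\cal D}$ and on $N$ being unit, and, for the $\tilde g$-equation, keeping track of the signature factors $\epsilon_i,\epsilon_N$ so that the cancellation of $\widetilde\Sm_{\,\rm ex},\langle\tilde h,\tilde h\rangle,g(\tilde H,\tilde H),g(H,H)$ and $\langle h,h\rangle$ is exact. A sign slip in these factors is the most likely source of error; everything else is forced by Corollary~\ref{T-main01}.
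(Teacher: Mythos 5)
Your proposal is correct and follows essentially the same route as the paper: the authors likewise compute the $n=1$ reductions ($T=0$, $\Phi_h=0=\Sm_{\,\rm ex}$, $\Psi=H^\flat\otimes H^\flat$, $\widetilde{\cal A}=\epsilon_N\tilde A_N^2$, $\widetilde{\cal T}=\epsilon_N(\tilde T^\sharp_N)^2$, etc.) and substitute them into \eqref{E-main-0i} and its dual. Your explicit tracing of \eqref{E-main-ii} is just a repackaging of the paper's step of writing every $(0,2)$-tensor on the line field $\widetilde{\cal D}$ as a multiple of $\tilde g$ and reading off the scalar coefficient, and your cancellation bookkeeping reproduces \eqref{E-main-2i} exactly.
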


\begin{proof}
 An easy computation shows that
\begin{eqnarray}\label{E-Jmix-dim1}
\nonumber
 \widetilde{\cal A} \eq \epsilon_{N} \tilde A_N^2,\quad
  \<\tilde h_{sc} N,\,\tilde H\> = \tilde\tau_1\tilde h_{sc},\quad \Psi = H^\flat\otimes H^\flat,\quad
  \widetilde\Psi = (\epsilon_{N} \tilde\tau_2 - \< \tilde T , \tilde T \> )\,\tilde g,\\
\nonumber
 {\cal A} \eq g(H,H) \,\widetilde\id,\quad
 {\cal T}  = 0,\quad
 \<h,\,H\> = g(H,H) \tilde g ,\\
\nonumber
 H\eq \epsilon_{N} \nabla_N\,N,\quad h=H\,\tilde g,\quad \<h,h\>=g(H,H) , \\
 \tilde H\eq \epsilon_{N} \tilde\tau_1 N,\quad \tilde\tau_1= \epsilon_{N} \tr_{g}\tilde h_{sc},\quad
 \< \tilde h , \tilde h \> = \epsilon_{N} \tilde\tau_2,\quad
  {\rm Def}_{\widetilde{\cal D}}\,\tilde H = \epsilon_{N} N(\tilde\tau_1)\,\tilde g\,.
\end{eqnarray}
 Notice that $(H^\flat\otimes H^\flat)(X,Y)=g(H,X)\,g(H,Y)$.
Introducing the values \eqref{E-Jmix-dim1} and
\[
 \Phi_h=0=\Sm_{\,\rm ex},\quad
 \widetilde{\Sm}_{\,\rm ex}=\epsilon_{N} ( \tilde\tau_1^2-\tilde\tau_2),\quad
 \widetilde{\cal T}=\epsilon_{N} \tilde T^{\sharp\,2}_N
\]
into \eqref{E-main-0i} yields~\eqref{E-main-1i}. Introducing the values \eqref{E-Jmix-dim1} and
\[
 h =H\,\tilde g,\quad
 \Phi_{\tilde h} = \epsilon_{N} (\tilde\tau_1^2-\tilde\tau_2)\,\tilde g,\quad
 \Phi_{\tilde T} = -\< \tilde T , \tilde T \>\,\tilde g
\]
into equation dual to \eqref{E-main-0i} yields \eqref{E-main-2i}.
\end{proof}

By \eqref{E-divP}, we have $\Div \tilde h = N(\tilde h_{sc}) - \tilde\tau_1\tilde h_{sc}$ and $\Div h =(\Div H)\,\tilde g$.
 Then, see \eqref{E-genRicN}$_1$ and \eqref{eq-ran-ex},
\begin{eqnarray}\label{E-RicNs1aa}
\nonumber
 && \epsilon_{N} \big(R_N + \tilde A_N^2+(\tilde T^\sharp_N)^2\big)^\flat
 = N(\tilde h_{sc}) -H^\flat\otimes H^\flat +{\rm Def}_{\cal D}\,H,\\
 && \epsilon_{N}\Ric_{N}
 = \epsilon_{N}\Div(\nabla_N\,N) + \epsilon_{N} (N(\tilde\tau_1) -\tilde\tau_2) + \< \tilde T, \tilde T \>.
\end{eqnarray}
Remark that \eqref{E-RicNs1aa}$_2$ is simply the trace of \eqref{E-RicNs1aa}$_1$.

 A flow of a unit vector
 $N$
 is \textit{geodesic} if the orbits are geodesics ($h=0$),
 and
is \textit{Riemannian} if the metric is bundle-like ($\tilde h=0$).
A nonsingular Killing vector clearly defines a Riemannian flow;
moreover, a Killing vector of unit length generates a geodesic Riemannian~flow.
A manifold with such $N$-flow is called \textit{Sasakian} if
the sectional curvature of every section containing $N$ equals one,
in other words, its curvature satisfies the following condition:
\[
 R(X,N)Y=g(N,Y)X-g(X,Y)N\,.
\]

\begin{cor}[of Theorem~\ref{T-TG-Rfol}]
Let a unit vector field $N$ generates a geodesic Riemannian flow on a pseudo-Riemannian manifold $(M^{p+1},g)$.
If $g$ is critical for the action \eqref{E-Jmix-N} with respect to $g^\perp$-variations then
\begin{equation}\label{E-1geod-Riem}
 R_N = (1/p)\,\Ric_{N}\id^\perp,\quad {\rm where}\ \Ric_{N}={\rm const}\ \ {\rm when}\ \ p\ne4.
\end{equation}
Moreover,
if~$p$ is odd then $K_{\rm mix}=0$ and $M$ splits, and if $K_{\rm mix}\ne0$ then $p$ is even
and for $p\ne4$ $K_{\rm mix}$ is a function of a point only.
\end{cor}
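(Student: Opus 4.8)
The plan is to specialize the proof of Theorem~\ref{T-TG-Rfol} to $n=1$ and then to read off the extra structural information carried by the skew-symmetric operator $\tilde T^\sharp_N$. First I would translate the hypotheses: a geodesic Riemannian flow means $h=0$ (geodesic orbits) and $\tilde h=0$ (bundle-like metric), while $T=0$ holds automatically for a one-dimensional $\widetilde{\cal D}$. Hence $H=0$, $\tilde H=0$, $\tilde A_N=0$, $\tilde h_{sc}=0$ and $\tilde\tau_1=\tilde\tau_2=0$, so in \eqref{E-Jmix-dim1} only the terms built from $R_N$ and $\tilde T^\sharp_N$ survive.

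Next I would reproduce, directly from the flow identities (so that no nowhere-integrability of ${\cal D}$ is needed), the two equations that drove Theorem~\ref{T-TG-Rfol}. With the above vanishings the fundamental identity \eqref{E-RicNs1aa}$_1$ collapses to $R_N=-(\tilde T^\sharp_N)^2$, that is $r_{\cal D}=-\widetilde{\cal T}^\flat$ (compare \eqref{E-geodRD-b}), whereas the Euler--Lagrange equation \eqref{E-main-1i} collapses to $r_{\cal D}-\widetilde{\cal T}^\flat=\tfrac12(\Sm_{\rm mix}-\Sm^*_{\rm mix}(\Omega,g))\,g^\perp$ (compare \eqref{E-main-i-fol}). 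Adding them, exactly as \eqref{E-rD-Sm-1} is obtained, gives $r_{\cal D}=\tfrac14(\Sm_{\rm mix}-\Sm^*_{\rm mix}(\Omega,g))\,g^\perp$; taking the $g$-trace and using $\tr_g r_{\cal D}=\Sm_{\rm mix}$ and $\tr_g g^\perp=p$ yields $(p-4)\,\Sm_{\rm mix}=p\,\Sm^*_{\rm mix}(\Omega,g)$. Substituting back shows $r_{\cal D}=(\Sm_{\rm mix}/p)\,g^\perp$ for every $p$, and for $p\ne4$ forces $\Sm_{\rm mix}=\const$. Rewriting via $r_{\cal D}=\epsilon_N R_N^\flat$ and $\Sm_{\rm mix}=\epsilon_N\Ric_N$ produces \eqref{E-1geod-Riem}, with $\Ric_N=\const$ when $p\ne4$; in particular every mixed sectional curvature equals $K_{\rm mix}=\Sm_{\rm mix}/p$, independent of the plane.

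For the ``moreover'' part I would combine $R_N=-(\tilde T^\sharp_N)^2$ with the just-proved $R_N=(\Ric_N/p)\,\id^\perp$ to obtain $(\tilde T^\sharp_N)^2=-(\Ric_N/p)\,\id^\perp$, a scalar multiple of the identity which vanishes precisely when $K_{\rm mix}=0$. Since $\tilde T^\sharp_N$ is $g$-skew-symmetric one has $\det\tilde T^\sharp_N=(-1)^p\det\tilde T^\sharp_N$, so for odd $p$ the operator $\tilde T^\sharp_N$ is singular; but if $K_{\rm mix}\ne0$ then $(\tilde T^\sharp_N)^2$ is invertible, hence so is $\tilde T^\sharp_N$, forcing $p$ even. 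Contrapositively, odd $p$ gives $K_{\rm mix}=0$, i.e. $(\tilde T^\sharp_N)^2=0$; when $g$ is definite on ${\cal D}$ this yields $\tilde T^\sharp_N=0$, so ${\cal D}$ is integrable and, together with $h=\tilde h=0$, both complementary distributions are totally geodesic and integrable, whence $M$ splits locally by the de Rham theorem and $R_N=0$. Finally, for $p\ne4$ the constancy of $\Sm_{\rm mix}$ makes $K_{\rm mix}=\Sm_{\rm mix}/p$ a function of the point alone.

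The hard part will be the passage from $(\tilde T^\sharp_N)^2=0$ to $\tilde T^\sharp_N=0$, and thus to integrability of ${\cal D}$ and the splitting of $M$: this uses definiteness of $g$ on ${\cal D}$, since in indefinite signature a $g$-skew-symmetric operator may be nilpotent without vanishing. The parity dichotomy itself is signature-free (it rests only on the determinant identity above), but the splitting conclusion is cleanest when $g|_{\cal D}$ is definite, e.g. for a timelike $N$ in a Lorentzian manifold, where ${\cal D}$ is spacelike; the genuinely indefinite case would require analysing the real normal form of $\tilde T^\sharp_N$ with respect to the indefinite inner product.
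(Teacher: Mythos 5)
Your proof is correct and follows essentially the same route as the paper: it specializes Theorem~\ref{T-TG-Rfol} (re-deriving \eqref{E-geodRD-b} and \eqref{E-main-i-fol} for the flow, adding them, and tracing) to get \eqref{E-1geod-Riem}, and then uses the parity argument for the $g$-skew-symmetric operator $\tilde T^\sharp_N$ together with $R_N=-(\tilde T^\sharp_N)^2=(\Ric_N/p)\,\id^\perp$ for the splitting. Your closing caveat is well taken: the implication $(\tilde T^\sharp_N)^2=0\Rightarrow\tilde T=0$ does require $g$ to be definite on ${\cal D}$ (a $g$-skew-symmetric operator can be nilpotent without vanishing in indefinite signature), a hypothesis the paper's own proof uses implicitly without stating.
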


\begin{proof}
By Theorem~\ref{T-TG-Rfol}, we have \eqref{E-1geod-Riem}, and \eqref{E-geodRD-b} reads $R_N =-(\tilde T^\sharp_N)^2$.
Tracing this we obtain $\epsilon_N\Ric_{N}=\<\tilde T,\tilde T\>$.
In our case, \eqref{E-S-star} reads
\begin{equation*}
 \Sm^*_{\rm mix} = \Big\{\begin{array}{cc}
 \frac{p-4}p\,\<\tilde T,\tilde T\> & ({\rm for}~g^\perp\mbox{\rm-variations}),\\
 3\,\<\tilde T,\tilde T\>  & ({\rm for}~\tilde g\mbox{\rm-variations})\,.
 \end{array}
\end{equation*}
For a geodesic Riemannian $N$-flow, \eqref{E-main-1i}--\eqref{E-main-2i} reduce to
\begin{eqnarray*}
 \epsilon_{N} \big( R_N -(\tilde T^\sharp_N)^2 \big)^\flat \eq \frac12\,\big( \epsilon_{N} \Ric_{N} -\Sm^*_{\rm mix}(\Omega,g)\big)\,g^\perp
 \ \ ({\rm for}~g^\perp\mbox{\rm-variations}),\\
 \epsilon_{N} \Ric_{N} \eq -\Sm^*_{\rm mix}(\Omega,g) +4\,\< \tilde T , \tilde T \>
 \qquad ({\rm for}~\tilde g\mbox{\rm-variations}).
\end{eqnarray*}
For $p$ odd, the skew-symmetric operator $\tilde T^\sharp_N$ has zero eigenvalue; hence, $R_N=0=\tilde T$;
and by de Rham Decomposition Theorem, $(M,g)$ splits.
\end{proof}

\smallskip

Finally, remark that we can examine codimension-one foliations and distributions with critical metrics for other actions with respect to adapted variations, for example, \eqref{Jwidetildeex}.
Since the case of $p=1$ is trivial for this action, we consider $n=1$ instead.
Next result provides applications to foliations whose leaves have constant second mean curvature, see \cite[Section~1.1.1]{rw-m}.

\begin{prop}
Let
$\widetilde{\cal D}$ be spanned by a unit vector field $N$ on a complete pseudo-Riemannian manifold~$(M,g)$.
If $g$ is critical for the action \eqref{Jwidetildeex} with respect to adapted variations then
$\widetilde{\Sm}_{\,\rm ex}(\Omega, g)\le0$ and
\begin{equation}\label{tau2extrinsic}
 \tilde\tau_{1} = 0,\quad \tilde\tau_{2} = \const.
\end{equation}
\end{prop}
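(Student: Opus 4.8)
The plan is to specialise the Euler--Lagrange equations \eqref{CorEwidetildeex-main-0i}--\eqref{CorEwidetildeex-main-0i-dual} for $J_{\rm\widetilde{ex},\Omega}$ to the flow case $n=1$ and to extract from them a single scalar Riccati equation for $\tilde\tau_1$ along the orbits of $N$, which completeness then integrates away. First I would record the $n=1$ dictionary of Section~\ref{subsec:dim1fol}: here $\widetilde{\Sm}_{\,\rm ex}=\epsilon_N(\tilde\tau_1^2-\tilde\tau_2)$, $\tilde H=\epsilon_N\tilde\tau_1 N$, $\langle\tilde h,\tilde h\rangle=\epsilon_N\tilde\tau_2$ and $\Div\tilde H=\epsilon_N\big(N(\tilde\tau_1)-\tilde\tau_1^2\big)$; I also assume $p>1$, so that the action is nontrivial and the factor $1/(p-1)$ in \eqref{CorEwidetildeex-main-0i} makes sense.

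Since $\dim\widetilde{\cal D}=1$, the $\tilde g$-variation lives on the one-dimensional space $\mathfrak{M}_{\widetilde{\cal D}}$, so the tensor identity \eqref{CorEwidetildeex-main-0i-dual} reduces to its trace; its only content is the scalar relation $\widetilde{\Sm}_{\,\rm ex}=\widetilde{\Sm}\,_{\rm ex}^{*}(\Omega,g)$ coming from the clause $n\ne2$, which in particular makes $\widetilde{\Sm}_{\,\rm ex}$ constant. Substituting $\widetilde{\Sm}_{\,\rm ex}=\widetilde{\Sm}\,_{\rm ex}^{*}(\Omega,g)$ into the trace \eqref{trEwidetildeex-main-0i}$_1$ of the $g^\perp$-equation makes its right-hand side vanish, so $\Div\tilde H=0$, i.e.
\[
 N(\tilde\tau_1)=\tilde\tau_1^2 .
\]
(In the non volume-preserving case this reads $\widetilde{\Sm}_{\,\rm ex}\equiv0$, and the same trace still gives $\Div\tilde H=0$.)

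The decisive step is to upgrade this Riccati identity to $\tilde\tau_1\equiv0$. Restricting to an orbit $\gamma(s)$ of the unit field $N$, the function $u(s):=\tilde\tau_1(\gamma(s))$ obeys $u'=u^2$, whose nonzero solutions $u(s)=u_0\,\big(1-u_0(s-s_0)\big)^{-1}$ blow up at the finite parameter $s_0+1/u_0$ (forward if $u_0>0$, backward if $u_0<0$). As $(M,g)$ is complete, the orbits of $N$ exist for all $s$ and $\tilde\tau_1$, being smooth on $M$, stays finite along them; the contradiction is avoided only if $u_0=0$, whence $\tilde\tau_1\equiv0$, which is \eqref{tau2extrinsic}$_1$. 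I expect this completeness/blow-up argument to be the main obstacle: one must guarantee that the $N$-orbits are complete (automatic for Riemannian $g$, since unit-speed curves of finite length extend by metric completeness, and to be read as the meaning of ``complete'' in the indefinite setting).

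Finally, with $\tilde\tau_1=0$ one has $\tilde H=0$, hence $\widetilde{\Sm}_{\,\rm ex}=-\langle\tilde h,\tilde h\rangle=-\epsilon_N\tilde\tau_2$; since $\widetilde{\Sm}_{\,\rm ex}$ is constant this yields $\tilde\tau_2=\const$, completing \eqref{tau2extrinsic}. Moreover $\widetilde{\Sm}_{\,\rm ex}=-\langle\tilde h,\tilde h\rangle$ is nonpositive once the metric induced on ${\cal D}$ is definite (in particular for Riemannian $g$, where $\tilde\tau_2=\tr\tilde A_N^{\,2}\ge0$), and being constant it equals its own mean, so $\widetilde{\Sm}_{\,\rm ex}(\Omega,g)=\widetilde{\Sm}_{\,\rm ex}\le0$, as claimed.
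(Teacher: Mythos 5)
Your proposal is correct and follows essentially the same route as the paper: the $\tilde g$-variation equation (with $n=1\ne 2$) forces $\widetilde{\Sm}_{\,\rm ex}=\widetilde{\Sm}\,_{\rm ex}^{*}(\Omega,g)=\const$, the trace of the $g^\perp$-variation equation then yields the Riccati equation $N(\tilde\tau_1)=\tilde\tau_1^2$, completeness kills $\tilde\tau_1$, and constancy of $\widetilde{\Sm}_{\,\rm ex}=-\epsilon_N\tilde\tau_2$ gives $\tilde\tau_2=\const$. Your version is, if anything, slightly more explicit than the paper's on the finite-time blow-up argument and on the fact that the sign claim $\widetilde{\Sm}_{\,\rm ex}(\Omega,g)\le0$ rests on definiteness of $g$ on ${\cal D}$.
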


\begin{proof}
From \eqref{CorEwidetildeex-main-0i} we obtain
\[
 \nabla_{N}\tilde h_{sc} -\tilde\tau_{1}\tilde h_{sc}
 +\epsilon_{N} [\,{\tilde T}^{\sharp}_{N},{\tilde A}_{N}]^\flat = 0.
\]
Tracing the above yields $N( \tilde\tau_{1} ) = \tilde\tau_{1}^{2}$, and in view of completeness of the metric,
the only solution is $\tilde\tau_{1}=0$, hence \eqref{tau2extrinsic}$_1$.
From \eqref{CorEwidetildeex-main-0i-dual}  with $n=1$
and $\Phi_{\tilde h}=\epsilon_{N}(\tilde\tau_1^2-\tilde\tau_2)\,\tilde g$
we obtain
\[
 \tilde\tau_{1}^{2} - \tilde\tau_{2}  = \epsilon_{N}\widetilde{\Sm}_{\,\rm ex},\quad
 \widetilde{\Sm}_{\,\rm ex}=\widetilde{\Sm}_{\,\rm ex}(\Omega, g),
\]
which together with $\widetilde{\Sm}_{\,\rm ex} = \widetilde{\Sm}\,_{\rm ex}^{*}$
and $\tilde\tau_{1}=0$ yields $\tilde\tau_{2} = -\epsilon_{N} \widetilde{\Sm}\,_{\rm ex}^{*}(\Omega, g)$.
Hence critical metrics of \eqref{Jwidetildeex} with respect to adapted variations
are those with constant $\tilde\tau_{2}$.
\end{proof}

For $n=1$ the critical metrics of the action \eqref{Jwidetildeex} with respect to adapted variations
also satisfy the differential equation
\[
 \nabla_{N}\,\tilde h_{sc} + \epsilon_{N}[\,{\tilde T}^{\sharp}_{N}, {\tilde A}_{N}]^\flat = 0,
\]
which in the case of integrable ${\cal D}$, together with \eqref{tau2extrinsic} yield the system of equations
studied in Section~\ref{subsec:codim1fol}, see \eqref{nablahsc0}
with interchanged $\cal D$ and $\cal \widetilde{D}$, and ${\tilde\tau_{2}}(\Omega, g)$ in place of $-\Ric_{N}(\Omega,g)$.

\subsection{Codimension-one foliations}
\label{subsec:codim1fol}

The structure theory and dynamics of codimension-one foliations on manifolds are fairly well understood.
The simplest examples of codimension-one foliations are the level surfaces of a function $u:M \to\RR$ with no critical points.
Geometric properties of such foliations correspond to analytic properties of their defining functions.
As a particular example one can consider isoparametric functions.
In the section we analyze
adapted critical metrics of the action \eqref{E-Jmix} for codimension-one foliations.

Let $\calf$ be a codimension-one foliation
with a~normal $N\in{\mathfrak X}_M$ of a pseudo-Riemannian manifold $(M^{n+1},g)$.
Assume that $|g(N,N)|=1$ and denote $\epsilon_{N} = g(N,N)$.
 We have, see \eqref{E-Rictop2},
\[
 {r}_{\,\cal D} =\epsilon_{N}\Ric_{N}\,g^\perp,\quad
 {r}_{\,\calf} =\epsilon_{N}(R_N)^\flat,
\]
where $R_N\!=R(N,\,\cdot\,)N$ is the {Jacobi operator} and $\Ric_{N}=\sum_a \eps_a\,g(R_N(E_a),E_a)$.
Then again the action \eqref{E-Jmix} reduces itself to \eqref{E-Jmix-N}.
 Let $h_{sc}$ be the scalar second fundamental form, and $A_N$ the Weingarten operator of ${\calf}$.
We~have $T=0=\tilde T$ and
\[
 h_{sc}(X,Y)=\epsilon_{N}\,g(\nabla_X\,Y,\,N),\quad A_N(X)=-\nabla_X\,N,\quad
 (X,Y\in T\calf).
\]
Define the functions $\tau_i = \tr A_N^i\ (i\ge0)$, see \cite{rw-m},
which can be expressed using the elementary symmetric functions $\sigma$'s,
\[
 \det(\id+\,t\,A_N)=\sum\nolimits_{\,k\le n}\sigma_{k}\,t^{k},
\]
called \textit{mean curvatures}.
For example, $\tau_1 =\epsilon_{N}\tr h_{sc}$ is the mean curvature of~${\calf}$ and
\[
 \tau_1=\sigma_1=\tr A_N=-\Div N,\quad
 \tau_2=\sigma_1^2-2\,\sigma_2=\tr A_N^2.
\]
Notice that
${\cal A}=\epsilon_{N}A_N^2$ and $\widetilde{\cal A} =g(\tilde H,\tilde H) N$, where
 $\tilde H=\epsilon_{N}\nabla_{N}\,N$
is the curvature vector of $N$-curves.
By \eqref{E-genRicN-2}$_1$
and $\widetilde\Psi=\tilde H^\flat\otimes\tilde H^\flat$,
we obtain
\begin{equation}\label{E-genRicN-p1}
 \epsilon_{N}(R_N + A_N^2)^\flat = \nabla_N\,h_{sc} -\tilde H^\flat\otimes \tilde H^\flat
 +\epsilon_{N}{\rm Def}_{\calf}(\tilde H).
\end{equation}
Then we find, taking trace of \eqref{E-genRicN-p1}, that (see also \cite{rw-m,wa1})
\begin{equation}\label{eq-ran1}
 \Ric_{N} = N(\tau_1)-\tau_2 +\Div \tilde H.
\end{equation}
 It is easy to see that \eqref{E-S*main} takes the form
\begin{equation}\label{E-S*-1}
 \Sm^*_{\rm mix} = \epsilon_{N}\Ric_{N}
 -2\,\epsilon_{N}\left\{\begin{array}{cc}
  N(\tau_1)-\tau_2 & ({\rm for}~g^\perp\mbox{\rm-variations}),\\
  \frac1n\Div\tilde H & ({\rm for}~\tilde g\mbox{\rm-variations})\,.
\end{array}\right.
\end{equation}

From Theorem~\ref{T-main00} (or Corollary~\ref{T-main01}) we obtain the following.

\begin{prop}[\bf Euler-Lagrange equations]\label{T-main02}
Let $\calf$ be a codimension-one foliation of a pseudo-Riemannian manifold $(M^{n+1},g)$,
whose normal distribution ${\cal D}$ is spanned by a unit vector field $N$.
If $g$ is critical for the action \eqref{E-Jmix-N} with respect to adapted variations then
\begin{eqnarray}\label{E-main-i2}
 &&\hskip-13mm
 \Ric_{N} + \,\epsilon_{N}\Sm^*_{\rm mix}(\Omega,g) -(N(\tau_1)-\tau_1^2) -\Div\tilde H = 0
 \quad ({\rm for}~g^\perp\mbox{\rm-variations}),\\
\label{E-main-ii1}
\nonumber
 &&\hskip-13mm
 \epsilon_{N}(R_N +A_N^2)^\flat-\tau_1 h_{sc} +\tilde H^\flat\otimes\tilde H^\flat
 -\epsilon_{N}\,{\rm Def}_{\calf}(\tilde H) \\
 && =\frac12\,\big(\epsilon_{N}\Ric_{N} - \Sm^*_{\rm mix}(\Omega,g) +\epsilon_{N}\Div(\tau_1 N-\tilde H) \big)\,\tilde g
 \quad ({\rm for}~\tilde g\mbox{\rm-variations}).
\end{eqnarray}
One may rewrite \eqref{E-main-i2}\,--\,\eqref{E-main-ii1} equivalently,
using \eqref{E-genRicN-p1}\,--\,\eqref{eq-ran1}, as
\begin{eqnarray}\label{E-RicNs0F}
 &&\tau_1^{2}-\tau_2 = -\,\epsilon_{N}\Sm^*_{\rm mix}(\Omega,g) \quad ({\rm for}~g^\perp\mbox{\rm-variations}),\\
\label{E-RicNs1F}
 &&\hskip-13mm \nabla_N h_{sc} {-}\tau_1 h_{sc} = \frac12\,\big(
 2\,\epsilon_{N}(N(\tau_1)-\tau_1^2)
 +\epsilon_{N}(\tau_1^{2}-\tau_2) -\Sm^*_{\rm mix}(\Omega,g)\big)\,\tilde g
 \ \ ({\rm for}~\tilde g\mbox{\rm-variations}).
\end{eqnarray}
\end{prop}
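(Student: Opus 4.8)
The plan is to derive both pairs of formulas by specializing the general Euler--Lagrange equations of Corollary~\ref{T-main01} (equivalently, Theorem~\ref{T-main00}) to the codimension-one case $p=1$, and then to recast them through the Riccati-type identity \eqref{E-genRicN-p1} and its trace \eqref{eq-ran1}. First I would record the simplifications forced by $\dim{\cal D}=1$. Since ${\cal D}$ is spanned by $N$ it is integrable, so $T=0=\tilde T$ and hence $\Phi_T=\Phi_{\tilde T}=\widetilde{\cal T}=0$ and $\tilde{\cal K}=0$; a one-dimensional ${\cal D}$ also forces $\Phi_{\tilde h}=0$ and $\widetilde\Sm_{\,\rm ex}=0$. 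Then I would substitute the explicit codimension-one data already assembled in the section: $r_{\cal D}=\epsilon_N\Ric_N\,g^\perp$, $r_\calf=\epsilon_N(R_N)^\flat$, ${\cal A}=\epsilon_N A_N^2$, $H=\epsilon_N\tau_1 N$, $\tilde H=\epsilon_N\nabla_N N$, $\<h,H\>=\tau_1 h_{sc}$, $\widetilde\Psi=\tilde H^\flat\otimes\tilde H^\flat$, $\Sm_{\rm mix}=\epsilon_N\Ric_N$ and $\Sm_{\,\rm ex}=\epsilon_N(\tau_1^2-\tau_2)$, together with the reduced form \eqref{E-S*-1} of $\Sm^*_{\rm mix}$.

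For the $g^\perp$-variations I would exploit that, ${\cal D}$ being one-dimensional, the tensor equation \eqref{E-main-i} carries no more information than its $g$-trace over ${\cal D}$; it is cleanest to trace instead the pre-Ricci form \eqref{E-main-0ih-temp}, which here collapses to $\Div\tilde h+\Phi_h=\tfrac12(\dots)\,g^\perp$ (all the $T$-, $\widetilde{\cal T}$- and $\tilde{\cal K}$-terms vanishing). Using $\tr_g\Phi_h=\Sm_{\,\rm ex}$, $\tr_g g^\perp=1$, the relation $\Div H=\epsilon_N(N(\tau_1)-\tau_1^2)$, and the trace of the divergence term $\Div\tilde h$, the tensor identity becomes the single scalar equation \eqref{E-main-i2}. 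For the $\tilde g$-variations the equation lives on the $n$-dimensional $T\calf$ and remains a genuine tensor identity: inserting the same data into the dual equation \eqref{E-main-ii}, grouping $r_\calf+{\cal A}^\flat=\epsilon_N(R_N+A_N^2)^\flat$, and noting $\Phi_{\tilde h}=0$, yields \eqref{E-main-ii1}.

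The equivalent reformulation is where the structure becomes transparent, and I would carry it out by substituting the Codazzi/Riccati identity \eqref{E-genRicN-p1}, which reads $\epsilon_N(R_N+A_N^2)^\flat=\nabla_N h_{sc}-\tilde H^\flat\otimes\tilde H^\flat+\epsilon_N{\rm Def}_\calf(\tilde H)$, into \eqref{E-main-ii1}. The terms $\tilde H^\flat\otimes\tilde H^\flat$ and $\epsilon_N{\rm Def}_\calf(\tilde H)$ then cancel against their counterparts, leaving $\nabla_N h_{sc}-\tau_1 h_{sc}$ on the left; replacing $\Ric_N$ and $\Div\tilde H$ on the right through the trace relation \eqref{eq-ran1}, $\Ric_N=N(\tau_1)-\tau_2+\Div\tilde H$, and tidying the mean-curvature divergences gives \eqref{E-RicNs1F}. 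Applying the same relation \eqref{eq-ran1} to \eqref{E-main-i2} eliminates the curvature term outright and returns the purely extrinsic constraint \eqref{E-RicNs0F}, namely $\tau_1^2-\tau_2=-\epsilon_N\Sm^*_{\rm mix}(\Omega,g)$.

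I expect the principal difficulty to be bookkeeping rather than conceptual: keeping track of the sign factor $\epsilon_N$ (the normal $N$ may be timelike) as it is carried through every trace, divergence and mean-curvature substitution, and confirming that the specialized forms of ${\rm Def}_\calf(\tilde H)$, $\tilde H^\flat\otimes\tilde H^\flat$ and the divergences $\Div(H-\tilde H)$ each acquire exactly the factor that makes the cancellation in the reformulation step precise. Once these substitutions are fixed, the content is immediate: criticality under $g^\perp$-variations reduces to a scalar extrinsic identity, while criticality under $\tilde g$-variations is the tensor equation $\nabla_N h_{sc}-\tau_1 h_{sc}\propto\tilde g$.
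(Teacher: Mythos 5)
Your proposal is correct and follows essentially the same route as the paper, which offers no separate argument for this proposition beyond "From Theorem~\ref{T-main00} (or Corollary~\ref{T-main01}) we obtain the following": one specializes the general Euler--Lagrange equations to $p=1$ using exactly the codimension-one data ($T=\tilde T=0$, $r_{\cal D}=\epsilon_N\Ric_N\,g^\perp$, $\Sm_{\,\rm ex}=\epsilon_N(\tau_1^2-\tau_2)$, $\widetilde\Sm_{\,\rm ex}=0$, etc.) assembled just before the statement, and then rewrites via \eqref{E-genRicN-p1} and \eqref{eq-ran1}. You also correctly identify the only real work as the $\epsilon_N$ sign bookkeeping through the traces and divergences; nothing further is needed.
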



\begin{rem}\rm
Note that adapted variations provide the same Euler-Lagrange equations as in \cite{bdrs},
where the action \eqref{E-Jmix-N} was examined in a foliated globally hyperbolic space-time,
the  Euler-Lagrange equa\-tions (called the mixed gravitational field equations) were derived using variation formulas for the curvature tensor, then their linea\-rization and solution for an empty space have been obtained.
There, ${\cal D}$ was spanned by a unit (for initial metric $g$), time-like vector field $N$ with integrable orthogonal distribution~$\widetilde{\cal D}$.
Equations \eqref{E-RicNs0F} and \eqref{E-RicNs1F} are there formulated in terms of a newly introduced tensor $\Ric_{\cal D}(g)$, whose trace is denoted by ${\rm Scal}_{\cal D}(g)$.
For unit vectors $X,Y\in\widetilde{\cal D}$ we have
\begin{eqnarray*}
 && \Ric_{\cal D}(g)(X,Y) = \big(\nabla_{N}\,{h}_{sc} -{\tau}_{1}{h}_{sc}\big)(X,Y),\\
 && \Ric_{\cal D}(g) (X,N) = \Div({A}_N(X)),\quad
 \Ric_{\cal D}(g) (N,X) = - \Div({A}_N(X)),\\
 && \Ric_{\cal D}(g) (N,N) = - \Div H,
\end{eqnarray*}
and the Euler-Lagrange equations
for the action \eqref{E-Jmix-N} take the following form:
\begin{equation}\label{E-gravity}
 \Ric_{\cal D}(g) - \frac12\,{\rm Scal}_{\cal D}(g)\,g
 +\Ric_{N}\big(N^{\flat}\otimes N^{\flat} +\frac{1}{2}\,g\big) = 0\,.
\end{equation}
Since one should actually use only the symmetric part of $\Ric_{\cal D}(g)$ in \eqref{E-gravity}, its both sides vanish when evaluated on $(X,N)$. Also, \eqref{E-gravity} reduces to \eqref{E-RicNs0F} when evaluated on $(N,N)$ (with $\Sm^*_{\rm mix}(\Omega,g) =0$, because in \cite{bdrs} the volume preserving variations are not considered), while evaluating \eqref{E-gravity} on $X,Y\in\widetilde{\cal D}$ yields \eqref{E-RicNs1F}.
\end{rem}

\begin{lem}\label{L-trace-tau1}
Let $g$ be critical for \eqref{E-Jmix-N} with respect to adapted variations.
Then the function $\Div(\nabla_{N}\,N)$ is non-positive somewhere in $\Omega$,
and Euler-Lagrange equations \eqref{E-RicNs0F}\,--\,\eqref{E-RicNs1F} read
\begin{equation}\label{E-RicNs1F-2}
 \tau_1^{2}-\tau_2 = \Ric_{N}(\Omega,g) - 2\,\hat C,\qquad
 \nabla_N h_{sc} -\tau_1 h_{sc} = \frac{\epsilon_N}{n}\,\hat C\,\tilde g,
\end{equation}
where $\hat C\le0$ and $\tau_1$ is a global solution of the following ODE (on $N$-lines):
\begin{equation}\label{E-tau-const0}
 N(\tau_1)-\tau_1^2=\hat C.
\end{equation}
\end{lem}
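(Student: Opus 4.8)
The plan is to exploit that criticality with respect to adapted variations means — by the splitting $\mathfrak{M} = \mathfrak{M}_{\cal D}\oplus\mathfrak{M}_{\widetilde{\cal D}}$ — that both Euler--Lagrange equations of Proposition~\ref{T-main02} hold simultaneously. Since $T=\tilde T=0$ for a codimension-one foliation, formula \eqref{E-S*-1} applies, so the right-hand sides of \eqref{E-RicNs0F}--\eqref{E-RicNs1F} are assembled from $\Ric_N$, $N(\tau_1)$, $\tau_2$ and the constant mean values $\Sm^*_{\rm mix}(\Omega,g)$. First I would read off from \eqref{E-RicNs0F} that $\tau_1^2-\tau_2=-\epsilon_N\Sm^*_{\rm mix}(\Omega,g)$, hence that $\tau_1^2-\tau_2$ is a constant function on $\Omega$; this is the only content in the $g^\perp$-direction.

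Next I would take the $g$-trace of \eqref{E-RicNs1F} over $T\calf$. Using $\tr_g h_{sc}=\epsilon_N\tau_1$, the identity $\tr_g(\nabla_N h_{sc})=N(\tr_g h_{sc})=\epsilon_N N(\tau_1)$, and $\tr_g\tilde g=n$, the left side becomes $\epsilon_N(N(\tau_1)-\tau_1^2)$ while the right side is $\frac n2\big(2\epsilon_N(N(\tau_1)-\tau_1^2)+\epsilon_N(\tau_1^2-\tau_2)-\Sm^*_{\rm mix}(\Omega,g)\big)$. Because $\tau_1^2-\tau_2$ and the mean-value terms are already constant, solving this scalar relation for $N(\tau_1)-\tau_1^2$ (legitimate for $n\ne1$; the case $n=1$ is the flow situation of Section~\ref{subsec:dim1fol}) shows that $\hat C:=N(\tau_1)-\tau_1^2$ is a constant, which is precisely \eqref{E-tau-const0}. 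Re-inserting into \eqref{E-RicNs1F} the value of $\epsilon_N(\tau_1^2-\tau_2)-\Sm^*_{\rm mix}(\Omega,g)$ supplied by this trace collapses its right-hand side to $\frac{\epsilon_N}{n}\hat C\,\tilde g$, giving \eqref{E-RicNs1F-2}$_2$; and combining \eqref{E-RicNs0F} with the trace formula \eqref{eq-ran1}, namely $\Ric_N=N(\tau_1)-\tau_2+\Div\tilde H$, eliminates $\Sm^*_{\rm mix}(\Omega,g)$ in favour of $\Ric_N(\Omega,g)$ and yields $\tau_1^2-\tau_2=\Ric_N(\Omega,g)-2\hat C$, which is \eqref{E-RicNs1F-2}$_1$.

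For the sign, I would note that $\tau_1$ restricted to an integral curve of $N$ satisfies the Riccati equation $N(\tau_1)=\tau_1^2+\hat C$ with constant $\hat C$. If $\hat C>0$ then $N(\tau_1)\ge\hat C>0$ and the solution escapes to $+\infty$ in finite parameter-time; since $\tau_1$ is a genuine smooth function along the $N$-lines, this is impossible, forcing $\hat C\le0$. (It is also convenient to record $\hat C=\Div(\tau_1 N)$, since $\Div N=-\tau_1$.) Finally, substituting \eqref{E-RicNs1F-2}$_1$ back into \eqref{eq-ran1} gives $\Div\tilde H=(\Ric_N-\Ric_N(\Omega,g))+\hat C$, whose mean value over $\Omega$ is $\hat C\le0$; since $\nabla_N N=\epsilon_N\tilde H$, one concludes that $\Div(\nabla_N N)$ takes non-positive values somewhere in $\Omega$.

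I expect the genuinely non-formal step to be the sign $\hat C\le0$: everything else is tracing and back-substitution into already-established identities, but the Riccati argument is analytic and requires the $N$-lines to be long enough — forward-completeness of the $N$-flow — for the finite-time blow-up to be a real obstruction, the same completeness phenomenon exploited for flows earlier in the paper. A secondary point to treat with care is the bookkeeping of the two distinct mean values $\Sm^*_{\rm mix}(\Omega,g)$ attached to the $g^\perp$- and $\tilde g$-branches of \eqref{E-S*-1}, the exclusion $n\ne1$ when dividing in the trace step, and the factor $\epsilon_N$ relating $\Div(\nabla_N N)$ to $\Div\tilde H$ in the last paragraph.
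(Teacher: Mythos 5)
Your proposal is correct and follows essentially the same route as the paper: trace and integrate the two Euler--Lagrange equations to isolate a single constant $\hat C=N(\tau_1)-\tau_1^2$, rule out $\hat C>0$ via finite-time blow-up of the Riccati equation along complete $N$-lines, and back-substitute to get \eqref{E-RicNs1F-2}. The only cosmetic difference is that the paper organizes the mean-value relations as an explicit rank-$2$ linear system in $(N(\tau_1)-\tau_1^2)(\Omega,g)$, $(\tau_1^2-\tau_2)(\Omega,g)$ and $(\Div\tilde H)(\Omega,g)$ whose solution set is parametrized by $\hat C$, whereas you read the constancy of $N(\tau_1)-\tau_1^2$ directly off the trace of \eqref{E-RicNs1F} (using that $\tau_1^2-\tau_2$ is constant by \eqref{E-RicNs0F}), which requires the same $n\ne 1$ caveat you already flag.
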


\proof Denote~by
\[
 X=(N(\tau_1)-\tau_1^2)(\Omega,g),\ \ Y=(\tau_1^2-\tau_2)(\Omega,g),\ \
 Z=(\Div\tilde H)(\Omega,g),\ \ J=\Ric_{N}(\Omega,g).
\]
Integrating \eqref{E-RicNs0F} and using \eqref{E-S*-1}, we obtain $2X+Y=J$.
Integrating trace of \eqref{E-RicNs1F} and using \eqref{E-RicNs0F} and \eqref{E-S*-1}, we obtain $2(n-1)X+nY+2Z=nJ$.
The~rank 2 linear system
\[
 \{2X+Y=J,\quad 2(n-1)X+nY+2Z=nJ\}\quad ({\rm with\ variables}\ \  X,Y,Z)
\]
admits 1-parameter family of solutions $X=Z=\hat C,\ Y=J-2\,\hat C$, where $\hat C\in\RR$.
Note that the integral of identity \eqref{eq-ran1} is $X+Y+Z=J$, which is also satisfied by the above solution.
Hence, tracing \eqref{E-RicNs1F}, we obtain \eqref{E-tau-const0}.
If $\hat C\ge0$ then the only global solution of \eqref{E-tau-const0} is $\tau_1\equiv0$ (and hence, $\hat C=0$),
otherwise, i.e., $\hat C < 0$, any global solution $\tau_1(t)\ (t\in\RR)$ is bounded: $\tau_1^2(t)\le{|\hat C|}$
and given by
\begin{equation}\label{E-tau-N}
 \tau_1(t) = {|\hat C|}^{1/2}\,\Big(1-\frac{2({|\hat C|}^{1/2}-\tau_1^0)}
 {({|\hat C|}^{1/2}\!+\tau_1^0)\,e^{-2\,t\,{|\hat C|}^{1/2}}\!+{|\hat C|}^{1/2}\!-\tau_1^0}\Big),
  \ \ \tau_1(0)=\tau_1^0\in [-{|\hat C|}^{\frac12},{|\hat C|}^{\frac12}],
\end{equation}
including constant solutions $\tau_1\equiv\pm\,{|\hat C|}^{1/2}$.
Since $Z\le0$, the function $\Div(\nabla_{N}\,N)$ is non-positive somewhere in $\Omega$,
and \eqref{E-RicNs0F}\,--\,\eqref{E-RicNs1F} read as \eqref{E-RicNs1F-2}.
\qed

\smallskip

A~coordinate system $(x_0,x_1,\ldots,x_n)$
such that the leaves are given by $\{x_0=c\}$ and $N$-curves are $x_1=c_1,\ldots,x_n=c_n$
is called a \textit{biregular foliated chart}\index{biregular foliated coordinates}, see \cite[Section~5.1]{cc1}.
If~$(M, \calf, g)$ is a foliated pseudo-Riemannian manifold and $N$ is the unit normal, then
in bire\-gular foliated coordinates
the metric $g$ has the form
 $g=g_{00}\,dx_0^2+\sum\nolimits_{i,j>0}g_{ij}\,dx_i dx_j$.
Denote by
$g_{ij,k}$ the derivative of $g_{ij}$ in the $\partial_k$-direction.
For orthogonal biregular foliated coordinates we have
$g_{00}=\epsilon_N |g_{00}|$, $g_{ii}=\epsilon_i |g_{ii}|$ and $g_{ij}=0\ (i\ne j)$.

\begin{lem}\label{lem-biregG}
For a pseudo-Riemannian metric in orthogonal $($i.e., $g_{ij}=\delta_{ij}\,g_{ii})$
biregular foliated coordinates of a codimension-one foliation on $(M,g)$, one has
\begin{eqnarray*}
 N \eq \partial_0/\sqrt{|g_{00}|}\quad\mbox{\rm(the unit normal)},\\
 h_{ij}\eq
\Gamma^0_{ij}\sqrt{g_{00}}=
 -\frac12\,\epsilon_N\,\delta_{ij}\,g_{ii,0}/\sqrt{|g_{00}|}
 \quad\mbox{\rm(the second fundamental form)},\\
 A^j_{i}\eq
-\Gamma^j_{i0}/\sqrt{|g_{00}|}=
 -\frac{1}{2\,\sqrt{|g_{00}|}}\,\delta_{i}^{j}\,\frac{g_{ii,0}}{g_{ii}}
 \quad\mbox{\rm(the Weingarten operator)},\\
 \tau_1 \eq -\frac{1}{2\sqrt{|g_{00}|}}\sum\nolimits_{\,i>0}\,\frac{g_{i i,0}}{g_{i i}},\quad
 \tau_2=\frac{1}{4\,|g_{00}|}\sum\nolimits_{\,i>0}\,\Big(\frac{g_{i i,0}}{g_{i i}}\Big)^2,\quad \mbox{\rm etc}.
\end{eqnarray*}
\end{lem}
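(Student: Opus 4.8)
The plan is to reduce the whole statement to a direct computation of the Christoffel symbols of the diagonal metric $g=g_{00}\,dx_0^2+\sum_{i>0}g_{ii}\,dx_i^2$ in the given biregular coordinates. First I would pin down the unit normal. The leaves $\{x_0=c\}$ are spanned by $\partial_1,\dots,\partial_n$, and since $g_{0i}=0$ for $i>0$ the field $\partial_0$ is $g$-orthogonal to $T\calf$; normalizing and using $g(\partial_0,\partial_0)=g_{00}=\epsilon_N|g_{00}|$ yields $N=\partial_0/\sqrt{|g_{00}|}$ with $g(N,N)=\epsilon_N$, which is the first assertion.

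Next I would record the only Christoffel symbols that enter, from $\Gamma^k_{ij}=\frac{1}{2g_{kk}}(g_{ik,j}+g_{jk,i}-g_{ij,k})$ (valid for a diagonal metric, no summation on $k$), together with the fact that the mixed components $g_{0i}$ ($i>0$) vanish identically, so all their partial derivatives vanish too. This gives at once, for $i,j>0$,
\[
 \Gamma^0_{ij}=-\tfrac{1}{2g_{00}}\,\delta_{ij}\,g_{ii,0},\qquad
 \Gamma^j_{i0}=\tfrac12\,\delta^j_i\,\tfrac{g_{ii,0}}{g_{ii}},\qquad
 \Gamma^0_{i0}=\tfrac{1}{2g_{00}}\,g_{00,i}.
\]

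For the second fundamental form I would start from $h_{ij}=h_{sc}(\partial_i,\partial_j)=\epsilon_N\,g(\nabla_{\partial_i}\partial_j,N)$, recalled above in this subsection. Because $N$ is proportional to $\partial_0$ and the metric is diagonal, only the $\partial_0$-component of $\nabla_{\partial_i}\partial_j$ survives in the inner product, producing $\epsilon_N\,\Gamma^0_{ij}\,g_{00}/\sqrt{|g_{00}|}$; rewriting $g_{00}/\sqrt{|g_{00}|}=\epsilon_N\sqrt{|g_{00}|}$ and substituting $\Gamma^0_{ij}$ yields both displayed expressions for $h_{ij}$. For the Weingarten operator I would compute $A_N(\partial_i)=-\nabla_{\partial_i}N$ directly by the Leibniz rule applied to the scalar factor $|g_{00}|^{-1/2}$; its $\partial_0$-component is $-\partial_i(|g_{00}|^{-1/2})-|g_{00}|^{-1/2}\Gamma^0_{i0}$, which I expect to cancel identically. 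This cancellation is forced anyway, since $g(N,N)=\epsilon_N$ is constant implies $\nabla_{\partial_i}N\perp N$, so $A_N$ maps $T\calf$ to $T\calf$; what remains is the tangential part $-|g_{00}|^{-1/2}\,\Gamma^j_{i0}\,\partial_j$, from which I read off $A^j_i=-\Gamma^j_{i0}/\sqrt{|g_{00}|}$ and, substituting $\Gamma^j_{i0}$, the stated diagonal formula. Finally $\tau_1=\tr A_N$ and $\tau_2=\tr A_N^2$ are immediate because $A_N$ is diagonal in this frame, whence $\tau_2=\sum_{i>0}(A^i_i)^2$.

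The computation is entirely routine, so there is no genuine obstacle; the only points requiring care are the bookkeeping of $g_{00}$ versus $|g_{00}|$ and the factors of $\epsilon_N$ (repeatedly using $\epsilon_N^2=1$ and $g_{00}=\epsilon_N|g_{00}|$), and the verification that the normal component of $-\nabla_{\partial_i}N$ cancels, which I would display as a one-line check rather than invoke implicitly.
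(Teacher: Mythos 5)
Your computation is correct and is exactly the direct coordinate calculation that the paper defers to (its proof is a one-line reference to Lemma~2.2 of \cite{rw-m} in the Riemannian case); the Christoffel symbols, the cancellation of the normal component of $\nabla_{\partial_i}N$, and the $\epsilon_N$ bookkeeping all check out, and your derivation in fact shows the intermediate expression should read $\Gamma^0_{ij}\sqrt{|g_{00}|}$ rather than the paper's $\Gamma^0_{ij}\sqrt{g_{00}}$, a harmless typo in the statement. Nothing further is needed.
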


\proof This is similar to the proof of Lemma~2.2 in \cite{rw-m} for Riemannian case.
\qed

\smallskip

Let $(x_0=t,x_1,\ldots x_n)$ be biregular orthogonal foliated coordinates on $M^{n+1}=\RR\times\RR^n$ with the foliation $\{x_0=c\}$,
components $g_{00}, g_{11},\ldots g_{nn}$, and $N = |g_{00}|^{-1/2}\,\partial_{t}$.
 Let $g\in{\rm Riem}(M,\,{T\calf},\,{\cal D})$ be a critical point of the action \eqref{E-Jmix-N}
with respect to adapted variations supported in $\Omega\subset M$.
 Then $\hat C\le0$, see Lemma~\ref{L-trace-tau1}, and $\tau_{1}$ is a bounded function: $\tau_{1}^2\le|\hat C|$, see \eqref{E-tau-N}.
Let $g_{00}\ne0$ be an arbitrary smooth function on $M$.
Using Lemma~\ref{lem-biregG}, we obtain
\begin{equation}\label{E-Qi}
  (\nabla_N\,h_{sc})_{ii} = -\frac{\epsilon_N}{2\,|g_{00}|}\,\big( g_{ii,00}
 - \frac12\,g_{ii,0}(\log|g_{00}|)_{,\,0} - (g_{ii,0})^2/g_{ii} \big).
\end{equation}
By~\eqref{E-Qi}, Euler-Lagrange equation \eqref{E-RicNs1F-2}$_{2}$ becomes the system of $n$ independent equations:
\begin{equation}\label{giitauconst}
 g_{ii,\,00} -\frac{1}{g_{ii}}\,(g_{ii,\,0})^{2}
 - g_{ii,\,0}\big(\,\frac{1}{2}\,(\log|g_{00}|)_{,\,0}
 +\tau_{1}\sqrt{|g_{00}|}\,\big)
 +\frac2{n}\,\hat C\,|g_{00}|\,g_{ii}
 = 0
\end{equation}
for $i=1,\ldots n$.
We seek solutions of \eqref{giitauconst} in the following form:
\begin{equation}\label{E-g1122}
 g_{ii}=\epsilon_i\,f_i(x_1,\ldots x_n)\,e^{\,-2\int \sqrt{|g_{00}|}\,y_i(t)\,{\rm d}\,t},\quad
\end{equation}
where $f_i\ (i=1,\ldots n)$ are arbitrary positive functions. From Lemma~\ref{lem-biregG} it follows that
for the metric given by \eqref{E-g1122}
the Weingarten operator has diagonal view and the functions $y_{1},\ldots,y_{n}$ are its eigenvalues, i.e., principal curvatures.
Hence, they must satisfy
\begin{equation}\label{E-y1y2tau}
 y_1(t)+\ldots + y_n(t) = \tau_{1},\quad y_{1}^{2} +\ldots + y_{n}^{2} = \tau_{2}.
\end{equation}
The metric given by \eqref{E-g1122} is critical with respect to adapted variations if and only if
\eqref{E-RicNs1F-2}$_1$ holds
and all $y_i(t)$  solve the first-order linear ODE
\begin{equation}\label{odetauconst}
  y'(t) -\tau_{1}\sqrt{|g_{00}|}\,y(t) -\frac{1}{n}\,\hat C\,\sqrt{|g_{00}|} = 0,
\end{equation}
where $\tau_1$ is given by \eqref{E-tau-N} with $\tau_1^0=f_0(x_1,\ldots,x_n)$.
Note that
\eqref{odetauconst} corresponds to \eqref{E-RicNs1F-2}$_2$.

For any $n>2$, the only critical metrics of the form \eqref{E-g1122} and $\tau_1=0$
are ones with constant principal curvatures $y_{i}$, $i \in \{1, \ldots, n\}$, see \eqref{odetauconst}
and case 2 of Example~\ref{Ex-tau1} in what follows.
One may use arbitrary $y_{i}$ satisfying equations \eqref{E-y1y2tau}:
$y_{1} + \ldots + y_{n} =0$ and
$y_{1}^{2} + \ldots + y_{n}^{2} = -\Ric_{N}(\Omega,g)$.
It follows that $\Ric_{N}(\Omega,g) \le0$, and if $\Ric_{N}(\Omega,g) =0$, the only solution is
$y_i=0$ -- a~totally geodesic foliation.
Again (for any $n>2$ and $\tau_1=0$) we can consider metrics with constant $\Div\nabla_NN$,
see Example~\ref{Ex-tau1}, case 2.
For a function of the view $|g_{00}|=P(t)$, we have $\Ric_{N}=\const\le0$, and such foliation is isoparametric.


The next example analyzes the set of solutions to \eqref{E-RicNs0F}--\eqref{E-RicNs1F} for $n=2$.

\begin{example}\label{Ex-tau1}\rm
For $n=2$, from \eqref{E-y1y2tau}$_1$ and \eqref{E-RicNs1F-2}$_1$ we obtain a quadratic equation for $y$, from which it follows that
\begin{equation}\label{y1quad}
 y_{1,2} = \frac{1}{2}\,\tau_{1} \pm \frac{1}{2}\,\big(\tau_{1}^{2} - 4\,|\hat C| - 2\Ric_{N}(\Omega,g)\big)^{1/2}\,.
\end{equation}
Substituting \eqref{y1quad} into \eqref{odetauconst} yields two equations relating $\tau_{1}$ with $g_{00}$,
\begin{equation}\label{sqrtg00}
 \sqrt{ |g_{00}| } = \mp\frac{\dt\tau_{1} \cdot (\tau_{1} \pm \sqrt{\tau_{1}^{2} + G})}
 {(\,|\hat C| -\tau_{1}^{2} \mp \tau_{1} \sqrt{\tau_{1}^{2} + G})\,\sqrt{\tau_{1}^{2}+G}}\,,
\end{equation}
where $G = -4\,|\hat C| - 2\,\Ric_{N}(\Omega,g)$,
and we already know that $\tau_{1}$ satisfies \eqref{E-tau-N}.
For $\dt\tau_{1} \ne 0$ we obtain 2 different values for $|g_{00}|$ -- a contradiction.
For $\dt\tau_{1}\equiv0$, \eqref{sqrtg00} yields a contradiction: $g_{00} =0$,
unless $\tau_{1}^{2} + G =0$ or $- \tau_{1}^{2} \mp \tau_{1} \sqrt{\tau_{1}^{2} + G} + | \hat C | =0$.
We shall see that this is exactly what happens for $n=2$, when we treat cases of $\tau_{1}=\const$ separately,
due to Lemma~\ref{L-trace-tau1}.

\smallskip
1. Let $\tau_1=\pm\,{|\hat C|}^{\frac12}$ with $\hat C<0$.
Then from \eqref{E-RicNs1F-2}$_{1}$ we obtain $\tau_2 = -|\hat C|- \Ric_{N}(\Omega,g)\ge0$.
The~principal curvatures of the leaves obey $y_1y_2=|\hat C|+\frac12\,\Ric_{N}(\Omega,g)$ and are constant:
\begin{equation}\label{E-k12}
 y_{1,2}=\frac12\,|\hat C|^{\frac12}\pm\frac12\,\big(-2\,\Ric_{N}(\Omega,g)-3|\hat C|\big)^{\frac12}.
\end{equation}
Moreover, $\Ric_{N}(\Omega,g)\le-\frac32\,|\hat C|$ holds.
For $g_{11}$ and $g_{22}$ represented by \eqref{E-g1122}, we~get
\begin{equation}\label{E-g1122-b}
 g_{11}=\epsilon_1 f_1(x_1,x_2)\,e^{\,-2\,y_1\int\sqrt{|g_{00}|}\,{\rm d}\,t},\quad
 g_{22}=\epsilon_2 f_2(x_1,x_2)\,e^{\,-2\,y_2\int\sqrt{|g_{00}|}\,{\rm d}\,t}.
\end{equation}
The metric \eqref{E-g1122-b} is critical for $J_{\rm mix}$ with respect to adapted variations
if and only if \eqref{E-RicNs1F-2}$_{2}$ holds, i.e., $y_{1}$ and $y_{2}$ are both solutions of \eqref{odetauconst}.
The only constant solution of \eqref{odetauconst} is $y=\frac1n\,|\hat C|^{1/2}$.
Comparing this result to \eqref{E-k12}, we see that there exists a metric of the form \eqref{E-g1122-b}
critical with respect to adapted variations if and only if
\begin{equation}\label{E-J-eq32}
  \Ric_{N}(\Omega,g) = -\frac32\,|\hat C|.
\end{equation}
 In this case, we have $y_{1,2}=\frac12\,|\hat C|^{\frac12}$, and from \eqref{E-g1122-b} obtain
\begin{equation*}
 g_{11}=\epsilon_1 f_1(x_1,x_2)\,e^{\,-|\hat C|^{1/2} \int\sqrt{|g_{00}|}\,{\rm d}\,t},\quad
 g_{22}=\epsilon_2 f_2(x_1,x_2)\,e^{\,-|\hat C|^{1/2} \int\sqrt{|g_{00}|}\,{\rm d}\,t}.
\end{equation*}
In our case, we cannot assume $|g_{00}|\equiv1$ (a~Riemannian foliation), since this yields $\tilde H=0$;
hence, a contradiction: $\hat C=0$.  Note also that from \eqref{E-J-eq32} it follows that
\[
 \tau_{1}^{2} + G = |\hat C| - 4\,|\hat C| -2\,\Ric_{N}(\Omega,g) = 0,
\]
thus, we cannot use~\eqref{sqrtg00}.
Using Lemma~\ref{lem-biregG}, we obtain:
\begin{equation} \label{divNN}
 \Div\tilde H = \sum\nolimits_{\,i>0}\big(\epsilon_i g_{ii} Q_{i,i} +\frac12\big(\epsilon_N g_{00,i}
 +\sum\nolimits_{\,j>0} \epsilon_j\,g_{jj,i} \big)\,Q_i\big),\ {\rm where}\ Q_i=-\frac1{2|g_{00}|}\,\frac{g_{00,i}}{g_{ii}}.
\end{equation}
Next, we will find condition for $\Div\tilde H$ to be
constant:
\begin{equation}\label{E-Z0}
 \Div\tilde H=Z_0=\const.
\end{equation}
Note that in our case, $Z_0\ne0$,
see Lemma~\ref{L-trace-tau1}.
Then by \eqref{eq-ran1}, we will get $\Ric_{N}=\const$; thus,
$g$ will be critical for any domain~$\Omega\subset M$.
To solve \eqref{E-Z0}, assume for simplicity that
\[
 f_a=1,\quad \epsilon_a=1\ \ (a=1,2),\quad
 \epsilon_N=1,\quad
 g_{00}=w(x_1,x_2) T(t)
\]
for some functions $w>0$ and $T>0$.
Then $g_{11}=g_{22}$ are functions of $t$ and $w$.
Hence, equation \eqref{E-Z0} yields an elliptic PDE (with parameter $t$) for $w$:
\begin{eqnarray*}
 && \Delta w +f(t,w)\,\<\nabla w,\nabla w\> = Z_0,\quad {\rm where}\\
 && f(t,w)= \frac12\,T (t)\, {e}^{\sqrt{|\hat C|\,w}\int\!\sqrt{T(t)}\,{\rm d}t}
 +\frac{\sqrt{C}\int\!\sqrt{T(t)}\,{\rm d}t} {2\,{w^{1/2}}}\Big(1+\frac{1}{2\,w\,T(t)}\Big)
 -\frac1{w}\,.
\end{eqnarray*}
The substitution $u=\int\frac{{\rm d}w}{F(w)}$ with $F(w)=e^{\,\int f(w)\,{\rm d}w}$ leads to the
Poisson's equation
 $\Delta u = Z_0$.
\newline
Hence, $u=u_0(x_1,x_2)+\frac12 Z_0(x_1^2+x_2^2)$, where $u_0$ is a harmonic function.

\smallskip

2. For $\tau_{1}=0$ (and $\hat C=0$, a harmonic foliation), the system \eqref{E-RicNs1F-2} reads:
\begin{equation}\label{nablahsc0}
 \tau_{2} = -\Ric_{N}(\Omega,g),\qquad \nabla_{N}\,h_{sc} =0
\end{equation}
with $\Ric_{N}(\Omega,g)\le0$.
In our case, the system \eqref{giitauconst} has the following view:
\begin{equation*}
 g_{ii,\,00} -\frac{1}{g_{ii}}\,(g_{ii,\,0})^{2} - \frac{1}{2}\,g_{ii,\,0}(\log|g_{00}|)_{,\,0} = 0,\quad i=1,2,
\end{equation*}
and $y'=0$, see \eqref{odetauconst}.
Thus, \eqref{E-g1122} are valid, where $y_1=-y_2$ are constant.
In view of \eqref{nablahsc0}$_1$ and assumption $\tau_{1}=0$, the~principal curvatures of the leaves~are
\[
 y_{1,2}=\pm\,(-\Ric_{N}(\Omega,g)/\,2)^{1/2}.
\]
Observe that we cannot use equation \eqref{sqrtg00}, because $|\hat C|-\tau_{1}^{2}\mp\tau_{1}\sqrt{\tau_{1}^{2}+G}=0$.

We can consider metrics with constant $\Div\nabla_NN$, see \eqref{E-Z0}, which will be critical for variations with respect to any $\Omega$.
For such metrics it follows from the assumption $\tau_{1}=0$ and Lemma~\ref{L-trace-tau1} that $Z_0=0$.
For a function of the view $|g_{00}|=P(t)$, by \eqref{divNN} we have $\Div\tilde H=0$ (since $Q_i=0$),
hence, we obtain a Riemannian harmonic foliation with $\Ric_{N}=\const\le0$.
Such foliation is given by an \textit{isoparametric function}~$x_{0}$.
\end{example}

\begin{defn}[see Chap.~8 in \cite{to}]\rm
A smooth function $f : M \rightarrow \mathbb{R}$ without critical points on a pseudo-Riemannian manifold $(M,g)$ is called \textit{isoparametric} if for any vector $X$ tangent to a level hypersurface of $f$ the following conditions are satisfied:
\begin{equation}\label{E-isopar2}
 X (g( \nabla f, \nabla f )) = 0,\qquad X(\Delta f) = 0.
\end{equation}
\end{defn}

\begin{prop}[see \cite{to}]
Let $\mathcal{F}$ be a foliation of a pseudo-Riemannian manifold $(M,g)$ by the level hypersurfaces of a function $f$
without critical points on $M$. Then the following conditions are equivalent:


 $(i)$ $\mathcal{F}$ is a Riemannian foliation and every its leaf has constant mean curvature;

 $(ii)$ $f$ is an isoparametric function.

\noindent
For Riemannian foliations of space forms, $(ii)$ is equivalent to the constancy of all principal curvatures on each level hypersurface of $f$.
\end{prop}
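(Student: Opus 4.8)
The plan is to rewrite each half of $(i)$ as a condition on the two scalar functions $\phi := \sqrt{|g(\nabla f,\nabla f)|}$ and $\tau_1$, and then to recognize $(ii)$ directly. Write $N = (\nabla f)/\phi$ for the unit normal of $\calf$, so that $\nabla f = \phi\,N$ and $\epsilon_N = g(N,N)$ is the sign of $g(\nabla f,\nabla f)$; here $\phi>0$ everywhere since $f$ has no critical points. Because the normal distribution ${\cal D}=\mathrm{span}(N)$ is one-dimensional, $\tilde h=0$ is the same as $\nabla_N N=0$, so $\calf$ is Riemannian precisely when the normal curves are geodesics; likewise each leaf has constant mean curvature precisely when $X(\tau_1)=0$ for all $X\in T\calf$.

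First I would prove the identity $\nabla_N N=(\epsilon_N/\phi)\,\widetilde\nabla\phi$, with $\widetilde\nabla\phi$ the leaf-tangential gradient. As $g(N,N)$ is constant, $g(\nabla_N N,N)=0$, so $\nabla_N N\in T\calf$; and for $Y\in T\calf$ the symmetry of the Hessian of $f$ gives $\phi\,g(\nabla_N N,Y)=g(\nabla_N\nabla f,Y)=g(\nabla_Y\nabla f,N)=\epsilon_N\,Y(\phi)$, where I used $g(\nabla_Y N,N)=0$. Hence $\nabla_N N=0$ iff $\phi$ is constant along the leaves, which, since $g(\nabla f,\nabla f)=\epsilon_N\phi^2$, is exactly the first isoparametric condition in \eqref{E-isopar2}. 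This settles ``$\calf$ Riemannian $\Leftrightarrow$ \eqref{E-isopar2}$_1$''.

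Next I would use the clean formula $\Delta f=\Div(\phi N)=N(\phi)+\phi\,\Div N=N(\phi)-\phi\,\tau_1$, where $\Div N=-\tau_1$. Suppose now $\calf$ is Riemannian, so $\phi$ is constant on the leaves; as $f$ has no critical points this means $\phi=\Phi(f)$ for a one-variable function $\Phi$, whence $N(\phi)=\Phi'(f)\,N(f)=\epsilon_N\phi\,\Phi'(f)$ is again a function of $f$, hence constant on the leaves. Differentiating $\Delta f=N(\phi)-\phi\,\tau_1$ along $X\in T\calf$ then gives $X(\Delta f)=-\phi\,X(\tau_1)$, and since $\phi\ne0$ the second isoparametric condition $X(\Delta f)=0$ is equivalent, under the Riemannian hypothesis, to constancy of $\tau_1$ on the leaves. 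Combining the two steps yields $(i)\Leftrightarrow(ii)$: if $(i)$ holds then both conditions in \eqref{E-isopar2} follow, and conversely \eqref{E-isopar2}$_1$ forces $\calf$ to be Riemannian, after which \eqref{E-isopar2}$_2$ forces constant mean curvature.

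Finally, for the space-form addendum I would invoke the Riccati equation governing the shape operator $A_N$ along the normal geodesics (which exist since $\nabla_N N=0$ once $\calf$ is Riemannian), together with the fact that in constant curvature $R_N$ acts as a constant multiple of the identity on each leaf. Then the eigenvalues of $A_N$ satisfy decoupled scalar Riccati equations along each normal geodesic, and constancy of $\phi$ and $\tau_1$ on the leaves propagates to all the higher mean curvatures $\tau_k=\tr A_N^{\,k}$, hence to all principal curvatures; this is the classical Cartan argument, for which I would cite \cite{to}. I expect this last step to be the main obstacle, precisely because it is where the constant-curvature hypothesis is indispensable: for a general ambient $(M,g)$ the isoparametric condition controls only $\phi$ and $\tau_1$ and need not pin down the individual principal curvatures.
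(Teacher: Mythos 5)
Your argument is correct and is essentially the paper's own verification, which works in biregular foliated coordinates with $f=x_0$: there $\phi=|g(\nabla f,\nabla f)|^{1/2}=1/\sqrt{|g_{00}|}$, condition \eqref{E-isopar2}$_1$ is identified with leafwise constancy of $|g_{00}|$ (i.e.\ geodesic normals, the Riemannian condition), and the computation $X(\Delta f)=-\epsilon_N|g_{00}|^{-1/2}X(\tau_1)$ is exactly your $X(\Delta f)=-\phi\,X(\tau_1)$ under that hypothesis. Your coordinate-free phrasing and your deferral of the space-form addendum to Cartan's argument in \cite{to} match the paper's treatment, which likewise only cites \cite{to} for that part.
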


In biregular foliated coordinates, consider a function $f= x_{0}$. Then we have $g( \nabla f , \partial_{i} ) =0$ for $i>0$ and $g( \nabla f, \partial_{0}) = 1$; hence,
\[
 \nabla f = \epsilon_{N}\,g(\nabla f, N)N = \epsilon_{N} \frac{1}{| g_{00} |}\,\partial_{0} = \epsilon_{N}\frac{1}{\sqrt{| g_{00} |}}\,N,\qquad
 g(\nabla f, \nabla f) = \frac{g_{00}}{|g_{00}|^{2}} = \frac{\epsilon_{N}}{|g_{00}|}.
\]
Remark that a foliation by level hypersurfaces of $f$ is \textit{Riemannian} if and only if $X (| g_{00} |) =0$ for all $X$ tangent to the leaves, see \cite{to}, which is equivalent to \eqref{E-isopar2}$_{1}$. On the other hand, for such foliations, using
the definition $\Delta f = \epsilon_N g(\nabla_N\nabla f, N)+\sum_a \epsilon_a g(\nabla_a\nabla f, E_a)$, we have
\begin{eqnarray*}
 X(\Delta f) \eq X(\sum\nolimits_{\,a} \epsilon_{a}\,g(\nabla_{a}\,\epsilon_{N}\frac{1}{\sqrt{| g_{00} |}}\,N, E_{a}))
  + \epsilon_{N} X(g(\nabla_{N}\,\epsilon_{N}\frac{1}{\sqrt{| g_{00} |}}\,N, N))  \\
 \eq -\frac{\epsilon_{N}}{\sqrt{| g_{00} |}}\,X(\tau_{1}) + X\Big( N\big(\frac{1}{\sqrt{| g_{00} |}} \big) \frac{1}{|g_{00}|} \Big)  = -\frac{\epsilon_{N}}{\sqrt{| g_{00} |}}\,X(\tau_{1});
\end{eqnarray*}
hence, the mean curvature is constant along the leaves if and only if \eqref{E-isopar2}$_{2}$ holds.


\subsection{Conformal submersions}

Conformal submersions form an important class of mappings, which were investigated also in relation with Einstein equations,
see survey in \cite{Falcitelli}.

\begin{defn}\rm
Let $(M^{n+p}, g)$, $(\hat{M}^p, \hat{g} )$ be smooth pseudo-Riemannian manifolds. A differentiable mapping $\pi : (M, g) \rightarrow (\hat{M}, \hat{g} )$ is called a \emph{conformal} (or: \emph{horizontally conformal}) \emph{submersion} if
\begin{enumerate}
\item $\pi$ is a submersion, i.e., it is surjective and has maximal rank,
\item ${\rm d}\pi$ restricted to the distribution orthogonal to the fibers of $\pi$ is a conformal mapping.
\end{enumerate}
\end{defn}

Note that for $p=1$ any submersion is conformal.
Using our notation, we can define a conformal submersion as a mapping $\pi : (M, g) \rightarrow (\hat{M}, \hat{g} )$ for which $\cal \widetilde{D}$ is the maximal distribution such that ${\rm d}\pi ( {\cal \widetilde{D}} ) = 0$ and
\begin{equation}\label{dilation}
 \pi^{*}(\hat{g}) = e^{-2f}\,g^{\perp}
\end{equation}
for some $f \in C^{\infty}(M)$ (such $f$ is called the \emph{dilation} of the submersion). Then $\cal \widetilde{D}$ is tangent to the fibers of the submersion (and hence, integrable).
Denote $\nabla^\top f = (\nabla f)^\top$ and $f$ is as in \eqref{dilation}.
One can also show \cite{Gudmundsson} that $\cal D$ is totally umbilical with the second fundamental form satisfying:
\begin{equation}\label{hconfsub}
 \tilde h = - (\nabla^\top f)\, g^{\perp}\,.
\end{equation}

Among conformal submersions, those with totally umbilical fibers are example of particularly interesting geometry.
While the adapted variations \eqref{E-Sdtg}--\,\eqref{E-Sdtg-2} preserve the orthogonality of two distributions,
we can consider their particular class which preserves the structure of conformal submersion with totally umbilical fibers.

\begin{defn}\rm
We say that a variation $g_{t}$ is \emph{$\cal D$-conformal} if $\dt g^{\perp}_t = s g^{\perp}_0$ for some $s \in C^{\infty }(M)$, we define \emph{$\cal\widetilde{D}$-conformal} variations analogously and say that variation is \emph{biconformal} if it is both $\cal D$-conformal and $\cal \widetilde{D}$-conformal.
\end{defn}

A~tensor ${B}\in{\mathfrak M}_{\cal D}$ is ${\cal D}$-\emph{conformal}
if ${B}=s\,g^\perp$ for some $s \in C^\infty (M, \RR )$.
Given $g \in {\rm Riem}(M,\widetilde{\cal D},\, {\cal D})$, the subspace
of ${\mathfrak M}$, consisting of \textit{biconformal} adapted tensors, splits into the direct sum
of ${\cal D}$- and $\widetilde{\cal D}$-conformal components.

\begin{prop}
Let $\pi : (M,g) \rightarrow ( {\hat M} , {\hat g} )$ be a conformal submersion with totally umbilical fibers, and let $g_{t}$ be an adapted variation of $g$. Then all mappings $\pi : (M,g_{t}) \rightarrow ({\hat M},  {\hat g})$ are conformal submersions with totally umbilical fibers if and only if variation $g_{t}$ is ${\cal D}$-conformal~and
\begin{equation}\label{umbpreserving}
 \nabla \big( B - \frac{1}{n}\,(\,\tr B^{\sharp} )\,\tilde g\,\big) =0 .
\end{equation}
\end{prop}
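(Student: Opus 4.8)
The plan is to treat the two defining features of a conformal submersion with totally umbilical fibers separately: horizontal conformality of $\pi$, which involves only $g^\perp$, and total umbilicity of the fibers, which are the integral manifolds of the fixed integrable distribution $\widetilde{\cal D}$ and so involve only the vertical data $\tilde g$ and $h$. I expect the first feature to persist along $g_t$ exactly when the variation is $\cal D$-conformal, and the second exactly when \eqref{umbpreserving} holds; the proposition is then the conjunction of these two equivalences. Throughout I write $B=\tilde B+B^{\perp}$ according to \eqref{e:Decomposition2}.

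For the horizontal part, note that $(\hat M,\hat g)$, and hence $\pi^{*}\hat g=e^{-2f}g^{\perp}$ (see \eqref{dilation}), do not depend on $t$. Thus $\pi:(M,g_t)\to(\hat M,\hat g)$ is a conformal submersion if and only if $g^{\perp}_t$ is a positive pointwise multiple of $\pi^{*}\hat g$, i.e. $g^{\perp}_t$ stays in the fixed conformal class of $g^{\perp}_0$. For a smooth family this is equivalent to $\dt g^{\perp}_t=s\,g^{\perp}_0$ for some $s\in C^\infty(M)$, which is precisely the requirement that $g_t$ be a $\cal D$-conformal variation; the dilation is then recovered by $g^{\perp}_t=e^{-2f_t}\pi^{*}\hat g$, and ${\cal D}$ stays totally umbilical with $\tilde h_t=-(\nabla^{\top}f_t)\,g^{\perp}_t$ automatically, see \eqref{hconfsub}.

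For the vertical part I would compute how the fibers' second fundamental form moves. Since $\widetilde{\cal D}$ is integrable, $h_t(X,Y)=(\nabla^{t}_X Y)^{\perp}$ for $X,Y\in\mathfrak X_{\widetilde{\cal D}}$, so its scalar part in a horizontal direction $Z\in\mathfrak X_{\cal D}$ is $h_{t,Z}(X,Y)=g_t(h_t(X,Y),Z)$. Differentiating with \eqref{eq2G}, the terms carrying $B^{\perp}$ cancel, and at an umbilical configuration (where $A_Z=\tfrac1n g(H,Z)\,\widetilde{\id}$) one is left with $2\,\dt h_{t,Z}(X,Y)=\tfrac2n g(H,Z)\,\tilde B(X,Y)-(\nabla_Z\tilde B)(X,Y)$. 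Feeding this into $\dt\big(h_{t,Z}-\tfrac1n H_{t,Z}\,\tilde g_t\big)$, together with $\dt\tilde g_t=\tilde B$ and the induced $\dt H_{t,Z}=-\tfrac12\tr_{\tilde g}(\nabla_Z\tilde B)$, all terms proportional to $g(H,Z)$ cancel, leaving the trace-free part of $-\tfrac12\,\nabla_Z\tilde B$ on $\widetilde{\cal D}$. Using the pointwise identities $\nabla_Z\tilde g=\nabla_Z g^{\perp}=0$ on $\widetilde{\cal D}\times\widetilde{\cal D}$ and $\tr_{\tilde g}(\nabla_Z\tilde B)=Z(\tr_{\tilde g}\tilde B)$, this equals the trace-free part of the restriction of $\nabla_Z C$ to $\widetilde{\cal D}$, where $C:=B-\tfrac1n(\tr B^{\sharp})\,\tilde g$. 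Hence infinitesimal preservation of umbilicity amounts to the vanishing of the $\cal D$-directional trace-free part of $\nabla C$, which I would then upgrade to the full parallel condition \eqref{umbpreserving}.

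The main obstacle is exactly this upgrade, i.e. the necessity direction together with the passage from a single-instant computation to the statement for all $t$. The calculation above only forces the trace-free, $\cal D$-directional slot of $\nabla C$ to vanish, whereas \eqref{umbpreserving} asks for $\nabla C=0$ in every slot. To close the gap one must use that the whole rigid structure is preserved for every $t$. Concretely, under $\cal D$-conformality one has $C|_{\cal D}=s\,g^{\perp}$ and $C(X,V)=0$ for $X\in\widetilde{\cal D},\,V\in\cal D$; examining $\nabla C$ on the horizontal and mixed slots then forces the conformal factor $s$ to be constant and couples $\tilde B$ to the configuration tensors $A_Z$ (through $(\nabla_Z V)^{\top}$ and $(\nabla_Z X)^{\perp}$). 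Verifying that preservation of the conformal-submersion-with-umbilical-fibers structure along the entire family produces precisely these constraints, and no more, is the delicate point; by comparison, the trace bookkeeping and the cancellation of all mean-curvature contributions in the previous paragraph are routine.
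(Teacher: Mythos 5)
Your route coincides with the paper's: the horizontal half (all $\pi:(M,g_t)\to(\hat M,\hat g)$ are conformal submersions iff $\dt g_t^{\perp}=s\,g_0^{\perp}$) is obtained exactly as in the paper by differentiating $e^{-2f_t}g_t^{\perp}=\pi^{*}\hat g$ from \eqref{dilation}, and the vertical half rests on the evolution equation for the fibers' second fundamental form, i.e.\ the dual of \eqref{eq-hatbH-1} together with \eqref{eq-hatH}, plus the observation that the $B^{\perp}$-part of an adapted variation preserves umbilicity automatically.

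The ``upgrade'' you flag as the main obstacle is not a step you need to supply, because the paper does not perform it either. Its proof derives precisely your identity, $\nabla B(X,Y)=\frac1n\,g(X,Y)\,\nabla(\tr B^{\sharp})$ for $X,Y\in\widetilde{\cal D}$, where $\nabla B(X,Y)$ denotes the ${\cal D}$-valued object with $g(\nabla B(X,Y),Z)=(\nabla_Z B)(X,Y)$ that appears in \eqref{eq-hatbH-1}, and then records this identity as \eqref{umbpreserving}. Read literally as parallelism of $C=B-\frac1n(\tr B^{\sharp})\tilde g$ in every slot, \eqref{umbpreserving} is strictly stronger than what preservation of umbilicity forces: for instance $(\nabla_Z C)(X,V)$ with $X\in\widetilde{\cal D},\ V\in{\cal D}$ equals $-C^{\perp}((\nabla_Z X)^{\perp},V)-\tilde C(X,(\nabla_Z V)^{\top})$ and has no reason to vanish. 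So the condition is to be read on vertical arguments and ${\cal D}$-directional derivatives only --- exactly what your computation yields --- and that restricted identity is also all that the sufficiency direction uses: the trace-free part of $h_t$ then satisfies a homogeneous linear first-order ODE along the family, and uniqueness with zero initial data gives $h_t=\frac1n H_t\,\tilde g_t$ for all $t$, as you anticipate. The only caveat worth adding is that the identity must hold along the whole family (with $\nabla^{t}$, $B_t$ and $\tr_{g_t}$), not merely at $t=0$; with that reading your argument is complete and is the paper's argument.
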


\begin{proof}
If all the mappings $\pi : (M,g_{t}) \rightarrow ({\hat M}, {\hat g})$ are conformal submersions, we have
\[
  e^{\,-2f_t}\,g^{\perp}_t = \pi^{*}(\hat{g})
\]
for some $f_t \in C^{\infty}(M)$. Differentiating the above we obtain
\[
 e^{-2f_t}\dt g^{\perp}_t - 2\,\dt f_{t}\,e^{-2f_{t}} g_{t}^{\perp} =0.
\]
Hence, $\dt g^{\perp}_t = s g^{\perp}_0$ for $s = 2\,\dt f_{t}$.

If $\cal\widetilde D$ is totally umbilical for all $g_t$ then $h = \frac{1}{n} H \tilde g_t$,
and from \eqref{eq-hatbH-1} we obtain
\[
 \frac{2}{n} ( B(X,Y)\, H + g(X,Y)\, \dt H ) = \frac{2}{n}\, B(X,Y)\,H - \nabla B(X,Y)
\]
for all $X,Y \in \cal \widetilde{D}$.
Using \eqref{eq-hatH}$_{1}$ yields
\[
 \frac{1}{n}\,g(X,Y)\,\nabla(\tr B^{\sharp}) = \nabla B(X,Y).
\]
On the other hand, if \eqref{umbpreserving} is satisfied and the variation is $\cal D$-conformal, then from the uniqueness of the solution of ODE it follows that $h = \frac{1}{n} H \tilde g_t$ and $e^{-2f_t} g^{\perp}_t = \pi^{*} \hat{g}$ for all $g_t$;
hence, all $\pi : (M,g_{t}) \rightarrow ({\hat M},  {\hat g})$ are conformal submersions with totally umbilical fibers.
\end{proof}

Note that the condition \eqref{umbpreserving} is satisfied, in particular, by biconformal variations.

We examine the metrics critical for the action \eqref{E-Jmix} with respect to ${\cal D}$-conformal variations.
The Euler-Lagrange equation for these metrics is a scalar equation. To find it, we can use our result \eqref{E-Jmix-dt-fin},
with $B=s\,g^{\perp}$; by demanding it to be satisfied for all $s \in C^{\infty}(M)$ we obtain
\begin{equation} \label{biconfEL1}
 (p-1)\Div\tilde{H} + \frac{p-2}{2}\,( \Sm_{\rm ex} + \< \tilde T, \tilde T \> )
 +\frac{p}{2}\,\big(\widetilde\Sm_{\rm ex} + \< T, T \> - \Sm^*_{\rm mix}(\Omega,g) \big) = 0,
\end{equation}
where $\Sm^*_{\rm mix} = \Sm_{\rm mix} - \frac{2}{p}\,\big( \Sm_{\rm ex} + 2\,\<\tilde T, \tilde T \> - \<T, T\> \big)$.

The mixed scalar curvature is an important tool in investigation of conformal submersions with totally umbilical fibers.
In~\cite{Zawadzki}, it was used to obtain some integral formulas and existence conditions for such mappings.
There, the following formula was considered:
\begin{equation} \label{SmixCSUF}
 \Sm_{\rm mix} = - p\,\widetilde{\Delta} f - p\,g( \nabla^\top f, \nabla^\top f )
 + \< \tilde T, \tilde T \> + \Div H + \frac{n-1}{n}\,g(H,H),
\end{equation}
which is just a special case of \eqref{eq-ran-ex}, expressed in terms of $f$ and $H$. We can present in a similar way the Euler-Lagrange equations for biconformal variations on the domains of conformal submersions with totally umbilical fibers.

\begin{prop}[\bf Euler-Lagrange equations]
Let $\pi : (M^{n+p},g) \rightarrow ({\hat M}^p, {\hat g})$, where $p>1$, be a conformal submersion with totally umbilical fibers, and $g$ be critical for the action \eqref{E-Jmix} with respect to biconformal variations. Then
\begin{eqnarray}\label{ELbiconfCSUF1}
\nonumber
 &&\hskip-10mm -\,2\,p\,(p-1)\,\widetilde{\Delta} f - p^{2}(p-1)\,g(\nabla^{\top} f, \nabla^{\top} f)
 +\frac{(p-2)(n-1)}{n}\,g(H,H) \\
 && +\,(p-2)\,\< \tilde T, \tilde T \>
 = {p}\,\Sm\,^*_{\rm mix}(\Omega,g) \quad ({\rm for}\ {\cal D}\mbox{\rm-conformal\ variations}), \\
\label{ELbiconfCSUF2}
\nonumber
 &&\hskip-10mm p\,(p-1)(n-2)\,g( \nabla^{\top} f, \nabla^{\top} f) +2\,(n-1)\,\Div H +(n-1)\,g( H , H) \\
 && +\,{n}\,\< \tilde T, \tilde T \>
 = n\,\widetilde{\Sm}\,^*_{\rm mix}(\Omega,g) \qquad ({\rm for}\ \widetilde{\cal D}\mbox{\rm-conformal\ variations}),
\end{eqnarray}
where
\begin{eqnarray}\label{E-star2}
\nonumber
 &&\hskip-5mm \Sm\,^*_{\rm mix} = - p\,\big(\widetilde{\Delta} f + g( \nabla^{\top} f, \nabla^{\top} f)\big)
 + \frac{p-4}{p}\,\<\tilde T, \tilde T \> + \frac{(n-1)(p-2)}{np}\,g(H,H) + \frac{p-2}{p}\,\Div H , \\
 &&\hskip-5mm \widetilde{\Sm}\,^*_{\rm mix} = -p\,\frac{n-2}{n}\,\big(\widetilde{\Delta} f
   +g( \nabla^{\top} f, \nabla^{\top} f )\big) + \frac{n+2}{n}\,\< \tilde T, \tilde T \> + \Div H + \frac{n-1}{n}\,g(H,H).
\end{eqnarray}
\end{prop}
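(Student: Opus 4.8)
The plan is to use that a biconformal variation has the form $B=s\,g^\perp+\tilde s\,\tilde g$ and that, as noted just above the proposition, the biconformal adapted tensors split into a direct sum of $\cal D$-conformal and $\widetilde{\cal D}$-conformal components. Consequently, criticality of $g$ with respect to biconformal variations is equivalent to simultaneous criticality with respect to $\cal D$-conformal variations ($B=s\,g^\perp$, all $s\in C^\infty(M)$) and with respect to $\widetilde{\cal D}$-conformal variations ($B=\tilde s\,\tilde g$, all $\tilde s$). I would therefore establish \eqref{ELbiconfCSUF1} and \eqref{ELbiconfCSUF2} independently, each being the scalar condition forced by requiring the relevant Euler--Lagrange integrand, paired with $s\,g^\perp$ (resp. $\tilde s\,\tilde g$), to vanish for all test functions.

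First I would assemble the geometric data dictated by the conformal-submersion structure. Because $\widetilde{\cal D}$ is tangent to the integrable fibers, $T=0$ and $\langle T,T\rangle=0$; because the fibers are totally umbilical, $h=\frac1nH\tilde g$, so $\Sm_{\,\rm ex}=\frac{n-1}{n}g(H,H)$ as in the totally umbilical Example. By \eqref{hconfsub}, $\cal D$ is totally umbilical with $\tilde h=-(\nabla^\top f)\,g^\perp$, hence $\tilde H=-p\,\nabla^\top f$ and $\widetilde{\Sm}_{\,\rm ex}=\frac{p-1}{p}g(\tilde H,\tilde H)=p(p-1)\,g(\nabla^\top f,\nabla^\top f)$. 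Equating the two expressions \eqref{eq-ran-ex} (with $T=0$, $\Sm_{\,\rm ex}=\frac{n-1}{n}g(H,H)$) and \eqref{SmixCSUF} for $\Sm_{\rm mix}$ then yields the bridge identity $\Div\tilde H=-p\,\widetilde\Delta f-p^2\,g(\nabla^\top f,\nabla^\top f)$, which lets me eliminate every occurrence of $\tilde H$ in favour of $f$-data.

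For the $\cal D$-conformal case I would substitute these values into the scalar equation \eqref{biconfEL1}, which is already extracted from \eqref{E-Jmix-dt-fin} by testing against $B=s\,g^\perp$. Collecting the $g(\nabla^\top f,\nabla^\top f)$-contributions (the $\widetilde{\Sm}_{\,\rm ex}$ term partially cancels the one from $\Div\tilde H$) and multiplying by $2$ produces \eqref{ELbiconfCSUF1}. For the $\widetilde{\cal D}$-conformal case I would record the dual of \eqref{biconfEL1}, obtained from the symmetric functional $J_{\rm mix}$ by interchanging $\cal D\leftrightarrow\widetilde{\cal D}$ (thus $p\leftrightarrow n$, $H\leftrightarrow\tilde H$, $T\leftrightarrow\tilde T$, $\Sm_{\,\rm ex}\leftrightarrow\widetilde{\Sm}_{\,\rm ex}$, $\Sm^*_{\rm mix}\leftrightarrow\widetilde{\Sm}\,^*_{\rm mix}$); substituting the same data and simplifying gives \eqref{ELbiconfCSUF2}. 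Finally I would verify \eqref{E-star2} by inserting the data into \eqref{E-S-star} and its dual (with $T=0$), matching the coefficients of $\widetilde\Delta f$, $g(\nabla^\top f,\nabla^\top f)$, $\langle\tilde T,\tilde T\rangle$, $g(H,H)$ and $\Div H$ term by term.

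The main difficulty is organizational rather than conceptual. One must keep the pointwise identities (for $\Sm_{\rm mix}$, $\widetilde{\Sm}_{\,\rm ex}$, $\Div\tilde H$) distinct from the mean-value constants $\Sm^*_{\rm mix}(\Omega,g)$ and $\widetilde{\Sm}\,^*_{\rm mix}(\Omega,g)$ carried through the volume-preserving equations, and one must track the $\Div H$ term with care: it enters $\Sm^*_{\rm mix}$ through \eqref{E-S-star} with coefficient $\frac{p-2}{p}$, which is precisely what yields the stated $\frac{p-2}{p}\,\Div H$ in \eqref{E-star2}. A further subtlety is that the submersion data is \emph{not} symmetric under $\cal D\leftrightarrow\widetilde{\cal D}$ (only $\widetilde{\cal D}$ is integrable), so the dualization must be applied to the abstract variational identity \eqref{biconfEL1} before the asymmetric geometric data is substituted.
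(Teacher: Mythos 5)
Your proposal is correct and follows essentially the same route as the paper: substitute the conformal-submersion data ($T=0$, $h=\frac1nH\tilde g$, $\tilde h=-(\nabla^\top f)\,g^\perp$, hence $\tilde H=-p\,\nabla^\top f$ and $\widetilde{\Sm}_{\,\rm ex}=p(p-1)\,g(\nabla^\top f,\nabla^\top f)$) into \eqref{biconfEL1} and its $\cal D\leftrightarrow\widetilde{\cal D}$ dual, and use \eqref{SmixCSUF} in \eqref{E-S-star} and its dual to obtain \eqref{E-star2}. Your bridge identity $\Div\tilde H=-p\,\widetilde\Delta f-p^2 g(\nabla^\top f,\nabla^\top f)$ and the splitting of biconformal variations are left implicit in the paper's terser proof, but the computation is the same and all coefficients check out.
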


\begin{proof}
For conformal submersions with totally umbilical fibers we have
\[
 T=0,\quad \Sm_{\rm ex}=\frac{n-1}{n}\,g(H,H),\quad
 \Sm_{\rm ex}=\frac{p-1}{p}\,g(\tilde H,\tilde H),
\]
and from \eqref{hconfsub} we obtain $\tilde H=-p \nabla^{\top} f$. Using this, we rewrite \eqref{biconfEL1} as~\eqref{ELbiconfCSUF1}.
For $\widetilde{\cal D}$-conformal variations of metrics on the domain of conformal submersion with umbilical fibers, a formula analogous to \eqref{biconfEL1} yields \eqref{ELbiconfCSUF2}. Using \eqref{SmixCSUF} in \eqref{E-S-star}, we get remaining formulas
\eqref{E-star2}.
\end{proof}

We examine the above equations in a particular case of totally geodesic fibers, i.e., $H=0$.

\begin{prop}
Let $\pi : (M^{n+p},g) \rightarrow ({\hat M}^p, {\hat g})$, where $p>1$ and $g_{\,|\,\tilde{\cal D}}>0$,
be a conformal submersion with totally geodesic fibers. If $g$ is critical
for the action \eqref{E-Jmix} with respect to biconformal variations then
\[
 e^{\,\lambda f},\quad {\rm where}\quad \lambda = \frac1{2\,n}\,(p\,n + (p-2)(n-2))>0,
\]
is a fiberwise harmonic function.
\end{prop}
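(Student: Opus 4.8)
The plan is to combine the two scalar Euler--Lagrange equations \eqref{ELbiconfCSUF1} and \eqref{ELbiconfCSUF2}, which both hold because a biconformal variation is simultaneously ${\cal D}$-conformal and $\widetilde{\cal D}$-conformal (so criticality splits into the two conformal components), into a single pointwise identity for $f$, and then recognize that identity as fiberwise harmonicity of $e^{\lambda f}$. First I would specialize to totally geodesic fibers by setting $H=0$: every term carrying $g(H,H)$ or $\Div H$ in \eqref{ELbiconfCSUF1}--\eqref{ELbiconfCSUF2} and \eqref{E-star2} then vanishes, and since the biconformal variations here are not assumed to preserve the volume of $\Omega$, the mean-value terms $\Sm^*_{\rm mix}(\Omega,g)$ and $\widetilde{\Sm}^*_{\rm mix}(\Omega,g)$ are zero. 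Abbreviating $a=\widetilde\Delta f$, $b=g(\nabla^\top f,\nabla^\top f)$ and $c=\<\tilde T,\tilde T\>$, the two equations reduce to
\[
 -2p(p-1)\,a-p^2(p-1)\,b+(p-2)\,c=0,\qquad p(p-1)(n-2)\,b+n\,c=0 .
\]

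The key algebraic step is to eliminate the non-integrability term $c=\<\tilde T,\tilde T\>$, which is not controlled pointwise by $f$. Multiplying the first equation by $n$, the second by $(p-2)$, and subtracting cancels $c$ and leaves
\[
 -2pn(p-1)\,a-p(p-1)\big[\,pn+(p-2)(n-2)\,\big]\,b=0 .
\]
Here I would invoke the elementary identity $pn+(p-2)(n-2)=2(p-1)(n-1)+2=2n\lambda$, which both exhibits $\lambda=\frac1{2n}\big(pn+(p-2)(n-2)\big)=\frac{(p-1)(n-1)+1}{n}$ as strictly positive for $p>1$ and collapses the displayed relation to $a+\lambda b=0$, that is,
\[
 \widetilde\Delta f+\lambda\,g(\nabla^\top f,\nabla^\top f)=0 .
\]

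It remains to translate this into harmonicity of $e^{\lambda f}$ along the fibers. Using $\widetilde\Delta u=\widetilde{\Div}(\widetilde\nabla u)$ together with the Leibniz rule $\widetilde{\Div}(u\,\widetilde\nabla f)=g(\widetilde\nabla u,\widetilde\nabla f)+u\,\widetilde\Delta f$ (immediate from the definition of $\widetilde{\Div}$) and the fact that $\nabla^\top f=\widetilde\nabla f$, a one-line computation gives
\[
 \widetilde\Delta\big(e^{\lambda f}\big)
 =\lambda\,e^{\lambda f}\big(\widetilde\Delta f+\lambda\,g(\nabla^\top f,\nabla^\top f)\big).
\]
The bracket is precisely $a+\lambda b$, which vanishes by the previous step, so $\widetilde\Delta(e^{\lambda f})=0$ and $e^{\lambda f}$ is fiberwise harmonic.

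I expect the main difficulty to be the correct handling of the $\Sm^*_{\rm mix}$-type mean-value terms: one must be sure that for the general (not volume-preserving) biconformal variations these constants are zero, since otherwise the elimination yields only $a+\lambda b=\const$ and $e^{\lambda f}$ would be a $\widetilde\Delta$-eigenfunction rather than harmonic. The hypothesis $g_{\,|\,\widetilde{\cal D}}>0$ serves to make $\widetilde\Delta$ a Laplacian of definite signature, so that ``fiberwise harmonic'' carries its usual meaning; beyond this, the argument is just the linear elimination above and the factorization $pn+(p-2)(n-2)=2n\lambda$.
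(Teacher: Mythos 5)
Your proof is correct and, at its algebraic core, identical to the paper's: the linear combination you use to eliminate $\<\tilde T,\tilde T\>$ is exactly the one the paper takes (namely $-\tfrac12$ times \eqref{ELbiconfCSUF1} plus $\tfrac{p-2}{2n}$ times \eqref{ELbiconfCSUF2}), and the identity $\widetilde\Delta\,e^{\lambda f}=\lambda\,e^{\lambda f}\big(\widetilde\Delta f+\lambda\,g(\nabla^\top f,\nabla^\top f)\big)$ is the same closing step; your factorization $pn+(p-2)(n-2)=2(p-1)(n-1)+2=2n\lambda$ checks out. The one point where the paper does more is precisely the one you flagged as the main difficulty: instead of discarding the mean-value terms at the outset, the paper keeps them and arrives at the fiberwise eigenvalue equation $\widetilde\Delta\,e^{\lambda f}=G\,\lambda\,e^{\lambda f}$ with $G=\frac{1}{p(p-1)}\big(\frac{p-2}{2}\,\widetilde{\Sm}\,^*_{\rm mix}-\frac{p}{2}\,\Sm\,^*_{\rm mix}\big)(\Omega,g)$, and then argues that $G=0$ because $e^{\lambda f}$ is a \emph{positive} eigenfunction of the fiberwise Laplacian (this is where the hypothesis $g_{\,|\,\widetilde{\cal D}}>0$ is actually used; on a closed fiber one simply integrates to get $G\lambda\int e^{\lambda f}=0$). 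Only afterwards does the paper remark that for variations not preserving the volume of $\Omega$ one has $G=0$ trivially, which is the case you treated. So your argument is complete for non-volume-preserving biconformal variations; to cover the volume-preserving case as the paper does, append the positive-eigenfunction step to kill the constant $G$.
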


\begin{proof}
From \eqref{ELbiconfCSUF1} and \eqref{ELbiconfCSUF2} we obtain
\[
 p(p-1) \big( \widetilde{\Delta} f +
 \lambda\,g( \nabla^{\top} f, \nabla^{\top} f ) \big)
 =  \frac{p-2}{2}\,\widetilde{\Sm}\,^*_{\rm mix}(\Omega,g) - \frac{p}{2}\,\Sm^*_{\rm mix}(\Omega,g).
\]
Using the identity
 $ \widetilde{\Delta} f + \lambda\,g( \nabla^{\top} f, \nabla^{\top} f )
 =\frac{1}{\lambda}\,e^{-\lambda f} \widetilde{\Delta}\,e^{\lambda f}$ in the above yields
\begin{equation} \label{CSGF}
 \widetilde{\Delta}\,e^{\,\lambda f}
 = G\,\lambda\,e^{\,\lambda f},
\end{equation}
where
$G=\frac1{p\,(p-1)}\,\big( \frac{p-2}{2}\,\widetilde{\Sm}\,^*_{\rm mix} -\frac{p}{2}\,\Sm\,^*_{\rm mix} \big)(\Omega,g)$.
Equation \eqref{CSGF} is
an eigenvalue problem (with positive solution) on every fiber; hence, $G=0$ and $e^{\lambda f}$ is fiber-wise harmonic.
For closed fibers, \eqref{CSGF} admits only fiber-wise constant solutions $f$.
If we allow our variations not to preserve the volume of $\Omega$, then again $G=0$ and \eqref{CSGF} becomes the fiberwise Laplace equation for $e^{\lambda f}$.
\end{proof}

\begin{rem}\rm
The set of bounded (or positive) harmonic functions on open manifolds with nonnegative
curvature was described in \cite{LT}: in particular,
every unique -- with respect to multiplication by a
constant -- positive harmonic function corresponds to a 'large end' of the manifold (in our case, the fiber of the submersion).
\end{rem}



\end{document}